\theoremstyle{plain}
\newtheorem{lemma}{Lemma}[section]
\newtheorem*{theorem*}{Theorem}
\newtheorem*{lemma*}{Lemma}
\newtheorem*{proposition*}{Proposition}
\newtheorem*{conjecture*}{Conjecture}
\newtheorem*{corollary*}{Corollary}
\newtheorem*{problem*}{Problem}
\newtheorem{theorem}[lemma]{Theorem}
\newtheorem{theorem-definition}[lemma]{Theorem and Definition}
\newtheorem{conjecture}[lemma]{Conjecture}
\newtheorem{corollary}[lemma]{Corollary}
\newtheorem{proposition}[lemma]{Proposition}
\newtheorem{problem}[lemma]{Problem}
\newtheorem{question}[lemma]{Question}
\theoremstyle{definition}
\newtheorem{definition}[lemma]{Definition}
\newtheorem{example}[lemma]{Example}
\newtheorem{remark}[lemma]{Remark}
\newtheorem{exercise}[lemma]{Exercise}
\newcommand{\F}[1]{\mathscr{#1}}
\newcommand{\fto}[1]{\stackrel{#1}{\to}}
\newcommand{\OV}[1]{\overline{#1}}
\newcommand{\Z}{\mathbb{Z}}
\renewcommand{\F}{\mathbb{F}}
\newcommand{\C}{\mathbb{C}}
\newcommand{\Q}{\mathbb{Q}}
\newcommand{\R}{\mathbb{R}}
\newcommand{\OO}{\mathcal{O}}
\newcommand{\te}{\otimes}
\newcommand{\cI}{\mathcal{I}}
\newcommand{\cF}{\mathcal F}
\newcommand{\cC}{\mathcal C}
\newcommand{\cA}{\mathcal A}
\newcommand{\cM}{\mathcal M}
\newcommand{\alg}{\mathrm{alg}}
\newcommand{\cU}{\mathcal U}
\newcommand{\cP}{\mathcal P}
\newcommand{\cT}{\mathcal{T}}
\newcommand{\cE}{\mathcal{E}}
\newcommand{\rH}{\mbox{H}}
\newcommand{\gr}{\mathrm{gr}}
\newcommand{\cZ}{\mathcal{Z}}
\newcommand{\HH}{\mathbb{H}}
\newcommand{\leqpar}{\underset{{\scriptscriptstyle (}-{\scriptscriptstyle )}}{<}}
\newcommand{\geqpar}{\underset{{\scriptscriptstyle (}-{\scriptscriptstyle )}}{>}}
\let\tilde\widetilde
\let\hat\widehat
\renewcommand{\P}{\mathbb{P}}
\DeclareMathOperator{\Bl}{Bl}
\DeclareMathOperator{\BBs}{\mathbb{B}s}
\DeclareMathOperator{\ch}{ch}
\DeclareMathOperator{\Gr}{Gr}
\newcommand{\tr}{\mathrm{tr}}
\DeclareMathOperator{\Bs}{Bs}
\DeclareMathOperator{\Hom}{Hom}
\DeclareMathOperator{\Pic}{Pic}
\DeclareMathOperator{\cone}{cone}
\DeclareMathOperator{\Sym}{Sym}
\DeclareMathOperator{\Spec}{Spec}
\DeclareMathOperator{\Bigc}{Big}
\DeclareMathOperator{\NS}{NS}
\DeclareMathOperator{\im}{Im}
\DeclareMathOperator{\td}{td}
\newcommand{\Sch}{\mathrm{Sch}}
\DeclareMathOperator{\Set}{Set}
\DeclareMathOperator{\rk}{rk}
\DeclareMathOperator{\Mor}{Mor}
\DeclareMathOperator{\Ext}{Ext}
\DeclareMathOperator{\ext}{ext}
\DeclareMathOperator{\id}{id}
\DeclareMathOperator{\sHom}{\mathcal{H} \textit{om}}
\DeclareMathOperator{\sEnd}{\mathcal{E} \textit{nd}}
\DeclareMathOperator{\coh}{coh}
\DeclareMathOperator{\Stab}{Stab}
\DeclareMathOperator{\Amp}{Amp}
\DeclareMathOperator{\Eff}{Eff}
\DeclareMathOperator{\Nef}{Nef}
\DeclareMathOperator{\num}{num}
\begin{document}

\date{\today}
\author[J. Huizenga]{Jack Huizenga}
\address{Department of Mathematics, The Pennsylvania State University, University Park, PA 16802}
\email{huizenga@psu.edu}
\subjclass[2010]{Primary: 14J60. Secondary: 14E30, 14J29, 14C05}
\keywords{Moduli spaces of sheaves, Hilbert schemes of points, ample cone, Bridgeland stability}
\thanks{During the preparation of this article the author was partially supported by a National Science Foundation Mathematical Sciences Postdoctoral Research Fellowship}

\title{Birational geometry of moduli spaces of sheaves and Bridgeland stability}

\begin{abstract}
Moduli spaces of sheaves and Hilbert schemes of points have experienced a recent resurgence in interest in the past several years, due largely to new techniques arising from Bridgeland stability conditions and derived category methods.  In particular, classical questions about the birational geometry of these spaces can be answered by using new tools such as the positivity lemma of Bayer and Macr\`i.  In this article we first survey classical results on moduli spaces of sheaves and their birational geometry.  We then discuss the relationship between these classical results and the new techniques coming from Bridgeland stability, and discuss how cones of ample divisors on these spaces can be computed with these new methods.  This survey expands upon the author's talk at the 2015 Bootcamp in Algebraic Geometry preceding the 2015 AMS Summer Research Institute on Algebraic Geometry at the University of Utah.
\end{abstract}
\maketitle

\setcounter{tocdepth}{1}
\tableofcontents

\section{Introduction}

The topic of vector bundles in algebraic geometry is a broad field with a rich history.  In the 70's and 80's, one of the main questions of interest was the study of low rank vector bundles on projective spaces $\P^r$.  One particularly challenging conjecture in this subject is the following.  \begin{conjecture}[Hartshorne \cite{Hartshorne}]If $r\geq 7$ then any rank $2$ bundle on $\P^r_\C$ splits as a direct sum of line bundles.   
\end{conjecture}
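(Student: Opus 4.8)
\medskip

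\noindent\emph{Outline of approaches.} The conjecture has of course been open for decades, so what follows is an outline of the two classical lines of attack and of the essential difficulty, not an actual proof. By Horrocks' criterion a vector bundle $E$ on $\P^r_\C$ splits if and only if all its intermediate cohomology vanishes, $H^i(\P^r, E(t)) = 0$ for all $t \in \Z$ and $0 < i < r$; and for a rank $2$ bundle the isomorphism $E^\vee \cong E(-c_1)$ together with Serre duality pairs $H^i_*(E)$ with $H^{r-i}_*(E)$, so the obstruction to splitting is a finite collection of self-dual finite-length graded modules $H^i_*(E) = \bigoplus_t H^i(\P^r, E(t))$, $0 < i \le r/2$. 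The first plan of attack goes through the Serre correspondence. After replacing $E$ by a globally generated twist, a general section either is nowhere vanishing --- whence $E$ splits for trivial reasons --- or vanishes along a smooth codimension-$2$ subvariety $Y \subset \P^r$, which is then \emph{subcanonical}: $\omega_Y \cong \cO_Y(c_1 - r - 1)$. Conversely, the common zero locus of two general sections of $\cO(a)\oplus\cO(b)$ is the complete intersection of two hypersurfaces, and because the relevant $\Ext^1$ group is one-dimensional the Serre construction forces $E \cong \cO(a)\oplus\cO(b)$ whenever $Y$ arises in this way. Thus for $r \ge 7$ the conjecture would follow from --- and for $r \ge 6$ is in fact equivalent to, since Barth's theorem makes every smooth codimension-$2$ subvariety of $\P^r$ subcanonical --- the assertion that every such $Y$ is a complete intersection, the codimension-$2$ case of Hartshorne's conjecture that a smooth subvariety of codimension $c < r/3$ is always a complete intersection. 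Here is the crux: the Barth--Lefschetz theorems give $\Pic Y = \Z$ and pin down much of the topology of $Y$, and there are deep partial results toward the complete intersection conjecture, but passing from ``$Y$ has the numerical invariants of a complete intersection'' to ``$Y$ is a complete intersection'' is exactly the point at which the argument stalls.

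The second, more computational, plan works directly with the intermediate cohomology modules $H^i_*(E)$ over the homogeneous coordinate ring $S = \C[x_0,\dots,x_r]$, aiming to show they vanish once $\rk E = 2$ and $r \ge 7$. The idea is to exploit all the available structure simultaneously: the self-duality inherited from $E^\vee \cong E(-c_1)$; the bounds on Castelnuovo--Mumford regularity and on the minimal free resolution of these modules coming from $E$ having only two nonzero Chern classes, equivalently from the shape of its Horrocks/Beilinson monad; and the hyperplane restriction sequences $0 \to E(t-1) \to E(t) \to E|_H(t) \to 0$ with $H \cong \P^{r-1}$, which, combined with Barth's restriction theorem and induction on $r$, constrain the $S$-module structure. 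The obstacle is that no known combination of these constraints is decisive: one can write down finite-length self-dual graded $S$-modules exhibiting all the expected numerical features, and there is at present no mechanism that rules such a module out as $H^1_*$ of a rank $2$ bundle once $r \ge 7$. Candidly, the hard part is all of it.

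It is worth recording the evidence. Non-split rank $2$ bundles abound on $\P^2$, occur on $\P^3$ (null-correlation and instanton bundles) and on $\P^4$ (the Horrocks--Mumford bundle), yet not a single example is known on any $\P^r$ with $r \ge 5$; and the classical results of Barth--Van de Ven, Tyurin and Sato on uniform bundles, together with numerous Chern-class restrictions, all point toward splitting. A proof for $r \ge 7$ would, I expect, demand a genuinely new ingredient, and one natural place to look for it is the circle of derived-category and Bridgeland-stability techniques surveyed in the remainder of this article: they afford unprecedented control over moduli of sheaves and their wall-crossing behavior, and it is at least conceivable that a sufficiently refined stability analysis on $\P^r$ could ultimately force the intermediate cohomology of a rank $2$ bundle to vanish.
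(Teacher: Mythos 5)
This statement is Hartshorne's conjecture, which the paper presents precisely as an open conjecture and does not prove; you correctly decline to prove it as well, so there is no proof to compare. Your survey of the standard lines of attack is accurate as far as I can check: Horrocks' splitting criterion, the self-duality $E^\vee \cong E(-c_1)$ for rank $2$, the Serre correspondence with subcanonical codimension-$2$ subvarieties, the equivalence (via Barth--Larsen) with the codimension-$2$ case of the complete-intersection conjecture in the stated range, and the catalogue of known indecomposable examples stopping at the Horrocks--Mumford bundle on $\P^4$ are all correctly stated, and you are right that the paper cites the conjecture only as motivation for the study of moduli of sheaves rather than as something it addresses.
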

The Hartshorne conjecture is connected to the study of subvarieties of projective space of small codimension.  In particular, the above statement implies that if $X\subset \P^r$ is a codimension $2$ smooth subvariety and $K_X$ is a multiple of the hyperplane class then $X$ is a complete intersection.  Thus, early intersect in the study of vector bundles was born out of classical questions in projective geometry.

Study of these types of questions led naturally to the study of \emph{moduli spaces} of (semistable) vector bundles, parameterizing the isomorphism classes of (semistable) vector bundles with given numerical invariants on a projective variety $X$ (we will define \emph{semistable} later---for now, view it as a necessary condition to get a good moduli space).  As often happens in mathematics, these spaces have become interesting in their own right, and their study has become an entire industry.  Beginning in the 80's and 90's, and continuing to today, people have studied the basic questions of the geometry of these spaces.  Are they smooth? Irreducible?  What do their singularities look like?  When is the moduli space nonempty?  What are divisors on the moduli space?  Especially when $X$ is a curve or surface, satisfactory answers to these questions can often be given.  We will survey several foundational results of this type in \S\ref{sec-moduli}-\ref{sec-properties}.

More recently, there has been a great deal of interest in the study of the \emph{birational geometry} of moduli spaces of various geometric objects.  Loosely speaking, the goal of such a program is to understand alternate birational models, or \emph{compactifications}, of a moduli space as themselves being moduli spaces for slightly different geometric objects.  For instance, the Hassett-Keel program \cite{HassettHyeon} studies alternate compactifications of the Deligne-Mumford compactification $\overline M_g$ of the moduli space of stable curves.  Different compactifications can be obtained by studying (potentially unstable) curves with different types of singularities.  In addition to being interesting in their own right, moduli spaces provide explicit examples of higher dimensional varieties which can frequently be understood in great detail.  We survey the birational geometry of moduli spaces of sheaves from a classical viewpoint in \S\ref{sec-classical}.

In the last several years, there has been a great deal of progress in the study of the birational geometry of moduli spaces of sheaves owing to Bridgeland's introduction of the concept of a \emph{stability condition} \cite{bridgeland:stable,bridgelandK3}.  Very roughly, there is a complex manifold $\Stab(X)$, the \emph{stability manifold}, parameterizing stability conditions $\sigma$ on $X$.  There is a moduli space corresponding to each condition $\sigma$, and the stability manifold decomposes into chambers  where the corresponding moduli space does not change as $\sigma$ varies in the chamber.  For one of these chambers, the \emph{Gieseker chamber}, the corresponding moduli space is the ordinary moduli space of semistable sheaves.  The moduli spaces corresponding to other chambers often happen to be the alternate birational models of the ordinary moduli space.  In this way, the birational geometry of a moduli space of sheaves can be viewed in terms of a variation of the moduli problem.  In \S\ref{sec-Bridgeland} we will introduce Bridgeland stability conditions, and especially study stability conditions on a surface.  We study some basic examples on $\P^2$ in \S\ref{sec-exP2}.  Finally, we close the paper in \S\ref{sec-positivity} by surveying some recent results on the computation of ample cones of Hilbert schemes of points and moduli spaces of sheaves on surfaces.

\subsection*{Acknowledgements}  I would especially like to thank Izzet Coskun and Benjamin Schmidt for many discussions on Bridgeland stability and related topics.  In addition, I would like to thank the referee of this article for many valuable comments, as well as Barbara Bolognese, Yinbang Lin, Eric Riedl, Matthew Woolf, and Xialoei Zhao.  Finally, I would like to thank the organizers of the 2015 Bootcamp in Algebraic Geometry and the 2015 AMS Summer Research Institute on Algebraic Geometry, as well as the funding organizations for these wonderful events.

\section{Moduli spaces of sheaves}\label{sec-moduli}

The definition of a Bridgeland stability condition is motivated by the classical theory of semistable sheaves.  In this section we review the basics of the theory of moduli spaces of sheaves, particularly focusing on the case of a surface.  The standard references for this material are Huybrechts-Lehn \cite{HuybrechtsLehn} and Le Potier \cite{LePotierLectures}.

\subsection{The moduli property}  First we state an idealized version of the moduli problem.  Let $X$ be a smooth projective variety with polarization $H$, and fix a set of discrete numerical invariants of a coherent sheaf $E$ on $X$.  This can be accomplished by fixing the \emph{Hilbert polynomial} $$P_E(m) = \chi(E\te \OO_X(mH))$$ of the sheaf.  

A \emph{family of sheaves on $X$ over $S$} is a (coherent) sheaf $\cE$ on $X\times S$ which is $S$-flat.  For a point $s\in S$, we write $E_s$ for the sheaf $\cE|_{X\times \{s\}}$.   We say $\cE$ is a family of \emph{semistable sheaves of Hilbert polynomial $P$} if $E_s$ is semistable with Hilbert polynomial $P$ for each $s\in S$ (see \S\ref{ssec-semistable} for the definition of semistability).  We define a \emph{moduli functor} $$\cM'(P):\Sch^{o}\to \Set$$ by defining $\cM'(P)(S)$ to be the set of isomorphism classes of families of semistable sheaves on $X$ with Hilbert polynomial $P$.  We will sometimes just write $\cM'$ for the moduli functor when the polynomial $P$ is understood.

Let $p:X\times S\to S$ be the projection.  If $\cE$ is a family of semistable sheaves on $X$ with Hilbert polynomial $P$ and $L$ is a line bundle on $S$, then $\cE\te p^*L$ is again such a family.  The sheaves $E_s$ and $(\cE\te p^*L)|_{X\times \{s\}}$ parameterized by any point $s\in S$ are isomorphic, although $\cE$ and $\cE\te p^*L$ need not be isomorphic.  We call two  families of sheaves on $X$ \emph{equivalent} if they differ by tensoring by a line bundle pulled back from the base, and define a refined moduli functor $\cM$ by modding out by this equivalence relation: $\cM= \cM'/\sim$.

The basic question is whether or not $\cM$ can be represented by some nice object, e.g. by a projective variety or a scheme.  We recall the following definitions.

\begin{definition}
A functor $\cF:\Sch^o\to \Set$ is \emph{represented} by a scheme $X$ if there is an isomorphism of functors $\cF \cong \Mor_{\Sch}(-,X)$.

A functor $\cF: \Sch^o\to \Set$ is \emph{corepresented} by a scheme $X$ if there is a natural transformation $\alpha:\cF\to \Mor_{\Sch}(-,X)$ with the following universal property:  if $X'$ is a scheme and $\beta:\cF\to \Mor_{\Sch}(-,X')$ a natural transformation, then there is a unique morphism $\pi: X\to X'$ such that $\beta$ is the composition of $\alpha$ with the transformation $\Mor_{\Sch}(-,X)\to \Mor_{\Sch}(-,X')$ induced by $\pi$.
\end{definition}

\begin{remark}
Note that if $\cF$ is represented by $X$ then it is also corepresented by $X$.  

If $\cF$ is represented by $X$, then $\cF(\Spec\C) \cong \Mor_{\Sch}(\Spec \C,X)$.  That is, the points of $X$ are in bijective correspondence with $\cF(\Spec \C)$.  This need not be true if $\cF$ is only corepresented by $X$.

If $\cF$ is corepresented by $X$, then $X$ is unique up to a unique isomorphism.
\end{remark}

We now come to the basic definition of moduli space of sheaves.

\begin{definition}
A scheme $M(P)$ is a \emph{moduli space of semistable sheaves with Hilbert polynomial $P$} if $M(P)$ corepresents $\cM(P)$.  It is a \emph{fine moduli space} if it represents $\cM(P)$.
\end{definition}

The most immediate consequence of $M$ being a moduli space is the existence of the \emph{moduli map}.  Suppose $E$ is a family of semistable sheaves on $X$ parameterized by $S$.  Then we obtain a morphism $S\to M$ which intuitively sends $s\in S$ to the isomorphism class of the sheaf $E_s$.  

In the special case when the base $\{s\}$ is a point, a family over $\{s\}$ is the isomorphism class of a single sheaf, and the moduli map $\{s\}\to M$ sends that class to a corresponding point.  The compatibilities in the definition of a natural transformation ensure that in the case of a family $\cE$ parameterized by a base $S$ the image in $M$ of a point $s\in S$ depends only on the isomorphism class of the sheaf $E_s$ parameterized by $s$.  

In the ideal case where the moduli functor $\cM$ has a fine moduli space, there is a universal sheaf $\cU$ on $X$ parameterized by $M$.  We have an isomorphism $$\cM(M)\cong \Mor_\Sch(M,M)$$ and the distinguished identity morphism $M\to M$ corresponds to a family $\cU$ of sheaves parameterized by $M$ (strictly speaking, $\cU$ is only well-defined up to tensoring by a line bundle pulled back from $M$).  This universal sheaf has the property that if $\cE$ is a family of semistable sheaves on $X$ parameterized by $S$ and $f:S\to M$ is the moduli map, then $\cE$ and $({\mathrm{id}}_X\times f)^*\cU$ are equivalent. 

\subsection{Issues with a naive moduli functor}  In this subsection we give some examples to illustrate the importance of the as-yet-undefined \emph{semistability} hypothesis in the definition of the moduli functor.  Let $\cM^n$ be the \emph{naive} moduli functor of (flat) families of coherent sheaves with Hilbert polynomial $P$ on $X$, omitting any semistability hypothesis.  We might hope that this functor is (co)representable by a scheme $M^n$ with some nice properties, such as the following.
\begin{enumerate}
\item $M^n$ is a scheme of finite type.
\item The points of $M^n$ are in bijective correspondence with isomorphism classes of coherent sheaves on $X$ with Hilbert polynomial $P$.
\item A family of sheaves over a smooth punctured curve $C-\{\mathrm{pt}\}$ can be uniquely completed to a family of sheaves over $C$.
\end{enumerate}

However, unless some restrictions are imposed on the types of sheaves which are allowed, all three hopes will fail.  Properties (2) and (3) will also typically fail for semistable sheaves, but this failure occurs in a well-controlled way. 

\begin{example}
Consider $X=\P^1$, and let $P=P_{\OO_{\P^1}^{\oplus 2}} = 2m+2$ be the Hilbert polynomial of the rank $2$ trivial bundle.  Then for any $n\geq 0$, the bundle $$\OO_{\P^1}(n)\oplus \OO_{\P^1}(-n)$$ also has Hilbert polynomial $2m+2$, and $h^0(\OO_{\P^1}(n)\oplus \OO_{\P^1}(-n))=n+1$.  If there is a moduli scheme $M^n$ parameterizing all sheaves on $\P^1$ of Hilbert polynomial $P$, then $M^n$ cannot be of finite type.  Indeed, the loci $$W_n = \{E : h^0(E)\geq n\} \subset M(P)$$ would then form an infinite decreasing chain of closed subschemes of $M(P)$.
\end{example}

\begin{example}\label{ex-Sequiv1}
Again consider $X=\P^1$ and $P=2m+2$.  Let $S = \Ext^1(\OO_{\P^1}(1),\OO_{\P^1}(-1))=\C$.  For $s\in S$, let $E_s$ be the sheaf $$0\to \OO_{\P^1}(-1)\to E_s \to \OO_{\P^1}(1)\to 0$$ defined by the extension class $s$.  One checks that if $s\neq 0$ then $E_s \cong \OO_{\P^1}^{\oplus 2}$, but the extension is split for $s=0$. It follows that the moduli map $S\to M^n$ must be constant, so $\OO_{\P^1}\oplus \OO_{\P^1}$ and $\OO_{\P^1}(1)\oplus \OO_{\P^1}(-1)$ are identified in the moduli space $M^n$.
\end{example}

\begin{example}\label{ex-Sequiv}
Suppose $X$ is a smooth variety and $F$ is a coherent sheaf with $\dim\Ext^1(F,F)\geq 1$.  Let $S\subset \Ext^1(F,F)$ be a $1$-dimensional subspace, and for any $s\in S$ let $E_s$ be the corresponding extension of $F$ by $F$.  Then if $s,s'\in S$ are both not zero, we have $$E_s\cong E_{s'} \not\cong E_0 = F\oplus F.$$  As in the previous example, we see that $F\oplus F$ and a nontrivial extension of $F$ by $F$ must be identified in $M^n$.  Therefore any two extensions of $F$ by $F$ must also be identified in $M^n$.
\end{example}

If $F$ is semistable, then Example  \ref{ex-Sequiv} is an example of a nontrivial family of \emph{$S$-equivalent} sheaves.  A major theme of this survey is that $S$-equivalence is the main source of interesting birational maps between moduli spaces of sheaves.

\subsection{Semistability}\label{ssec-semistable} Let $E$ be a coherent sheaf on $X$.  We say that $E$ is \emph{pure} of dimension $d$ if the support of $E$ is $d$-dimensional and every nonzero subsheaf of $E$ has $d$-dimensional support.  

\begin{remark} If $\dim X=n$, then  $E$ is pure of dimension $n$ if and only if $E$ is torsion-free.  \end{remark}

If $E$ is pure of dimension $d$ then the Hilbert polynomial $P_E(m)$ has degree $d$.  We write it in the form $$P_E(m)=\alpha_d(E) \frac{m^d}{d!}+\cdots,$$ and define the \emph{reduced Hilbert polynomial} by $$p_E(m) = \frac{P_E(m)}{\alpha_d(E)}.$$ In the principal case of interest where $d=n=\dim X$, Riemann-Roch gives $\alpha_n(E) = r(E)H^n$ where $r(E)$ is the rank, and $$p_E(m) = \frac{P_E(m)}{r(E)H^n}.$$ 

\begin{definition}
A sheaf $E$ is \emph{(semi)stable} if it is pure of dimension $d$ and any proper subsheaf $F\subset E$ has $$p_F \leqpar p_E,$$ where polynomials are compared at large values.  That is, $p_F<p_E$ means that $p_F(m)<p_E(m)$ for all $m\gg 0$.
\end{definition}

The above notion of stability is often called Gieseker stability, especially when a distinction from other forms of stability is needed.  The foundational result in this theory is that Gieseker semistability is the correct extra condition on sheaves to give well-behaved moduli spaces.

\begin{theorem}[{\cite[Theorem 4.3.4]{HuybrechtsLehn}}]
Let $(X,H)$ be a smooth, polarized projective variety, and fix a Hilbert polynomial $P$.  There is a projective moduli scheme of semistable sheaves on $X$ of Hilbert polynomial $P$.
\end{theorem}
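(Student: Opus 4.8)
The plan is to realize $M(P)$ as a GIT quotient of a locally closed subscheme of a Quot scheme, following the construction of Gieseker, Maruyama and Simpson as presented in \cite{HuybrechtsLehn}. The starting point is \emph{boundedness}: the family of all semistable sheaves on $X$ with Hilbert polynomial $P$ is bounded. This rests on the Le Potier--Simpson estimates bounding $h^0$ of a semistable sheaf in terms of its Hilbert polynomial (a sharpening of the Grothendieck--Kleiman boundedness criterion), from which one extracts an integer $m_0$ such that every semistable sheaf $E$ with Hilbert polynomial $P$ is $m$-regular (in the sense of Castelnuovo--Mumford) for all $m\geq m_0$. Fix one such $m$ and put $N=P(m)$. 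For every semistable $E$ the twist $E(m)$ is then globally generated with vanishing higher cohomology, so $h^0(E(m))=N$, and a choice of basis of $H^0(E(m))$ yields a surjection $q\colon \OO_X(-m)^{\oplus N}\twoheadrightarrow E$.

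Such quotients are parameterized by the projective scheme $\cQ=\mathrm{Quot}(\OO_X(-m)^{\oplus N},P)$. Let $R\subset \cQ$ be the subset of points $[q\colon \OO_X(-m)^{\oplus N}\to E]$ for which $E$ is semistable and the induced map $H^0(q(m))$ on global sections is an isomorphism; this is a locally closed subscheme, invariant under the natural $\GL_N$-action on $\cQ$, and since scalar matrices act trivially the relevant group is $\SL_N$. An $\SL_N$-equivariant embedding of $\cQ$ into a Grassmannian of quotients of $H^0(\OO_X(\ell-m))^{\oplus N}$ for $\ell\gg m$, and thence into a projective space, produces a canonical $\SL_N$-linearized very ample line bundle on $\cQ$, and the moduli space is defined as the GIT quotient $M(P)=R/\!\!/\SL_N$ with respect to this linearization.

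The technical heart of the argument, and the step I expect to be the main obstacle, is the comparison of stability notions: for $\ell\gg m\gg 0$, a point $[q]\in\cQ$ with $E$ pure of dimension $d=\deg P$ is GIT-semistable (resp.\ GIT-stable) for $\SL_N$ exactly when $E$ is Gieseker-semistable (resp.\ stable) and $H^0(q(m))$ is an isomorphism. One proves this through the Hilbert--Mumford numerical criterion: a one-parameter subgroup of $\SL_N$ determines a weighted filtration of $\C^N$, hence through $q$ a filtration of $E$ by subsheaves, and the Hilbert--Mumford weight, computed from the Hilbert polynomials of the graded pieces, is matched term by term against the Gieseker inequality $p_F\leqpar p_E$ for saturated subsheaves $F\subset E$; the Le Potier--Simpson bounds reappear to control destabilizing subsheaves that fail to be $m$-regular or saturated. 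Granting this comparison, the $\SL_N$-semistable locus of $\cQ$ coincides with $R$, so general GIT furnishes a good quotient, and since $\cQ$ is projective and $\SL_N$ reductive the quotient $M(P)$ is projective. (Alternatively, properness can be checked directly via Langton's semistable-reduction theorem.)

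It remains to check that $M(P)$ corepresents $\cM(P)$. Given a family $\cE$ of semistable sheaves over a base $S$, locally on $S$ one twists by a pulled-back line bundle and chooses a trivialization of $p_*\cE(m)$ to obtain a classifying map $S\to R$; composing with $R\to M(P)$ gives a morphism $S\to M(P)$ independent of these local choices, and such morphisms glue to a natural transformation $\cM(P)\to\Mor_{\Sch}(-,M(P))$. The universal property of a GIT good quotient then shows this transformation is universal among natural transformations to schemes, so $M(P)$ corepresents $\cM(P)$. The closed points of $M(P)$ are exactly the $S$-equivalence classes of semistable sheaves --- equivalently, the polystable sheaves --- which is the identification already forced by Example~\ref{ex-Sequiv}; and $M(P)$ is a fine moduli space only when there are no strictly semistable sheaves and no obstruction to descending the universal quotient from $R$ to $M(P)$.
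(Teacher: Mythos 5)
The paper offers no proof of this theorem, only a citation to \cite[Theorem 4.3.4]{HuybrechtsLehn}, and your outline is an accurate summary of exactly that reference's Gieseker--Maruyama--Simpson construction: boundedness via the Le Potier--Simpson estimates, the Quot-scheme parameter space with its $\SL_N$-linearization, the Hilbert--Mumford comparison of GIT and Gieseker (semi)stability, and corepresentability from the universal property of the good quotient. Your proposal is correct and takes essentially the same approach as the paper's (cited) proof.
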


While the definition of Gieseker stability is compact, it is frequently useful to use the Riemann-Roch theorem to make it more explicit.  We spell  this out in the case of a curve or surface.  We define the \emph{slope} of a coherent sheaf $E$ of positive rank on an $n$-dimensional variety by $$\mu(E) = \frac{c_1(E).H^{n-1}}{r(E)H^n}.$$ 
\begin{example}[Stability on a curve]
Suppose $C$ is a smooth curve of genus $g$.  The Riemann-Roch theorem asserts that if $E$ is a coherent sheaf on $C$ then $$\chi(E) = c_1(E)+ r(E)(1-g).$$ Polarizing $C$ with $H=p$ a point, we find $$P_E(m)=\chi(E(m)) = c_1(E(m))+r(E)(1-g)=r(E)m +(c_1(E)+r(E)(1-g)),$$ and so $$p_E(m) = m+\frac{c_1(E)}{r(E)}+(1-g).$$ We conclude that if $F\subset E$ then $p_F \leqpar p_E$ if and only if $\mu(F)\leqpar \mu(E)$.
\end{example}

\begin{example}[Stability on a surface]\label{ex-stabilitySurface}
Let $X$ be a smooth surface with polarization $H$, and let $E$ be a sheaf of positive rank.  We define the \emph{total slope} and \emph{discriminant} by $$\nu(E) = \frac{c_1(E)}{r(E)}\in H^2(X,\Q) \qquad \textrm{and} \qquad \Delta(E) = \frac{1}{2}\nu(E)^2-\frac{\ch_2(E)}{r}\in \Q.$$ With this notation, the Riemann-Roch theorem takes the particularly simple form $$\chi(E) = r(E)(P(\nu(E))-\Delta(E)),$$ where $P(\nu)= \chi(\OO_X)+\frac{1}{2}\nu(\nu-K_X)$ (see \cite{LePotierLectures}).  The total slope and discriminant behave well with respect to tensor products: if $E$ and $F$ are locally free then \begin{align*}\nu(E\te F) &= \nu(E)+\nu(F)\\ \Delta(E\te F)&= \Delta(E) + \Delta(F).\end{align*} Furthermore, $\Delta(L) = 0$ for a line bundle $L$; equivalently, in the case of a line bundle the Riemann-Roch formula is $\chi(L) = P(c_1(L))$.  Then we compute
\begin{align*}
\chi(E(m)) &= r(E)(P(\nu(E)+mH)-\Delta(E))\\
&= r(E)(\chi(\OO_X)+\frac{1}{2}(\nu(E)+mH)(\nu(E)+mH-K_X)-\Delta(E))\\
&= r(E)(P(\nu(E))+\frac{1}{2}(mH)^2+mH.(\nu(E)+\frac{1}{2}K_X)-\Delta(E))\\
&= \frac{r(E)H^2}{2}m^2+r(E)H.(\nu(E)+\frac{1}{2}K_X)m + \chi(E),
\end{align*}
so $$p_E(m) = \frac{1}{2}m^2+\frac{H.(\nu(E)+\frac{1}{2}K_X)}{H^2}m+\frac{\chi(E)}{r(E)H^2}$$
Now if $F\subset E$, we compare the coefficients of $p_F$ and $p_E$ lexicographically to determine when $p_F \leqpar p_E$.  We see that $p_F \leqpar p_E$ if and only if either $\mu(F) < \mu(E)$, or $\mu(F)=\mu(E)$ and $$\frac{\chi(F)}{r(F)H^2}\leqpar \frac{\chi(E)}{r(E)H^2}.$$
\end{example}

\begin{example}[Slope stability]\label{ex-slopestability}
The notion of \emph{slope semistability} has also been studied extensively and frequently arises in the study of Gieseker stability.  We say that a torsion-free sheaf $E$ on a variety $X$ with polarization $H$ is \emph{$\mu$-(semi)stable} if every subsheaf $F\subset E$ of strictly smaller rank has $\mu(F)\leqpar \mu(E)$.  As we have seen in the curve and surface case, the coefficient of $m^{n-1}$ in the reduced Hilbert polynomial $p_E(m)$ is just $\mu(E)$ up to adding a constant depending only on $(X,H)$.  This observation gives the following chain of implications:
$$\mu\textrm{-stable} \Rightarrow \textrm{stable} \Rightarrow \textrm{semistable} \Rightarrow \mu\textrm{-semistable}.$$  While Gieseker (semi)stability gives the best moduli theory and is therefore the most common to work with, it is often necessary to consider these various other forms of stability to study ordinary stability.
\end{example}

\begin{example}[Elementary modifications]\label{ex-elementary}
As an example where $\mu$-stability is useful, suppose $X$ is a smooth surface and $E$ is a torsion-free sheaf on $X$.  Let $p\in X$ be a point where $X$ is locally free, and consider sheaves $E'$ defined as kernels of maps $E\to \OO_p$, where $\OO_p$ is a skyscraper sheaf:  $$0\to E'\to E\to \OO_p\to 0.$$  Intuitively, $E'$ is just $E$ with an additional simple singularity imposed at $p$.  Such a sheaf $E'$ is called an \emph{elementary modification} of $E$.  We have $\mu(E)=\mu(E')$ and $\chi(E') = \chi(E)-1$, which makes elementary modifications a useful tool for studying sheaves by induction on the Euler characteristic.

Suppose $E$ satisfies one of the four types of stability discussed in Example \ref{ex-slopestability}.  If $E$ is $\mu$-(semi)stable, then it follows that $E'$ is $\mu$-(semi)stable as well.  Indeed, if $F\subset E'$ with $r(F)<r(E')$, then also $F\subset E$, so $\mu(F)\leqpar\mu(E)$.  But $\mu(E)=\mu(E')$, so $\mu(F)\leqpar\mu(E')$ and $E'$ is $\mu$-(semi)stable.

On the other hand, elementary modifications do not behave as well with respect to Gieseker (semi)stability.  For example, take $X=\P^2$.  Then $E= \OO_{\P^2}\oplus \OO_{\P^2}$ is semistable, but any any elementary modification $E'$ of $\OO_{\P^2}\oplus\OO_{\P^2}$ at a point $p\in \P^2$ is isomorphic to $I_p\oplus \OO_{\P^2}$, where $I_p$ is the ideal sheaf of $p$.  Thus $E'$ is not semistable.

It is also possible to give an example of a stable sheaf $E$ such that some elementary modification is not stable.  Let $p,q,r\in \P^2$ be distinct points. Then $\ext^1(I_r,I_{\{p,q\}})=2$.  If $E$ is any non-split extension $$0\to I_{\{p,q\}}\to E\to I_r\to 0$$ then $E$ is clearly $\mu$-semistable.  In fact, $E$ is stable: the only stable sheaves $F$ of rank $1$ and slope $0$ with $p_F\leq p_E$ are $\OO_{\P^2}$ and $I_s$ for $s\in \P^2$ a point, but $\Hom(I_s,E)=0$ for any $s\in \P^2$ since the sequence is not split.  Now if $s\in \P^2$ is a point distinct from $p,q,r$ and $E\to \OO_s$ is a map such that the composition $I_{\{p,q\}}\to E\to \OO_s$ is zero, then the corresponding elementary modification $$0\to E'\to E\to \OO_s\to 0$$ has a subsheaf $I_{\{p,q\}}\subset E'$.  We have $p_{I_{\{p,q\}}} = p_{E'}$, so $E'$ is strictly semistable.
\end{example}

\begin{example}[Chern classes]\label{ex-Chern}
Let $K_0(X)$ be the Grothendieck group of $X$, generated by classes $[E]$ of locally free sheaves, modulo relations $[E] = [F]+[G]$ for every exact sequence $$0\to F\to E\to G\to 0.$$ There is a symmetric bilinear \emph{Euler pairing} on $K_0(X)$ such that $([E],[F]) = \chi(E\te F)$ whenever $E,F$ are locally free sheaves.  The \emph{numerical Grothendieck group} $K_{\num}(X)$ is the quotient of $K_0(X)$ by the kernel of the Euler pairing, so that the Euler pairing descends to a nondegenerate pairing on $K_{\num}(X)$.

It is often preferable to fix the Chern classes of a sheaf instead of the Hilbert polynomial.  This is accomplished by fixing a class ${\bf v}\in K_{\num}(X)$.
Any class ${\bf v}$ determines a Hilbert polynomial $P_{\bf v} = ({\bf v},[\OO_X(m)])$.  In general, a polynomial $P$ can arise as the Hilbert polynomial of several classes ${\bf v}\in K_{\num}(X)$.  In any family $\cE$ of sheaves parameterized by a connected base $S$ the sheaves $E_s$ all have the same class in $K_{\num}(X)$.  Therefore, the moduli space $M(P)$ splits into connected components corresponding to the different vectors ${\bf v}$ with $P_{\bf v}= P$.  We write $M({\bf v})$ for the connected component of $M(P)$ corresponding to ${\bf v}$.
\end{example}

\subsection{Filtrations} In addition to controlling subsheaves, stability also restricts the types of maps that can occur between sheaves.

\begin{proposition}\label{prop-seesaw}
\begin{enumerate}
\item (See-saw property) In any exact sequence of pure sheaves $$0\to F\to E\to Q\to 0$$ of the same dimension $d$, we have $p_F \leqpar p_E$ if and only if $p_E \leqpar p_Q$.

\item If $F,E$ are semistable sheaves of the same dimension $d$ and $p_F > p_E$, then $\Hom(F,E)=0$.  

\item If $F,E$ are stable sheaves and $p_F=p_E$, then any nonzero homomorphism $F\to E$ is an isomorphism.

\item Stable sheaves $E$ are \emph{simple:} $\Hom(E,E)=\C$.
\end{enumerate}
\end{proposition}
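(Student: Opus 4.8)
The plan is to prove the four parts in the order given, each feeding the next. The single engine is the additivity of Hilbert polynomials in a short exact sequence, combined with the elementary fact that a nonzero subsheaf of a sheaf pure of dimension $d$ is again pure of dimension $d$.

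For (1), I would extract leading terms from $P_E = P_F + P_Q$. Since $F$ and $Q$ are pure of dimension $d$, the coefficients $\alpha_d(F)$ and $\alpha_d(Q)$ are positive and $\alpha_d(E) = \alpha_d(F) + \alpha_d(Q)$, so after dividing, $p_E = \tfrac{\alpha_d(F)}{\alpha_d(E)}p_F + \tfrac{\alpha_d(Q)}{\alpha_d(E)}p_Q$ is a convex combination of $p_F$ and $p_Q$ with strictly positive weights. Hence $p_E - p_F = \tfrac{\alpha_d(Q)}{\alpha_d(E)}(p_Q - p_F)$ and $p_Q - p_E = \tfrac{\alpha_d(F)}{\alpha_d(E)}(p_Q - p_F)$ are positive scalar multiples of one and the same polynomial; since ``comparison at large values'' sees only the sign of the leading nonzero coefficient, $p_E - p_F$ and $p_Q - p_E$ have the same sign and vanish together, which is precisely the asserted equivalence in its $<$, $=$, and $\leq$ forms. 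I would record that this argument in fact only uses that all three sheaves have $d$-dimensional support, not full purity, since that is the form needed below.

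For (2) and (3), I would factor a nonzero $\phi\colon F\to E$ through its image $G=\im\phi$, a quotient of $F$ and a nonzero subsheaf of $E$. As a nonzero subsheaf of the pure sheaf $E$, $G$ is pure of dimension $d$, and semistability of $E$ gives $p_G\leq p_E$. If $K=\ker\phi$ is zero then $G\cong F$; otherwise $K$ is a nonzero subsheaf of the pure sheaf $F$, hence pure of dimension $d$, semistability of $F$ gives $p_K\leq p_F$, and applying the (strengthened) see-saw property to $0\to K\to F\to G\to 0$ gives $p_F\leq p_G$. Either way $p_F\leq p_G\leq p_E$. For (2) this contradicts $p_F>p_E$, so $\Hom(F,E)=0$. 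For (3), where $p_F=p_E$, the chain forces $p_G=p_F=p_E$; then stability (with its strict inequalities) rules out $K$ being a proper nonzero subsheaf of $F$ (that would give $p_F<p_G$) and rules out $G$ being a proper subsheaf of $E$ (that would give $p_G<p_E$), so $\phi$ is injective and surjective, i.e.\ an isomorphism. Part (4) is then immediate: by (3) with $F=E$, every nonzero endomorphism of the stable sheaf $E$ is an isomorphism; since $X$ is projective, $\Hom(E,E)$ is a finite-dimensional $\C$-algebra, hence a finite-dimensional division algebra over the algebraically closed field $\C$, so $\Hom(E,E)=\C$. Concretely, for $\phi\in\Hom(E,E)$ I would take a root $\lambda$ of its minimal polynomial over $\C$; then $\phi-\lambda\,\id_E$ is a zero divisor, so not an isomorphism, so $0$ by (3), whence $\phi=\lambda\,\id_E$.

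I expect the only genuine subtlety — the main obstacle to a clean writeup rather than a deep difficulty — to be bookkeeping with supports and purity. A quotient of a pure sheaf need not be pure, so in (2) and (3) one must observe that the \emph{image} sheaf $G$, being a subsheaf of a pure $d$-dimensional sheaf, does have $d$-dimensional support before invoking the see-saw property, and one must check that the proof of (1) really does go through at that level of generality. Apart from that, (1) is the convex-combination computation and the rest is diagram chasing.
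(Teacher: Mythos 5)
Your proof is correct and follows essentially the same route as the paper: part (1) by writing $p_E$ as a convex combination of $p_F$ and $p_Q$ with positive weights, parts (2)--(3) by factoring a homomorphism through its image and using that subsheaves of pure sheaves are pure, and part (4) by producing a scalar $\lambda$ with $\phi-\lambda\,\id_E$ not an isomorphism. The only cosmetic difference is in (4), where the paper finds $\lambda$ as an eigenvalue of the induced map on a fiber $E_x$ while you take a root of the minimal polynomial in the finite-dimensional algebra $\Hom(E,E)$; both are standard and your worry about purity of quotients is already resolved by your own observation that the image is a subsheaf of the pure sheaf $E$.
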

\begin{proof}
(1)  We have $P_E = P_F + P_Q$, so $\alpha_d(E) = \alpha_d(F)+\alpha_d(Q)$ and $$p_E = \frac{P_E}{\alpha_d(E)} = \frac{P_F+P_Q}{\alpha_d(E)} = \frac{\alpha_d(F)p_F+\alpha_d(Q)p_Q}{\alpha_d(E)}.$$ Thus $p_E$ is a weighted mean of $p_F$ and $p_Q$, and the result follows.

(2) Let $f:F\to E$ be a homomorphism, and put $C = \im f$ and $K = \ker f$.  Then $C$ is pure of dimension $d$ since $E$ is, and $K$ is pure of dimension $d$ since $F$ is.  By (1) and the semistability of $F$, we have $p_C\geq p_F > p_E$.  This contradicts the semistability of $E$ since $C\subset E$.

(3) Since $p_F = p_E$, $F$ and $E$ have the same dimension.  With the same notation as in (2), we instead find $p_C\geq p_F = p_E$, and the stability of $E$ gives $p_C=p_E$ and $C=E$.  If $f$ is not an isomorphism then $p_K = p_F$, contradicting stability of $F$.  Therefore $f$ is an isomorphism.  

(4) Suppose $f:E\to E$ is any homomorphism.  Pick some point $x\in X$.  The linear transformation $f_x:E_x\to E_x$ has an eigenvalue $\lambda\in \C$.  Then $f-\lambda \id_E$ is not an isomorphism, so it must be zero.  Therefore $f=\lambda \id_E$.
\end{proof}

Harder-Narasimhan filtrations enable us to study arbitrary pure sheaves in terms of semistable sheaves.  Proposition \ref{prop-seesaw} is one of the important ingredients in the proof of the next theorem.

\begin{theorem-definition}[\cite{HuybrechtsLehn}]
Let $E$ be a pure sheaf of dimension $d$.  Then there is a unique filtration
$$0=E_0\subset E_1\subset \cdots \subset E_\ell = E$$ called the \emph{Harder-Narasimhan filtration} such that the quotients $\gr_i = E_i/E_{i-1}$  are semistable of dimension $d$ and reduced Hilbert polynomial $p_i$, where $$p_1 > p_2 > \cdots >p_\ell.$$
\end{theorem-definition}

In order to construct (semi)stable sheaves it is frequently necessary to also work with sheaves that are not semistable.  The next example outlines one method for constructing semistable vector bundles.  This general method was used by Dr\'ezet and Le Potier to classify the possible Hilbert polynomials of semistable sheaves on $\P^2$ \cite{LePotierLectures,DLP}.

\begin{example}\label{ex-existence}
Let $(X,H)$ be a smooth polarized projective variety.  Suppose $A$ and $B$ are vector bundles on $X$ and that the sheaf $\sHom(A,B)$ is globally generated.  For simplicity assume $r(B)-r(A) \geq \dim X$.  Let $S \subset \Hom(A,B)$ be the open subset parameterizing injective sheaf maps; this is set is nonempty since $\sHom(A,B)$ is globally generated.  Consider the family $\cE$ of sheaves on $X$ parameterized by $S$ where the sheaf $E_s$ parameterized by $s\in S$ is the cokernel $$0\to A\fto{s} B\to E_s\to 0.$$  Then for general $s\in S$, the sheaf $E_s$ is a vector bundle \cite[Proposition 2.6]{HuizengaJAG} with Hilbert polynomial $P:=P_B-P_A$.  In other words, restricting to a dense open subset $S'\subset S$, we get a family of locally free sheaves parameterized by $S'$.

Next, semistability is an open condition in families.  Thus there is a (possibly empty) open subset $S''\subset S'$ parameterizing semistable sheaves.  Let $\ell>0$ be an integer and pick polynomials $P_1,\ldots,P_\ell$ such that $P_1+\cdots +P_\ell = P$ and the corresponding reduced polynomials $p_1,\ldots,p_\ell$ have $p_1>\cdots > p_\ell$.  Then there is a locally closed subset $S_{P_1,\ldots,P_\ell} \subset S'$ parameterizing sheaves with a Harder-Narasimhan filtration of length $\ell$ with factors of Hilbert polynomial $P_1,\ldots,P_{\ell}$.  Such loci are called \emph{Shatz strata} in the base $S'$ of the family.

Finally, to show that $S''$ is nonempty, it suffices to show that the Shatz stratum $S_P$ corresponding to semistable sheaves is dense.  One approach to this problem is to show that every Shatz stratum $S_{P_1,\ldots,P_\ell}$ with $\ell \geq 2$ has codimension at least $1$.  See \cite[Chapter 16]{LePotierLectures} for an example where this is carried out in the case of $\P^2$.
\end{example}

Just as the Harder-Narasimhan filtration allows us to use semistable sheaves to build up arbitrary pure sheaves, Jordan-H\"older filtrations decompose semistable sheaves in terms of stable sheaves.  

\begin{theorem-definition}\cite{HuybrechtsLehn}
Let $E$ be a semistable sheaf of dimension $d$ and reduced Hilbert polynomial $p$.  There is a filtration $$0 = E_0 \subset E_1\subset \cdots \subset E_\ell = E$$ called the \emph{Jordan-H\"older filtration} such that the quotients $\gr_i = E_{i}/E_{i-1}$ are stable with reduced Hilbert polynomial $p$.  The filtration is not necessarily unique, but the list of stable factors is unique up to reordering.
\end{theorem-definition}

We can now precisely state the critical definition of $S$-equivalence.

\begin{definition}
Semistable sheaves $E$ and $F$ are \emph{$S$-equivalent} if they have the same list of Jordan-H\"older factors.
\end{definition}

We have already seen an example of an $S$-equivalent family of semistable sheaves in Example \ref{ex-Sequiv}, and we observed that all the parameterized sheaves must be represented by the same point in the moduli space.  In fact, the converse is also true, as the next theorem shows.

\begin{theorem}
Two semistable sheaves $E,F$ with Hilbert polynomial $P$ are represented by the same point in $M(P)$ if and only if they are $S$-equivalent.  Thus, the points of $M(P)$ are in bijective correspondence with $S$-equivalence classes of semistable sheaves with Hilbert polynomial $P$.

In particular, if there are strictly semistable sheaves of Hilbert polynomial $P$, then $M(P)$ is not a fine moduli space.
\end{theorem}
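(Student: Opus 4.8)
The plan is to prove both directions of the biconditional and then deduce the "fine moduli space" statement as a corollary. Throughout I will use that $M(P)$ corepresents $\cM(P)$, together with Proposition \ref{prop-seesaw} and the existence and uniqueness (up to reordering) of Jordan--H\"older factors.

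First I would prove that $S$-equivalent sheaves map to the same point. The key tool is a degeneration-to-the-associated-graded argument: if $E$ is semistable with Jordan--H\"older filtration $0 = E_0 \subset E_1 \subset \cdots \subset E_\ell = E$, I want to build a family $\cE$ over $\A^1$ (or over a smooth curve) whose fiber over $t\neq 0$ is $E$ and whose fiber over $t=0$ is $\gr(E) = \bigoplus_i \gr_i$. One standard construction is the Rees-type family associated to the filtration: set $\cE \subset E \otimes \OO_{\A^1}$ locally as the subsheaf generated by $t^{-i} E_i$, or equivalently realize the family as repeated applications of the elementary-type degeneration that collapses a single nonsplit extension $0\to F'\to E\to F''\to 0$ into $F'\oplus F''$, exactly as in Example \ref{ex-Sequiv1} and Example \ref{ex-Sequiv}. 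Flatness over $\A^1$ holds because the Hilbert polynomial is constant along the family, and every fiber is semistable (its JH factors are those of $E$), so $\cE$ is a family of semistable sheaves with Hilbert polynomial $P$. The induced moduli map $\A^1 \to M(P)$ must therefore be constant, since already the restriction to $\A^1\setminus\{0\}$ is constant (all those fibers are isomorphic to $E$) and $M(P)$ is separated; hence $E$ and $\gr(E)$ are represented by the same point. Since $\gr(E)$ depends only on the list of JH factors (up to reordering), any two $S$-equivalent sheaves are represented by the same point.

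Next I would prove the converse: if $E$ and $F$ are represented by the same point of $M(P)$, they are $S$-equivalent. By the previous paragraph I may replace $E$ and $F$ by their polystable representatives $\gr(E) = \bigoplus A_i$ and $\gr(F) = \bigoplus B_j$, where the $A_i$, $B_j$ are stable with reduced Hilbert polynomial $p = p_E = p_F$; it suffices to show that these two lists of stable sheaves coincide up to reordering. Here I would invoke the construction of $M(P)$ via GIT: points of $M(P)$ correspond to closed orbits in the semistable locus of the relevant Quot scheme, polystable sheaves are precisely the ones with closed orbit, and two polystable sheaves lie in the same closed orbit only if they are isomorphic. Alternatively, and more in the spirit of this excerpt, one can argue directly using the corepresentability: if $\bigoplus A_i$ and $\bigoplus B_j$ were not $S$-equivalent, one produces a scheme $M'$ and a natural transformation $\cM(P)\to \Mor_\Sch(-,M')$ that separates them — for instance by exhibiting an invariant (built from $\Hom$-spaces against fixed stable sheaves, which are semicontinuous in families and locally constant on $S$-equivalence classes) that takes different values — contradicting the universal property that forces everything to factor through $M(P)$. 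I expect to state this direction by citing Huybrechts--Lehn rather than reproving the GIT setup.

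Finally, the last assertion is immediate: if there exists a strictly semistable sheaf $E$ of Hilbert polynomial $P$, then $E$ has at least two JH factors, the polystable sheaf $\gr(E)$ is $S$-equivalent to but not isomorphic to $E$ (the extension is nonsplit somewhere in the filtration), so these two non-isomorphic sheaves define the same point of $M(P)$; hence the points of $M(P)$ are not in bijection with isomorphism classes of sheaves, and by the remark following the definition of "represents," $M(P)$ cannot represent $\cM(P)$, i.e. it is not a fine moduli space. The main obstacle is the first direction: constructing the degenerating family $\cE$ cleanly and verifying its flatness and fiberwise semistability; the converse I would largely outsource to the GIT construction. I would present the degeneration argument carefully for a single nonsplit extension and then induct on the length $\ell$ of the Jordan--H\"older filtration.
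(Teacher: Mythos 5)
The paper does not actually prove this theorem: it is stated as a standard fact, with the forward direction motivated by Examples \ref{ex-Sequiv1} and \ref{ex-Sequiv} and the rest implicitly deferred to \cite{HuybrechtsLehn}. Your forward direction is exactly that mechanism iterated along the Jordan--H\"older filtration, and it is correct: the Rees-type degeneration of $E$ to $\gr(E)$ is flat with semistable fibers, and the moduli map $\A^1\to M(P)$ is constant because the fiber over the closed point $[E]$ is closed and contains the dense subset $\A^1\setminus\{0\}$. Outsourcing the converse to the GIT construction (points of $M(P)$ correspond to closed orbits, and closed orbits in the semistable locus are exactly the polystable sheaves) is also the right move and matches the intended citation. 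I would drop the ``alternative'' converse sketch, though: an invariant such as $\hom(A,E_s)$ is only upper semicontinuous, so it does not define a natural transformation to a scheme-valued functor, and it cannot be used to separate points via corepresentability as written.

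The one step that can genuinely fail is in the final deduction. You assert that $\gr(E)\not\cong E$ because ``the extension is nonsplit somewhere in the filtration,'' but a strictly semistable sheaf may already be polystable, and worse, its entire $S$-equivalence class may consist of a single isomorphism class. For example, take $P=P_{\OO_{\P^2}^{\oplus 2}}$ on $\P^2$: the only semistable sheaf with this Hilbert polynomial is $\OO_{\P^2}^{\oplus 2}$, since a nonsplit self-extension of $\OO_{\P^2}$ would require $\Ext^1(\OO_{\P^2},\OO_{\P^2})=H^1(\OO_{\P^2})\neq 0$. Here $M(P)$ is a single point whose points do biject with isomorphism classes, yet $M(P)$ is still not fine: $\cM(\P^1)$ contains the classes of $p^*(\OO\oplus\OO)$ and $p^*(\OO\oplus\OO(1))$, which are not related by twisting by a line bundle from the base, so $\cM(\P^1)\not\cong \Mor(\P^1,M(P))$. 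Thus the point-counting argument establishes non-fineness only when some pair of Jordan--H\"older factors admits a nonsplit extension; in the remaining rigid polystable case one needs the separate observation that strictly semistable sheaves are non-simple, and their automorphisms obstruct the existence of a universal family. This is a small repair, but as written the ``in particular'' clause is not fully proved.
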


\begin{remark}
The question of when the open subset $M^s(P)$ parameterizing stable sheaves is a fine moduli space for the moduli functor $\cM^s(P)$ of stable families is somewhat delicate; in this case the points of $M^s(P)$ are in bijective correspondence with the isomorphism classes of stable sheaves, but there still need not be a universal family.

One positive result in this direction is the following.  Let ${\bf v}\in K_{\num}(X)$ be the numerical class of a stable sheaf with Hilbert polynomial $P$ (see Example \ref{ex-Chern}).  Consider the set of integers of the form $({\bf v},[F])$, where $F$ is a coherent sheaf and $(-,-)$ is the Euler pairing.  If their greatest common divisor is $1$, then $M^s({\bf v})$ carries a universal family.  (Note that the number-theoretic requirement also guarantees that there are no semistable sheaves of class ${\bf v}$.)  See \cite[\S 4.6]{HuybrechtsLehn} for details.
\end{remark}

\section{Properties of moduli spaces}\label{sec-properties} To study the birational geometry of moduli spaces of sheaves in depth it is typically necessary to have some kind of control over the geometric properties of the space.  For example, is the moduli space nonempty? Smooth? Irreducible?  What are the divisor classes on the moduli space?

Our original setup of studying a smooth projective polarized variety $(X,H)$ of any dimension is too general to get satisfactory answers to these questions.  We first mention some results on smoothness which hold with a good deal of generality, and then turn to more specific cases with far more precise results.

\subsection{Tangent spaces, smoothness, and dimension}  Let $(X,H)$ be a smooth polarized variety, and let ${\bf v}\in K_{\num}(X)$.  The tangent space to the moduli space $M=M({\bf v})$ is typically only well-behaved at points $E\in M$ parameterizing stable sheaves $E$, due to the identification of $S$-equivalence classes of sheaves in $M$.  

Let $D=\Spec \C[\varepsilon]/(\varepsilon^2)$ be the dual numbers, and let $E$ be a stable sheaf.  Then the tangent space to $M$ is the subset of $\Mor(D,M)$ corresponding to maps sending the closed point of $D$ to the point $E$.  By the moduli property, such a map corresponds to a sheaf $\cE$ on $X\times D$, flat over $D$, such that $E_0=E$.  

Deformation theory identifies the set of sheaves $\cE$ as above with the vector space $\Ext^1(E,E)$, so there is a natural isomorphism $T_EM \cong \Ext^1(E,E)$.  The obstruction to extending a first-order deformation is a class $\Ext^2(E,E)$, and if $\Ext^2(E,E)=0$ then $M$ is smooth at $E$.

For some varieties $X$ it is helpful to improve the previous statement slightly, since the vanishing $\Ext^2(E,E)=0$ can be rare, for example if $K_X$ is trivial.  If $E$ is a vector bundle, let $$\tr: \sEnd(E)\to \OO_X$$ be the \emph{trace map}, acting fiberwise as the ordinary trace of an endomorphism.  Then $$H^i(\sEnd(E))\cong \Ext^i(E,E),$$ so there are induced maps on cohomology$$\tr^i:\Ext^i(E,E)\to H^i(\OO_X).$$ We write $\Ext^i(E,E)_0 \subset \Ext^i(E,E)$ for $\ker \tr^i$, the subspace of \emph{traceless extensions}.  The subspaces $\Ext^i(E,E)_0$ can also be defined if $E$\ is just a coherent sheaf, but the construction is more delicate and we omit it.  

\begin{theorem}\label{thm-smoothness}
The tangent space to $M$ at a stable sheaf $E$ is canonically isomorphic to $\Ext^1(E,E)$, the space of first order deformations of $E$.  If $\Ext^2(E,E)_0=0$, then $M$ is smooth at $E$ of dimension $\ext^1(E,E)$.
\end{theorem}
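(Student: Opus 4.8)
The plan is to establish the tangent space identification first, and then handle smoothness and dimension by a standard deformation-theoretic argument involving the trace splitting. For the tangent space, I would argue exactly as in the paragraph preceding the statement: by the corepresentability (indeed, representability, since $E$ is stable) of the moduli functor at the point $[E]$, a morphism $D = \Spec\C[\varepsilon]/(\varepsilon^2) \to M$ sending the closed point to $[E]$ corresponds to an isomorphism class of $D$-flat sheaf $\cE$ on $X \times D$ with $\cE|_{X \times \{0\}} \cong E$. Deformation theory of coherent sheaves then identifies the set of such flat extensions (up to isomorphism restricting to the identity on $E$) with $\Ext^1_X(E,E)$, and one checks this bijection is linear in the $D$-scheme structure, giving $T_{[E]}M \cong \Ext^1(E,E)$ canonically. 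Here one uses stability only to know that $M$ genuinely represents the functor in a neighborhood of $[E]$ (no $S$-equivalence collapsing, and $E$ simple by Proposition \ref{prop-seesaw}(4), so automorphisms do not interfere).

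For smoothness, the standard fact is that deformations of $E$ are unobstructed once the obstruction space vanishes: given a small extension of Artinian rings $0 \to (t) \to \tilde A \to A \to 0$ and a flat deformation $\cE_A$ of $E$ over $A$, there is an obstruction class $\mathrm{ob}(\cE_A) \in \Ext^2_X(E,E)$ whose vanishing is necessary and sufficient for $\cE_A$ to lift to a flat deformation over $\tilde A$; moreover the set of such lifts is a torsor under $\Ext^1(E,E)$. Hence if $\Ext^2(E,E) = 0$ the deformation functor is formally smooth at $E$, so $M$ is smooth at $[E]$ of dimension $\dim T_{[E]}M = \ext^1(E,E)$. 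To get the sharper statement with $\Ext^2(E,E)_0 = 0$, I would invoke the refinement that, for $E$ locally free (and, via the delicate construction alluded to in the text, for arbitrary coherent $E$), the obstruction class is traceless, i.e. lies in $\Ext^2(E,E)_0$: this follows because the trace map $\Ext^i(E,E) \to H^i(\OO_X)$ is compatible with the obstruction theory of $E$ versus the (unobstructed, since it is a deformation of an invertible object) obstruction theory of $\det E$, so the trace of the obstruction is the obstruction to deforming a line bundle, which vanishes. Thus $\Ext^2(E,E)_0 = 0$ already forces all obstructions to vanish, and the same torsor argument gives smoothness of the expected dimension $\ext^1(E,E)$.

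I expect the main obstacle to be the traceless refinement. Proving that the obstruction class lands in $\Ext^2(E,E)_0$ requires either a careful functoriality argument for the cotangent-complex/obstruction theory under $E \mapsto \det E$ (clean for $E$ locally free, where $\tr$ and $\det$ are transparent), or, in the coherent case, the more technical construction of $\Ext^i(E,E)_0$ that the excerpt explicitly declines to reproduce. In a survey it is reasonable to state this as standard and cite Huybrechts–Lehn \cite{HuybrechtsLehn} for the full details; the honest content of the proof is: (i) $T_{[E]}M = \Ext^1(E,E)$ by deformation theory and representability at stable sheaves, and (ii) vanishing of the (traceless) obstruction space makes the deformation functor smooth, whence $M$ is smooth of dimension $\ext^1(E,E)$ at $[E]$. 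The one genuinely nontrivial input beyond bookkeeping is the compatibility of obstructions with the determinant map that produces the traceless sharpening.
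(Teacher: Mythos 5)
Your proposal follows essentially the same route as the paper, which states this result without a formal proof but develops exactly your argument in the paragraphs immediately preceding it: the tangent space is identified with $\Ext^1(E,E)$ via maps from the dual numbers $\Spec\C[\varepsilon]/(\varepsilon^2)$ and deformation theory, smoothness follows from vanishing of the (traceless) obstruction space, and the traceless refinement is left to the standard references. One small caution: stability of $E$ does not by itself make $M$ a fine moduli space (the paper's own remark on universal families makes this point), so the tangent-space identification is better justified by the local structure of the moduli space at a stable point, where $E$ is simple and the deformation functor is pro-representable, rather than by representability of the global functor.
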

   
We now examine several consequences of Theorem \ref{thm-smoothness} in the case of curves and surfaces.
   
\begin{example}
Suppose $X$ is a smooth curve of genus $g$, and let $M(r,d)$ be the moduli space of semistable sheaves of rank $r$ and degree $d$ on $X$.  Then the vanishing $\Ext^2(E,E)=0$ holds for any sheaf $E$, so the moduli space $M(P)$ is smooth at every point parameterizing a stable sheaf $E$.  Since stable sheaves are simple, the dimension at such a sheaf is $$\ext^1(E,E)  = 1-\chi(E,E) = r^2(g-1)+1.$$
\end{example}

\begin{example}
Let $(X,H)$ be a smooth variety, and let ${\bf v}=[\OO_X]\in K_{\num}(X)$ be the numerical class of $\OO_X$.  The moduli space $M({\bf v})$ parameterizes line bundles numerically equivalent to $\OO_X$; it is the connected component $\Pic^0 X$ of the Picard scheme $\Pic X$ which contains $\OO_X$.  For any line bundle $L\in M({\bf v})$, we have $\sEnd(L)\cong \OO_X$ and the trace map $\sEnd(L)\to \OO_X$ is an isomorphism.  Thus $\Ext^2(L,L)_0=0$, and $M({\bf v})$ is smooth of dimension $\ext^1(L,L) = h^1(\OO_X) =:q(X)$, the \emph{irregularity} of $X$.
\end{example}

\begin{example}\label{ex-dimSurface}
Suppose $(X,H)$ is a smooth surface and $E\in M^s(P)$ is a stable vector bundle.  The sheaf map $$\tr:\sEnd(E)\to \OO_X$$ is surjective, so the induced map $\tr^2:\Ext^2(E,E)\to H^2(\OO_X)$ is surjective since $X$ is a surface.  Therefore $\ext^2(E,E)_0=0$ if and only if $\ext^2(E,E) = h^2(\OO_X)$. We conclude that if $\ext^2(E,E)_0=0$ then $M(P)$ is smooth at $E$ of local dimension \begin{align*}\dim_E M(P)=\ext^1(E,E) &= 1-\chi(E,E)+\ext^2(E,E) \\&= 1-\chi(E,E)+h^2(\OO_X)
\\&= 2r^2\Delta(E)+\chi(\OO_X)(1-r^2)+q(X). \end{align*} 
\end{example}

\begin{example}
If $(X,H)$ is a smooth surface such that $H.K_X<0$, then the vanishing $\Ext^2(E,E)=0$ is automatic.  Indeed, by Serre duality,
$$\Ext^2(E,E) \cong \Hom(E,E\te K_X)^*.$$ Then $$\mu(E\te K_X) = \mu(E) + \mu(K_X) = \mu(E) + H.K_X < \mu(E),$$ so $\Hom(E,E\te K_X) = 0$ by Proposition \ref{prop-seesaw}.

The assumption $H.K_X<0$ in particular holds whenever $X$ is a del Pezzo or Hirzebruch surface.  Thus the moduli spaces $M({\bf v})$ for these surfaces are smooth at points corresponding to stable sheaves.
\end{example}

\begin{example}
If $(X,H)$ is a smooth surface and $K_X$ is trivial (e.g. $X$ is a K3 or abelian surface), then the weaker vanishing $\Ext^2(E,E)_0=0$ holds.  The trace map $\tr^2:H^2(\sEnd(E))\to H^2(\OO_X)$ is Serre dual to an isomorphism $$H^0(\OO_X)\to H^0(\sEnd(E)) = \Hom(E,E),$$ so $\tr^2$ is an isomorphism and $\Ext^2(E,E)_0=0$.  
\end{example}

\subsection{Existence and irreducibility}

What are the possible numerical invariants ${\bf v}\in K_{\num}(X)$ of a semistable sheaf on $X$?  When the moduli space is nonempty, is it irreducible? As usual, the case of curves is simplest.

\subsubsection{Existence and irreducibility for curves}
Let $M=M(r,d)$ be the moduli space of semistable sheaves of rank $r$ and degree $d$ on a smooth curve $X$ of genus $g\geq 1$.  Then $M$ is nonempty and irreducible, and unless $X$ is an elliptic curve and $r,d$ are not coprime then the stable sheaves are dense in $M$.  To show $M(r,d)$ is nonempty one can follow the basic outline of Example \ref{ex-existence}.  For more details, see \cite[Chapter 8]{LePotierLectures}. 

Irreducibility of $M(r,d)$ can be proved roughly as follows.  We may as well assume $r\geq 2$ and $d\geq 2rg$ by tensoring by a sufficiently ample line bundle.  Let $L$ denote a line bundle of degree $d$ on $X$, and consider extensions of the form $$0\to \OO_X^{r-1} \to E\to L\to 0.$$ As $L$ and the extension class vary, we obtain a family of sheaves $\cE$ parameterized by a vector bundle $S$ over the component $\Pic^d(X)$ of the Picard group.  

On the other hand, by the choice of $d$,  any semistable $E\in M(r,d)$ is generated by its global sections.  A general collection of $r-1$ sections of $E$ will be linearly independent at every $x\in X$, so that the quotient of the corresponding inclusion $\OO_X^{r-1}\to E$ is a line bundle.  Thus every semistable $E$ fits into an exact sequence as above.  The (irreducible) open subset of $S$ parameterizing semistable sheaves therefore maps onto $M(r,d)$, and the moduli space is irreducible.

\subsubsection{Existence for surfaces} For surfaces the existence question is quite subtle.  The first general result in this direction is the Bogomolov inequality.  

\begin{theorem}[Bogomolov inequality]\label{thm-bogomolov}
If $(X,H)$ is a smooth surface and $E$ is a $\mu_H$-semistable sheaf on $X$ then $$\Delta(E)\geq 0.$$
\end{theorem}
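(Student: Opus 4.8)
The plan is to reduce to the case of $\mu_H$-stable vector bundles and then exploit the restriction theorem together with a tensor-power argument. First I would observe that it suffices to treat the case where $E$ is $\mu_H$-stable: if $E$ is only $\mu_H$-semistable, pass to its Jordan--Hölder factors with respect to $\mu_H$-stability. These factors $E_i$ all have the same slope $\mu_H(E_i)=\mu_H(E)$, and the discriminant is additive in short exact sequences of sheaves with equal slope (this is a consequence of the behavior of $\nu$ and $\ch_2$ recorded in Example~\ref{ex-stabilitySurface}), so $\Delta(E)=\sum_i \Delta(E_i)$ and it is enough to bound each $\Delta(E_i)$. Second, I would reduce from torsion-free sheaves to locally free sheaves: the sheaf $E^{\vee\vee}$ is reflexive, hence locally free on a surface, $\mu_H$-stability is preserved, $\mu_H(E^{\vee\vee})=\mu_H(E)$, and $\Delta(E^{\vee\vee})\le \Delta(E)$ because $E\hookrightarrow E^{\vee\vee}$ has quotient supported in dimension zero (which only decreases $-\ch_2/r$, hence only decreases $\Delta$ after accounting for the equal rank and $c_1$). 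So it suffices to prove $\Delta(E)\ge 0$ for $E$ a $\mu_H$-stable vector bundle.

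For the main case I would use the following standard strategy. The key input is that $\mu_H$-stability restricts well: by a theorem of Mehta--Ramanathan, for $k\gg 0$ the restriction of a $\mu_H$-stable bundle $E$ to a general smooth curve $C\in |kH|$ remains stable, and more importantly all the symmetric or tensor powers $\Sym^n E$ (or $(E\otimes E^\vee)^{\otimes n}$, which is what one actually wants) restrict to semistable bundles on such curves as well. Now suppose for contradiction that $\Delta(E)<0$. Consider the endomorphism bundle $\mathcal{E}\!nd(E)=E\otimes E^\vee$, which has slope $0$ and discriminant $\Delta(\mathcal{E}\!nd(E)) = 2r^2\Delta(E) < 0$ (using additivity of $\Delta$ under tensor product and $\Delta(E^\vee)=\Delta(E)$). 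Riemann--Roch for a twist $\mathcal{E}\!nd(E)\otimes \OO_X(mH)$, using the formula $\chi(F)=r(F)(P(\nu(F))-\Delta(F))$ from Example~\ref{ex-stabilitySurface}, shows that because $\Delta$ is large and negative, the leading Euler-characteristic estimate forces either $h^0$ or $h^2$ of a suitable twist to grow faster than is allowed. To kill $h^2$ one uses Serre duality and the semistability of $\mathcal{E}\!nd(E)$ (slope $0$, so $\Hom$ into a negative twist vanishes); to kill $h^0$ one uses semistability again in the other direction. This produces enough global sections of $\mathcal{E}\!nd(E)\otimes\OO_X(mH)$ for appropriate $m$ to contradict the semistability/simplicity of $E$ — concretely, one gets a section of $\mathcal{E}\!nd(E)\otimes \OO_X(-H)$ after restricting to a high-degree curve, violating $\mu_H$-semistability of the restriction.

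The cleanest route, which I would follow in the write-up, packages the last paragraph via restriction to curves directly: choose $C\in|kH|$ general with $k\gg 0$ so that $E|_C$ is semistable (Mehta--Ramanathan). On the curve one has the elementary inequality $h^0(C,\mathcal{E}\!nd(E)|_C)$ controlled by stability, while Riemann--Roch on $X$ combined with the vanishing $H^2(X,\mathcal{E}\!nd(E)(mH))=0$ for $m$ in a suitable range gives a lower bound $h^0(X,\mathcal{E}\!nd(E)(mH))\ge \chi = r^2 m^2 k^2 H^2/2 + \cdots - 2r^2\Delta(E) + \cdots$, which for a delicate choice of $m$ (roughly $m$ of size comparable to $\sqrt{|\Delta|}/k$) exceeds what the curve estimate permits once $\Delta(E)<0$. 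Tracking the inequalities carefully yields the contradiction. The main obstacle is exactly this numerical balancing act — getting the Mehta--Ramanathan restriction, the $H^2$-vanishing range, and the Riemann--Roch leading terms to line up so that a single choice of $m$ and $k$ closes the argument; all the reductions in the first two paragraphs are routine, but the final estimate is where the real content of the Bogomolov inequality sits.
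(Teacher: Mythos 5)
The paper states the Bogomolov inequality as a classical black box and gives no proof of it, so there is nothing internal to compare your argument against; judging the proposal on its own terms, the two preliminary reductions are essentially sound but the main estimate, as you have set it up, cannot close. One smaller point first: $\Delta$ is not additive across a Jordan--H\"older filtration whose factors merely share the $H$-slope. For $0\to F\to E\to Q\to 0$ one computes
$$r\Delta(E) \;=\; r_F\Delta(F)+r_Q\Delta(Q)\;-\;\frac{r_F r_Q}{2r}\bigl(\nu(F)-\nu(Q)\bigr)^2,$$
and the cross term has the right sign only because $(\nu(F)-\nu(Q)).H=0$ forces $(\nu(F)-\nu(Q))^2\le 0$ by the Hodge index theorem (the same mechanism as in Exercise \ref{ex-twistedBogomolov}). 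So the reduction to stable factors goes through, but it needs Hodge index, not just the bookkeeping with $\nu$ and $\ch_2$ that you cite. (Also, in this paper's normalization $\Delta(\sEnd(E))=2\Delta(E)$, not $2r^2\Delta(E)$; harmless for the sign, but it signals a normalization mismatch running through your formulas.)

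The genuine gap is in your ``cleanest route.'' For the single fixed bundle $\sEnd(E)$ the discriminant enters $\chi(\sEnd(E)(mH))=r^2\bigl(\tfrac{1}{2}m^2H^2-\tfrac{1}{2}mH.K_X+\chi(\OO_X)-2\Delta(E)\bigr)$ only as an additive \emph{constant}, while the restriction-to-curves upper bound on $h^0$ (summing $h^0(C,\sEnd(E)((m-jk)H)|_C)\le r^2\bigl((m-jk)kH^2+1\bigr)$ over $j$ until the slope goes negative) reproduces the same leading term $\tfrac{1}{2}r^2m^2H^2$ together with error terms of order $mk$. Those linear-in-$m$ errors swamp the constant $-2r^2\Delta(E)$ for every choice of $m$ and $k$: since $\Delta(E)$ is a fixed number attached to $E$, there is no asymptotic parameter in which ``$\Delta<0$'' wins, and your heuristic ``$m\sim\sqrt{|\Delta|}/k$'' has nothing to bite on. The step you cannot omit is the one you mention only in passing and then drop: replace $G=\sEnd(E)$ by its tensor (or symmetric) powers $G^{\otimes n}$, which are again $\mu_H$-semistable of slope zero (itself a nontrivial theorem in characteristic $0$), satisfy $\Delta(G^{\otimes n})=n\Delta(G)=2n\Delta(E)$, and hence have $\chi(G^{\otimes n})=r^{2n}\bigl(\chi(\OO_X)-2n\Delta(E)\bigr)$ growing like $n\,r^{2n}$ when $\Delta(E)<0$, whereas the restriction bound on $h^0+h^2$ uses only a bounded number of restriction steps (the slope is always $0$) and is $O(r^{2n})$ with constant independent of $n$. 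Letting $n\to\infty$ is what produces the contradiction; without it, the numerical balancing act you yourself flag as ``where the real content sits'' genuinely does not balance.
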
  

\begin{remark}Note that the discriminant $\Delta(E)$ is independent of the particular polarization $H$, so the inequality holds for any sheaf which is slope-semistable with respect to some choice of  polarization.\end{remark}

  Recall that line bundles $L$ have $\Delta(L)=0$, so in a sense the Bogomolov inequality is sharp.  However, there are certainly Chern characters ${\bf v}$ with $\Delta({\bf v})\geq 0$ such that there is no semistable sheaf of character ${\bf v}$.  A refined Bogomolov inequality should bound $\Delta(E)$ from below in terms of the other numerical invariants of $E$.  Solutions to the existence problem for semistable sheaves on a surface can often be viewed as such improvements of the Bogomolov inequality.

\subsubsection{Existence for $\P^2$}\label{sssec-existP2}
On $\P^2$, the classification of Chern characters ${\bf v}$ such that $M({\bf v})$ is nonempty has been carried out by Dr\'ezet and Le Potier \cite{DLP,LePotierLectures}.  A \emph{(semi)exceptional bundle} is a rigid (semi)stable bundle, i.e. a (semi)stable bundle with $\Ext^1(E,E)=0$.  Examples of exceptional bundles include line bundles, the tangent bundle $T_{\P^2}$, and infinitely more examples obtained by a process of \emph{mutation}.  The dimension formula for a moduli space of sheaves on $\P^2$ reads $$\dim M({\bf v}) = r^2(2\Delta-1)+1,$$ so an exceptional bundle has discriminant $\Delta = \frac{1}{2}-\frac{1}{2r^2}<\frac{1}{2}.$  The dimension formula suggests an immediate refinement of the Bogomolov inequality: if $E$ is a non-exceptional stable bundle, then $\Delta(E)\geq \frac{1}{2}$.    

However, exceptional bundles can provide even stronger Bogomolov inequalities for non-exceptional bundles.  For example, suppose $E$ is a semistable sheaf with $0<\mu(E) <1$.  Then $\Hom(E,\OO_X)=0$ and $$\Ext^2(E,\OO_X)\cong \Hom(\OO_X,E\te K_X)^*=0$$ by semistability and Proposition \ref{prop-seesaw}.  Thus $\chi(E,\OO_X)\leq 0$.  By the Riemann-Roch theorem, this inequality is equivalent to the inequality $$\Delta(E) \geq P(-\mu(E))$$ where $P(x) = \frac{1}{2}x^2+\frac{3}{2}x+1$; this inequality is stronger than the ordinary Bogomolov inequality for any $\mu(E)\in (0,1)$.  
 
 Taking all the various exceptional bundles on $\P^2$ into account in a similar manner, one defines a function $\delta:\R\to \R$ with the property that any non-semiexceptional semistable bundle $E$ satisfies $\Delta(E)\geq \delta(\mu(E))$.  The graph of $\delta$ is Figure \ref{fig-deltaCurve}.  Dr\'ezet and Le Potier prove the converse theorem:  exceptional bundles are the only obstruction to the existence of stable bundles with given numerical invariants.
\begin{theorem}
Let ${\bf v}$ be an integral Chern character on $\P^2$.  There is a non-exceptional stable vector bundle on $\P^2$ with Chern character ${\bf v}$ if and only if $\Delta({\bf v})\geq \delta(\mu({\bf v}))$.
\end{theorem}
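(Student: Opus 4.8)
The plan is to prove the two implications separately. Necessity of $\Delta\geq\delta(\mu)$ is essentially built into the definition of $\delta$, together with a vanishing argument; sufficiency requires one to construct a non-exceptional stable bundle, and that is where the real work lies.

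\textbf{Necessity.} By construction $\delta(\mu)$ is the supremum, over all exceptional bundles $F$ and over the two possible ``directions'', of a family of parabolic lower bounds for the discriminant of a semistable sheaf of slope $\mu$; the content is that each such parabola is a genuine constraint. Indeed, if $E$ is semistable of slope $\mu$ and $F$ is an exceptional bundle whose slope is close enough to $\mu$ on the appropriate side, then, exactly as in the computation with $F=\OO_{\P^2}$ preceding the statement, the groups $\Hom(E,F)$ and $\Ext^2(E,F)$ (or $\Hom(F,E)$ and $\Ext^2(F,E)$, depending on the direction) both vanish: the $\Hom$-group vanishes by Proposition~\ref{prop-seesaw} together with the near-equality of slopes, while $\Ext^2(E,F)\cong\Hom(F,E\otimes K_{\P^2})^*$ vanishes by Serre duality because twisting by $K_{\P^2}=\OO_{\P^2}(-3)$ separates the slopes. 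Hence $\chi(E,F)=-\ext^1(E,F)\leq 0$, and Riemann--Roch converts this into an inequality of the form $\Delta(E)\geq(\text{explicit quadratic in }\mu(E))$. Taking the supremum over all such $F$ gives $\Delta(E)\geq\delta(\mu(E))$ for every semistable sheaf, in particular for a non-exceptional stable bundle of character ${\bf v}$. The point requiring care is that the slope-intervals controlled by the various exceptional bundles genuinely cover all of $\R$, which is where the recursive (``mutation'') description of the set of exceptional slopes is used.

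\textbf{Sufficiency.} Now suppose ${\bf v}=(r,\mu,\Delta)$ is integral with $\Delta\geq\delta(\mu)$; we want a non-exceptional stable bundle of this character. First normalize: tensoring by $\OO_{\P^2}(n)$ shifts $\mu$ by $n$ and preserves $\Delta$, stability, and (non)exceptionality, so we may assume $\mu$ lies in a fixed fundamental region, and the mutation action on exceptional bundles reduces us further to a small number of configurations. For such a $\mu$ there is an adapted exceptional triple $(F_0,F_1,F_2)$ (a mutation of $(\OO_{\P^2},\OO_{\P^2}(1),\OO_{\P^2}(2))$), and $\Delta\geq\delta(\mu)$ is precisely what forces the integers $-\chi(E,F_0)$, $\chi(E,F_1)$, $-\chi(E,F_2)$ to be nonnegative. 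A Beilinson-type spectral sequence for the helix generated by this triple then shows that a sheaf with character ${\bf v}$ and the expected cohomology vanishing is the cokernel of an injection of bundles
\[
0\to A\to B\to E\to 0,
\]
with $A,B$ suitable direct sums of the $F_i$, with $\sHom(A,B)$ globally generated, and with $r(B)-r(A)=r$. Following Example~\ref{ex-existence}, a general such map is injective with locally free cokernel, yielding a family $\cE$ of bundles of character ${\bf v}$ over an irreducible base $S$; low-rank cases where this resolution degenerates are handled directly.

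\textbf{Stability, and the main obstacle.} It remains to see that the general member of $\cE$ is stable. As in Example~\ref{ex-existence}, one stratifies $S$ by the Shatz strata $S_{P_1,\ldots,P_\ell}$ recording the Harder--Narasimhan type and proves that every stratum with $\ell\geq 2$ has codimension at least $1$, so the open stratum of semistable sheaves is dense. The codimension of such a stratum is bounded below by a sum of $\dim\Hom$- and $\dim\Ext^1$-type terms between successive Harder--Narasimhan factors, which Riemann--Roch rewrites as a quadratic expression in the ranks, slopes, and discriminants of those factors; feeding in the Bogomolov inequality $\Delta(\gr_i)\geq 0$ (Theorem~\ref{thm-bogomolov}) for each factor, together with convexity of that quadratic form and the hypothesis $\Delta\geq\delta(\mu)$, one checks that this lower bound is $\geq 1$. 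Once the semistable locus is dense, upgrading to a \emph{stable} general member is automatic when the relevant greatest common divisor is $1$ (as in the remark following the $S$-equivalence theorem), and otherwise follows from a further codimension count on the strictly semistable locus; non-exceptionality is automatic once $\Delta\geq\tfrac12$, the only characters with $\Delta=\delta(\mu)<\tfrac12$ being exceptional ones. The hard part is exactly this last circle of ideas: choosing the adapted exceptional triple correctly for every $\mu$ and then making the quadratic codimension inequality come out with the right sign is delicate, because $\delta$ is itself defined recursively from exceptional bundles, so the estimate must be run in tandem with that recursion --- this is the technical heart of Dr\'ezet and Le Potier's argument.
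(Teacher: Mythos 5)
Your strategy coincides with the one the paper indicates: the survey does not prove this theorem itself but derives the necessity direction exactly as you do (vanishing of $\Hom$ and $\Ext^2$ against a nearby exceptional bundle, hence $\chi\leq 0$, converted by Riemann--Roch into a parabolic lower bound on $\Delta$), and for sufficiency states only that ``the method of proof follows the outline indicated in Example~\ref{ex-existence}'' with a reference to Dr\'ezet--Le Potier. Your expansion of that outline --- Beilinson-type resolutions by an adapted exceptional triple to produce an irreducible family, followed by the Shatz-stratum codimension estimate fed by the Bogomolov inequality --- is the intended argument, and you correctly identify the codimension estimate run in tandem with the recursive definition of $\delta$ as the technical heart.
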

The method of proof follows the outline indicated in Example \ref{ex-existence}. \begin{figure}[t]
\begin{center}
\includegraphics[scale=0.95]{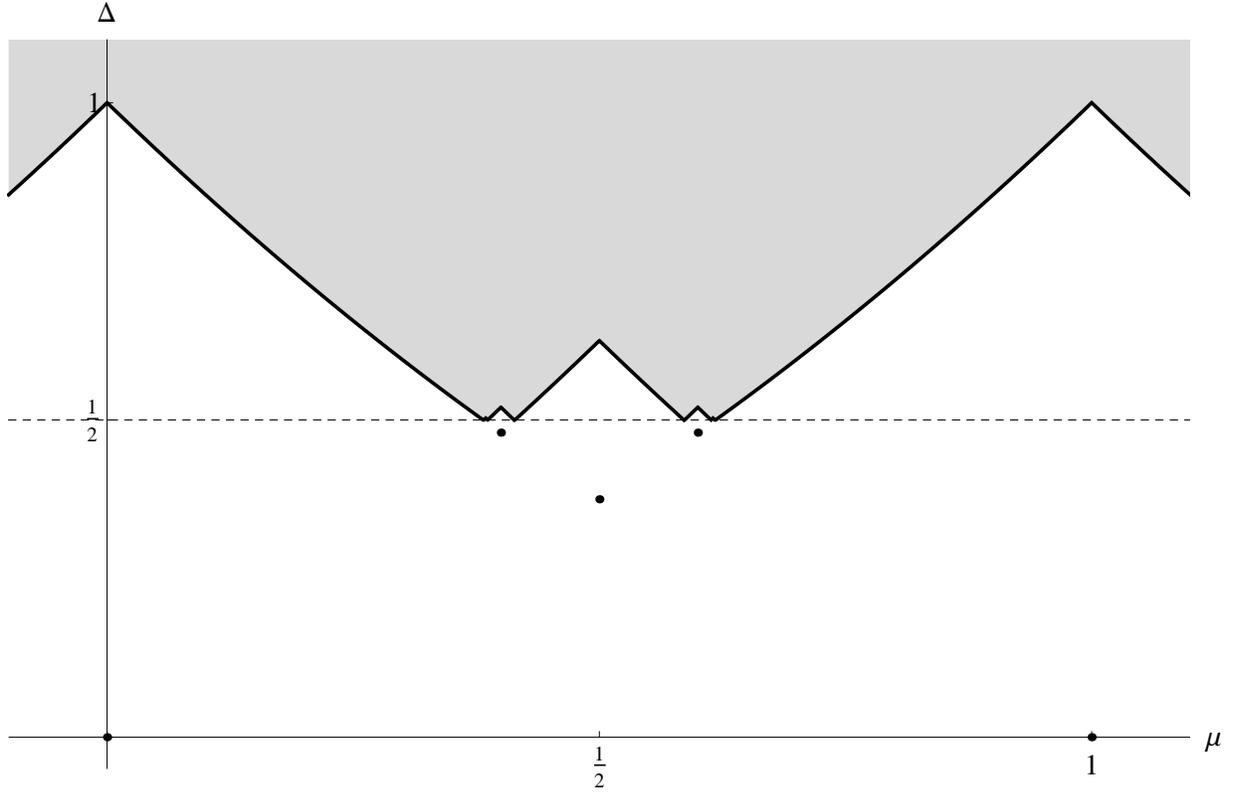}
\end{center}
\caption{The curve $\delta(\mu)$ occurring in the classification of stable bundles on $\P^2$.  If $(r,\mu,\Delta)$ are the invariants of an integral Chern character, then there is a non-exceptional stable bundle $E$ with these invariants if and only if $\Delta\geq \delta(\mu)$.  The invariants of the first several exceptional bundles are also displayed.}
\label{fig-deltaCurve}
\end{figure}

\subsubsection{Existence for other rational surfaces} In the case of $X=\P^1\times \P^1$, Rudakov \cite{Rudakov1,Rudakov2} gives a solution to the existence problem that is similar to the Dr\'ezet-Le Potier result for $\P^2$.  However, the geometry of exceptional bundles is more complicated than for $\P^2$, and as a result the classification is somewhat less explicity.
To our knowledge a satisfactory answer to the existence problem has not yet been given for a Del Pezzo or Hirzebruch surface.

\subsubsection{Irreducibility for rational surfaces}
For many rational surfaces $X$ it is known that the moduli space $M_H({\bf v})$ is irreducible.  One common argument is to introduce a mild relaxation of the notion of semistability and show that the stack parameterizing such objects is irreducible and contains the semistable sheaves as an open dense substack.

For example, Hirschowitz and Laszlo \cite{HirschowitzLaszlo} introduce the notion of a \emph{prioritary sheaf} on $\P^2$.  A torsion-free coherent sheaf $E$ on $\P^2$ is prioritary if $$\Ext^2(E,E(-1)) = 0.$$  By Serre duality, any torsion-free sheaf whose Harder-Narasimhan factors have slopes that are ``not too far apart'' will be prioritary, so it is very easy to construct prioritary sheaves. For example, semistable sheaves are prioritary, and sheaves of the form $\OO_{\P^2}(a)^{\oplus k} \oplus \OO_{\P^2}(a+1)^{\oplus l}$ are prioritary.  The class of prioritary sheaves is also closed under elementary modifications, which makes it possible to study them by induction on the Euler characteristic as in Example \ref{ex-elementary}.

The Artin stack $\cP({\bf v})$ of prioritary sheaves with invariants ${\bf v}$ is smooth, essentially because $\Ext^2(E,E)=0$ for any prioritary sheaf.  There is a unique prioritary sheaf of a given slope and rank with minimal discriminant, given by a sheaf of the form $\OO_{\P^2}(a)^{\oplus k} \oplus \OO_{\P^2}(a+1)^{\oplus l}$ with the integers $a,k,l$ chosen appropriately.  Hirschowitz and Laszlo show that any connected component of $\cP({\bf v})$ contains a sheaf which is an elementary modification of another sheaf.  By induction on the Euler characteristic, they conclude that $\cP({\bf v})$ is connected, and therefore irreducible.  Since semistability is an open property, the stack $\cM({\bf v})$ of semistable sheaves is an open substack of $\cP({\bf v})$ and therefore dense and irreducible if it is nonempty.  Thus the coarse space $M({\bf v})$ is irreducible as well.

Walter \cite{Walter} gives another argument establishing the irreducibility of the moduli spaces $M_H({\bf v})$ on a Hirzebruch surface whenever they  are nonempty.  The arguments make heavy use of the ruling, and study the stack of sheaves which are prioritary with respect to the fiber class.  In more generality, he also studies the question of irreducibility on a geometrically ruled surface, at least under a condition on the polarization which ensures that semistable sheaves are prioritary with respect to the fiber class.

\subsubsection{Existence and irreducibility for K3's}\label{sssec-K3exist}
By work of Yoshioka, Mukai, and others, the existence problem has a particularly simple and beautiful solution when $(X,H)$ is a smooth K3 surface (see \cite{Yoshioka3}, or \cite{BM2,BM3} for a simple treatment).  Define the \emph{Mukai pairing} $\langle-,-\rangle$ on $K_{\num}(X)$ by $\langle {\bf v},{\bf w}\rangle = -\chi({\bf v},{\bf w})$; we can make sense of this formula by the same method as in Example \ref{ex-Chern}.  Since $X$ is a K3 surface, $K_X$ is trivial and the Mukai pairing is symmetric by Serre duality.  By Example \ref{ex-dimSurface}, if there is a stable sheaf $E$ with invariants ${\bf v}$ then the moduli space $M({\bf v})$ has dimension $2+\langle {\bf v},{\bf v}\rangle$ at $E$.  If $E$ is a stable sheaf of class ${\bf v}$ with $\langle {\bf v},{\bf v}\rangle = -2$, then $E$ is called \emph{spherical} and the moduli space $M_H({\bf v})$ is a single reduced point.

A class ${\bf v}\in K_{\num}(X)$ is called \emph{primitive} if it is not a multiple of another class.  If the polarization $H$ of $X$ is chosen suitably generically, then ${\bf v}$ being primitive ensures that there are no strictly semistable sheaves of class ${\bf v}$.  Thus, for a generic polarization, a necessary condition for the existence of a stable sheaf is that $\langle {\bf v},{\bf v}\rangle \geq -2$.

\begin{definition}
A primitive class ${\bf v}=(r,c,d)\in K_{\num}(X)$ is called \emph{positive} if $\langle {\bf v},{\bf v}\rangle \geq -2$ and either 
\begin{enumerate}
\item\label{req1} $r >0$, or
\item $r = 0$ and $c$ is effective, or
\item\label{req3} $r = 0$, $c=0$, and $d>0$.
\end{enumerate}
\end{definition}

The additional requirements (\ref{req1})-(\ref{req3}) in the definition are automatically satisfied any time there is a \emph{sheaf} of class ${\bf v}$, so they are very mild.  

\begin{theorem}\label{thm-k3exist}
Let $(X,H)$ be a smooth K3 surface.  Let ${\bf v}\in K_{\num}(X)$, and write ${\bf v} = m {\bf v}_0$, where ${\bf v}_0$ is primitive and $m$ is a positive integer.  

If ${\bf v}_0$ is positive, then the moduli space $M_H({\bf v})$ is nonempty.  If furthermore $m=1$ and the polarization $H$ is sufficiently generic, then $M_H({\bf v})$ is a smooth, irreducible, holomorphic symplectic variety.

If $M_H({\bf v})$ is nonempty and the polarization is sufficiently generic, then ${\bf v}_0$ is positive.  
\end{theorem}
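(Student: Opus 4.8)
The three assertions can be handled essentially independently, leaning on the smoothness and dimension results for K3 surfaces already in hand. I would dispatch the necessary condition (the last assertion) first, as it is the easiest: given that $M_H({\bf v})$ is nonempty, the constraints on $r$ and $c$ in the definition of ``positive'' are automatic, so only $\langle{\bf v}_0,{\bf v}_0\rangle\geq -2$ is in question. The plan is to pick a semistable sheaf $E$ of class ${\bf v}$ and pass to its Jordan-H\"older filtration; for $H$ sufficiently generic the stable factors have Mukai vector a positive rational multiple of ${\bf v}_0$, hence---${\bf v}_0$ being primitive---a class $j{\bf v}_0$ with $j\in\Z_{>0}$. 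Fixing such a factor $F$, which is stable, the vanishing $\Ext^2(F,F)_0=0$ (triviality of $K_X$) together with Example \ref{ex-dimSurface} show that $\ext^1(F,F)=2+j^2\langle{\bf v}_0,{\bf v}_0\rangle$ is the local dimension of $M_H(j{\bf v}_0)$ at $F$; being nonnegative, this forces $\langle{\bf v}_0,{\bf v}_0\rangle\geq -2/j^2\geq -2$.

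For the structural statement ($m=1$, $H$ generic), smoothness is quick: genericity of $H$ excludes strictly semistable sheaves of the primitive class ${\bf v}$, so every point of $M_H({\bf v})$ is stable, and then $\Ext^2(E,E)_0=0$ with Example \ref{ex-dimSurface} give that $M_H({\bf v})$ is smooth of the expected dimension $2+\langle{\bf v},{\bf v}\rangle$. For the symplectic form I would invoke Mukai's construction: Serre duality and the trivialization of $K_X$ provide a nondegenerate alternating pairing
$$\Ext^1(E,E)\times\Ext^1(E,E)\longrightarrow\Ext^2(E,E)\cong\C$$
on each tangent space, globalizing to a holomorphic $2$-form whose closedness is established via the Atiyah class. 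Irreducibility is the part I cannot produce by hand: I would appeal to Yoshioka's theorem that $M_H({\bf v})$ is deformation equivalent to---and, for a well-chosen polarization, birational to---the Hilbert scheme $\Hilb^n(X)$ with $n=\tfrac12\langle{\bf v},{\bf v}\rangle+1$, which is irreducible. The mechanism there is the action on $K_{\num}(X)$ of Fourier-Mukai autoequivalences of $D^b(X)$ (spherical twists and the like), which a lattice computation shows is transitive enough on primitive vectors of a fixed square to carry ${\bf v}$ onto the class $(1,0,1-n)$ of ideal sheaves of $n$ points, the transform being arranged to preserve semistability up to a change of polarization.

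For nonemptiness (the first assertion): a direct sum $E_0^{\oplus m}$ of a semistable sheaf $E_0$ of class ${\bf v}_0$ is again semistable, of class ${\bf v}$, so it is enough to treat the primitive case, and a standard reduction allows $H$ to be assumed generic. Here I would once more use a Fourier-Mukai transform to move ${\bf v}_0$ to a class of evident nonemptiness---$(1,0,1-n)$, realized by ideal sheaves of $n\geq0$ points when $\langle{\bf v}_0,{\bf v}_0\rangle=2n-2$, or a rank-zero class $(0,c,\chi)$ with $c$ effective, realized by line bundles on curves in $|c|$---having first checked by a lattice argument that every positive primitive class lies in the orbit of such a base class. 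The main obstacle, both here and in the irreducibility step, is controlling how semistability behaves under these Fourier-Mukai equivalences: this is precisely what forces the delicate polarization bookkeeping in \cite{Yoshioka3}, and what the Bridgeland-stability treatment of \cite{BM2,BM3} reorganizes into a wall-and-chamber analysis on the stability manifold.
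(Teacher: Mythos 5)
The paper does not prove this theorem: it is quoted as a known result, with the reader directed to \cite{Yoshioka3} for the original argument and to \cite{BM2,BM3} for a modern treatment, so there is no in-paper proof to compare against. Your outline is a faithful reconstruction of the strategy in those references, and each of the three pieces is organized correctly: the necessity argument via Jordan--H\"older factors of class $j{\bf v}_0$ and the inequality $\ext^1(F,F)=2+j^2\langle{\bf v}_0,{\bf v}_0\rangle\geq 0$ is exactly right (and matches the paper's own remark that conditions (1)--(3) of positivity are automatic for sheaves); smoothness from the absence of strictly semistable sheaves plus $\Ext^2(E,E)_0=0$, and the symplectic form from Mukai's Serre-duality pairing, are standard and correct. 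What you should be explicit about is that the three steps you defer --- (i) transitivity of the Fourier--Mukai/autoequivalence action on positive primitive Mukai vectors of fixed square, (ii) preservation of (semi)stability under these equivalences up to a controlled change of polarization, and (iii) the passage from a generic polarization back to an arbitrary one (note that $\mu_{H'}$-stability for $H'$ near $H$ only yields $\mu_H$-semistability, not $H$-Gieseker semistability, so this step needs Langton-type or wall-crossing input) --- are not routine bookkeeping but constitute essentially the entire content of \cite{Yoshioka3}; your sketch correctly identifies them as the obstacles but does not supply them, so as written it is an accurate roadmap rather than a proof.
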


The Mukai pairing can be made particularly simple from a computational standpoint by studying it in terms of a different coordinate system.  Let $$H^*_\alg(X) = H^0(X,\Z)\oplus \NS(X) \oplus H^4(X,\Z).$$ Then there is an isomorphism $v:K_{\num}(X)\to H^*_{\alg}(X,\Z)$ defined by $v({\bf v}) = {\bf v}\cdot \sqrt{\td(X)}.$ The vector $v({\bf v})$ is called a \emph{Mukai vector}.  The Todd class $\td(X)\in H_\alg^*(X)$ is  $(1,0,2)$, so $\sqrt{\td(X)} = (1,0,1)$ and $$v({\bf v}) = (\ch_0({\bf v}),\ch_1({\bf v}),\ch_0({\bf v})+\ch_2({\bf v}))=(r,c_1,r+\frac{c_1^2}{2}-c_2).$$ Suppose ${\bf v},{\bf w}\in K_{\num}(X)$ have Mukai vectors $v({\bf v}) = (r,c,s)$, $v({\bf w}) = (r',c',s')$.  Since $\sqrt{\td(X)}$ is self-dual, the Hirzebruch-Riemann-Roch theorem gives $$\langle {\bf v},{\bf w}\rangle = -\chi({\bf v},{\bf w}) = -\int_X {\bf v}^*\cdot {\bf w} \cdot \td(X) = -\int_X (r,-c,s)\cdot (r',c',s') = cc'-rs'-r's.$$

It is worth pointing out that Theorem \ref{thm-k3exist} can also be stated as a strong Bogomolov inequality, as in the Dr\'ezet-Le Potier result for $\P^2$.  Let ${\bf v}_0$ be a primitive vector which is the vector of a coherent sheaf.  The irregularity of $X$ is $q(X)=0$ and $\chi(\OO_X)=2$, so as in Example \ref{ex-dimSurface} $$\langle {\bf v}_0,{\bf v}_0 \rangle  = 2r^2\Delta({\bf v}_0) +2(1-r^2)-2 = 2r^2(\Delta({\bf v}_0)-1).$$ Therefore, ${\bf v}_0$ is positive and non-spherical if and only if $\Delta({\bf v}_0)\geq 1$.

\subsubsection{General surfaces}\label{ssec-Hilb}
On an arbitrary smooth surface $(X,H)$ the basic geometry of the moduli space is less understood.  To obtain good results, it is necessary to impose some kind of additional hypotheses on the Chern character ${\bf v}$.

For one possibility, we can take ${\bf v}$ to be the character of an ideal sheaf $I_Z$ of a zero-dimensional scheme $Z\subset X$ of length $n$.  Then the moduli space of sheaves of class ${\bf v}$ with determinant $\OO_X$ is the \emph{Hilbert scheme} of $n$ points on $X$, written $X^{[n]}$.  It parameterizes ideal sheaves of subschemes $Z\subset X$ of length $n$.  

\begin{remark}
Note that any rank $1$ torsion-free sheaf $E$ with determinant $\OO_X$ admits an inclusion $E\to E^{**} := \det E  =\OO_X$, so that $E$ is actually an ideal sheaf.  Unless $X$ has irregularity $q(X) = 0$, the Hilbert scheme $X^{[n]}$ and moduli space $M({\bf v})$ will differ, since the latter space also contains sheaves of the form $L\te I_Z$, where $L$ is a line bundle numerically equivalent to $\OO_X$.  In fact, $M({\bf v}) \cong X^{[n]}\times \Pic^0(X)$.
\end{remark}

Classical results of Fogarty show that Hilbert schemes of points on a surface are very well-behaved.

\begin{theorem}[\cite{Fogarty1}]
The Hilbert scheme of points $X^{[n]}$ on a smooth surface $X$ is smooth and irreducible.  It is a fine moduli space, and carries a universal ideal sheaf.
\end{theorem}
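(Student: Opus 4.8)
The plan is to prove the four assertions in the order: (a) $X^{[n]}$ exists as a projective scheme, is a fine moduli space, and carries a universal ideal sheaf; (b) its Zariski tangent space has dimension exactly $2n$ at every point; (c) every length-$n$ subscheme of $X$ is a flat limit of $n$ distinct reduced points, so $X^{[n]}$ has local dimension at least $2n$ everywhere; and (d) combine (b) and (c). Only step (c) is genuinely special to surfaces: (a) is Grothendieck's general machine, and (b) is a homological computation that happens to close up correctly when $\dim X = 2$. Step (a) requires no work special to our setting: by Grothendieck's construction, the functor sending $S$ to the set of closed subschemes $\mathcal Z\subset X\times S$ that are finite flat of degree $n$ over $S$ is represented by a projective scheme $\Hilb^n(X)=:X^{[n]}$ (a closed subscheme of a Grassmannian of quotients of $H^0(X,\OO_X(m))$ for $m\gg 0$). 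The identity morphism of $X^{[n]}$ corresponds to a universal subscheme $\mathcal Z\subset X\times X^{[n]}$, flat of degree $n$, and its ideal sheaf $I_{\mathcal Z}$ is the universal ideal sheaf; by the remark preceding the theorem this also realizes $X^{[n]}$ as the fine moduli space of rank-one torsion-free sheaves with trivial determinant and second Chern class $n$.

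For step (b), recall $T_{[Z]}X^{[n]}\cong H^0(Z,N_{Z/X})=\Hom_{\OO_X}(I_Z,\OO_Z)$. Apply $\Hom_{\OO_X}(-,\OO_Z)$ to $0\to I_Z\to\OO_X\to\OO_Z\to 0$: since $\OO_Z$ is supported on a finite set, $\Ext^i_{\OO_X}(\OO_X,\OO_Z)=H^i(X,\OO_Z)=0$ for $i>0$, and $\dim_\C\Hom_{\OO_X}(\OO_Z,\OO_Z)=n$ because every $\OO_X$-endomorphism of $\OO_Z$ is multiplication by a global section of $\OO_Z$. The natural map $\Hom(\OO_Z,\OO_Z)\to\Hom(\OO_X,\OO_Z)=H^0(\OO_Z)$ is an injection between spaces of dimension $n$, hence an isomorphism, so the long exact sequence gives $\Hom(I_Z,\OO_Z)\cong\Ext^1_{\OO_X}(\OO_Z,\OO_Z)$. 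Now on the smooth surface $X$ one has $\pd\OO_Z=2$ (Auslander--Buchsbaum), so $\Ext^{>2}(\OO_Z,\OO_Z)=0$; Serre duality gives $\Ext^2(\OO_Z,\OO_Z)\cong\Hom(\OO_Z,\OO_Z\otimes\omega_X)^\ast$, which has dimension $n$ since $\omega_X$ is trivial near $\Supp Z$; and Hirzebruch--Riemann--Roch gives $\chi(\OO_Z,\OO_Z)=0$ because $\ch(\OO_Z)$ is concentrated in top degree. Adding up, $\dim\Ext^1(\OO_Z,\OO_Z)=2n$, hence $\dim_\C T_{[Z]}X^{[n]}=2n$ for every $Z$, and in particular $\dim_{[Z]}X^{[n]}\le 2n$ everywhere. (On a threefold this breaks down precisely because $\pd\OO_Z=3$ and the extra $\Ext$-term is uncontrolled, which is why Hilbert schemes of points in higher dimension fail to be smooth or equidimensional.)

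Step (c) is the heart of the matter. Let $X^{[n]}_{\mathrm{red}}\subset X^{[n]}$ be the open locus of reduced subschemes; it is isomorphic to $(X^n\setminus\bigcup_{i<j}\{x_i=x_j\})/S_n$, hence smooth and irreducible of dimension $2n$. I claim every $[Z]$ lies in the closed set $\overline{X^{[n]}_{\mathrm{red}}}$, i.e.\ $Z$ is smoothable. By separating the support it suffices to treat the punctual case, where $Z$ is supported at a single point $p$. Here one uses the structure of the punctual Hilbert scheme $X^{[n]}_p$: by Brian\c{c}on's theorem it is irreducible, and its generic point is curvilinear, of the form $\Spec\C[t]/(t^n)$ contained in a smooth curve germ through $p$. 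A curvilinear subscheme is obviously smoothable --- slide its $n$ points apart along the curve --- so the generic point of $X^{[n]}_p$ lies in $\overline{X^{[n]}_{\mathrm{red}}}$, and therefore so does all of $X^{[n]}_p$ by irreducibility. Gluing disjoint smoothing families at the points of $\Supp Z$ then handles the general $Z$.

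Finally, step (d): for any $[Z]$ we now have $2n=\dim\overline{X^{[n]}_{\mathrm{red}}}\le\dim_{[Z]}X^{[n]}\le\dim_\C T_{[Z]}X^{[n]}=2n$, so $X^{[n]}$ is smooth at $[Z]$ of dimension $2n$; as $[Z]$ was arbitrary, $X^{[n]}$ is smooth of pure dimension $2n$, and since every point lies in $\overline{X^{[n]}_{\mathrm{red}}}$ we get $X^{[n]}=\overline{X^{[n]}_{\mathrm{red}}}$, which is irreducible because $X^{[n]}_{\mathrm{red}}$ is (alternatively, smoothness plus Hartshorne's connectedness theorem for Hilbert schemes gives irreducibility). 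The main obstacle is step (c): the tangent-space bound of step (b) alone does not force smoothness, because the obstruction space $\Ext^1_{\OO_X}(I_Z,\OO_Z)$ is genuinely nonzero (in fact of dimension $n$), so one must exhibit the $2n$-dimensional family of deformations by hand, and doing so honestly runs through Brian\c{c}on's structure theorem for the punctual Hilbert scheme (or Fogarty's original inductive argument) --- that is where the real content lies.
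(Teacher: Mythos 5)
The paper offers no proof of this statement --- it is quoted directly from Fogarty's 1968 paper --- so there is no in-text argument to measure yours against, and I will evaluate the proposal on its own terms. Your argument is correct and is the standard modern proof. Steps (a), (b) and (d) are fine as written: the identification $T_{[Z]}X^{[n]}\cong\Hom(I_Z,\OO_Z)\cong\Ext^1(\OO_Z,\OO_Z)$ and the count $\ext^1(\OO_Z,\OO_Z)=\ext^0+\ext^2-\chi=n+n-0=2n$ check out, as does your diagnosis of why the computation is special to surfaces and why the tangent-space bound alone cannot give smoothness (the obstruction space $\Ext^1(I_Z,\OO_Z)\cong\Ext^2(\OO_Z,\OO_Z)$ really is $n$-dimensional). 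The one place the proof leans on an external pillar is step (c): you derive smoothability from Brian\c{c}on's theorem that the punctual Hilbert scheme is irreducible with dense curvilinear locus. The logic is sound --- curvilinear schemes are visibly smoothable, the smoothable locus is closed, and irreducibility of each punctual fibre then sweeps up everything --- but Brian\c{c}on's theorem postdates Fogarty by a decade and is at least as deep as the statement being proved, so as a self-contained argument this step is a genuine black box; you say as much yourself, which is the right instinct. Fogarty's own route is different: he never uses irreducibility of the punctual Hilbert scheme, instead combining essentially the same tangent-space computation with the connectedness of $X^{[n]}$ and an inductive argument to exclude extra components. Your route buys conceptual transparency with the single hard input isolated and named; Fogarty's buys self-containedness (and validity over an arbitrary base field, whereas Brian\c{c}on's original argument is analytic over $\C$). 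Two cosmetic points: you should assume $X$ irreducible, since otherwise $X^{[n]}$ is disconnected; and in the Serre duality step the correct justification is that $\omega_X|_Z$ is trivial because every line bundle on an Artinian scheme is trivial, not that $\omega_X$ is trivial near $\Supp Z$.
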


At the other extreme, if the rank is arbitrary then there are \emph{O'Grady-type} results which show that the moduli space has many good properties if we require the discriminant of our sheaves to be sufficiently large. 

\begin{theorem}[\cite{HuybrechtsLehn,OGrady}]
There is a constant $C$ depending on $X,H$, and $r$, such that if ${\bf v}$ has rank $r$ and $\Delta({\bf v}) \geq C$ then the moduli space $M_H({\bf v})$ is nonempty, irreducible, and normal.  The $\mu$-stable sheaves $E$ such that $\ext^2(E,E)_0=0$ are dense in $M_H({\bf v})$, so $M_H({\bf v})$ has the expected dimension $$\dim M_H({\bf v}) = 2r^2\Delta(E)+\chi(\OO_X)(1-r^2)+q(X).$$
\end{theorem}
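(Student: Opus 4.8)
The plan is to prove the three conclusions — nonemptiness, irreducibility, and normality — in that order, with density of the ``good'' open locus $M^0 \subset M_H(\mathbf v)$ of $\mu_H$-stable \emph{locally free} sheaves $E$ with $\ext^2(E,E)_0 = 0$ serving as the common engine, following the strategy of O'Grady. Rather than argue directly on the GIT quotient, I would first set up a convenient smooth parameter stack: the Artin stack of torsion-free sheaves of class $\mathbf v$ whose Harder--Narasimhan factors have $\mu_H$-slopes confined to a fixed bounded interval — a ``prioritary-type'' relaxation in the spirit of Examples \ref{ex-slopestability} and \ref{ex-elementary}, chosen so that $\Ext^2(E,E) = 0$ identically on it, hence so that the stack is smooth and its dimension is the expected one. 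Nonemptiness of $M_H(\mathbf v)$ then follows the recipe of Example \ref{ex-existence}: exhibit one $\mu_H$-stable bundle of the prescribed rank and slope realizing the minimal discriminant allowed by the Bogomolov inequality (Theorem \ref{thm-bogomolov}), and then raise the discriminant one unit at a time by elementary modifications, which preserve $\mu$-stability by Example \ref{ex-elementary}; checking that a general such modification still has $\ext^2_0 = 0$ shows $M^0 \neq \emptyset$ as soon as $\Delta(\mathbf v) \geq C$.

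The technical heart — and the step I expect to be the main obstacle — is O'Grady's codimension estimates for the ``bad'' loci inside this stack: the locus of sheaves that are not locally free (stratified by the isomorphism type of the double dual $E^{**}$, which has strictly smaller discriminant, together with the choice of the zero-dimensional quotient $E^{**}/E$); the locus of sheaves that are not $\mu_H$-stable (stratified by Harder--Narasimhan and Jordan--H\"older type); and the locus where $\ext^2(E,E)_0 \neq 0$, i.e. $\hom(E, E \otimes K_X)_0 \neq 0$. In each case one must bound the codimension from below by a quantity that grows with $\Delta(\mathbf v)$ — morally because increasing the discriminant enlarges the ``interior'' faster than the ``boundary'' — uniformly over all combinatorial types. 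Choosing $C$ so that for $\Delta(\mathbf v) \geq C$ all three bad loci have codimension at least $2$ (in fact arbitrarily large) is precisely the delicate part of the argument, and is what the constant $C$ in the statement encodes.

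Granting the codimension estimates, irreducibility follows by induction on $\Delta(\mathbf v)$. It suffices to show the smooth dense open subset $M^0$ is connected. The elementary-modification construction realizes sheaves of class $\mathbf v$ as elementary modifications of sheaves of class $\mathbf v'$ with $\Delta(\mathbf v') = \Delta(\mathbf v) - 1$; the incidence locus of such modifications is irreducible, since it fibers over $X \times M_H(\mathbf v')$ with fiber over $(p, E')$ the projectivization of the space of surjections $E' \to \OO_p$, and it dominates $M_H(\mathbf v)$ away from loci of positive codimension. Thus irreducibility of $M_H(\mathbf v')$ propagates upward; the base case (with $\Delta$ just past $C$) is handled directly, the parameter space of Example \ref{ex-existence} being visibly irreducible there. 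Since the generic point of $M^0$ is a stable bundle with $\ext^2_0 = 0$, Theorem \ref{thm-smoothness} gives that $M_H(\mathbf v)$ is generically smooth of the expected dimension $2r^2\Delta(\mathbf v) + \chi(\OO_X)(1 - r^2) + q(X)$.

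Finally, normality follows from Serre's criterion $R_1 + S_2$. The condition $R_1$ is immediate from the codimension estimates: the singular locus of $M_H(\mathbf v)$ is contained in the union of the non-$\mu_H$-stable locus and $\{\ext^2(E,E)_0 \neq 0\}$, both of codimension at least $2$. For $S_2$ one works locally, using that near any point $M_H(\mathbf v)$ is a good quotient of (an open subset of) a Quot scheme, and that for $\Delta(\mathbf v) \geq C$ this parameter scheme is itself irreducible, generically smooth, and Cohen--Macaulay of the expected dimension — again a consequence of the same codimension estimates together with the deformation theory of Theorem \ref{thm-smoothness} — since a good quotient of a normal variety is normal. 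The density statement for $\mu_H$-stable sheaves with $\ext^2_0 = 0$, and hence the dimension formula, is exactly the assertion that $M^0$ is dense, which was established along the way.
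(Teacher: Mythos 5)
The paper does not prove this theorem; it is quoted from \cite{HuybrechtsLehn} and \cite{OGrady}, so your outline can only be measured against the arguments there. Your overall architecture --- codimension estimates for the non-locally-free, non-$\mu$-stable, and obstructed loci in a parameter space, nonemptiness by elementary modifications starting from a seed bundle, irreducibility by induction on $\Delta$ through the incidence variety of modifications, and normality via Serre's criterion on the parameter scheme followed by descent through the good quotient --- is indeed the O'Grady/Huybrechts--Lehn strategy in broad strokes.

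However, your opening move fails on a general surface. You propose a smooth parameter stack of sheaves whose Harder--Narasimhan factors have slopes in a bounded interval, ``chosen so that $\Ext^2(E,E)=0$ identically.'' By Serre duality $\Ext^2(E,E)\cong \Hom(E,E\te K_X)^*$, and no slope condition can force this to vanish unless $H.K_X<0$: on a K3 surface $\Ext^2(E,E)\cong \Hom(E,E)^*\neq 0$ for every sheaf, and on any surface with $h^0(K_X)>0$ multiplication by a nonzero section of $K_X$ gives a nonzero element of $\Hom(E,E\te K_X)$ for every torsion-free $E$. This is precisely why the theorem is stated with the traceless condition $\ext^2(E,E)_0=0$, and why even that vanishing holds only generically for $\Delta\gg 0$. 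Your sketch is also internally inconsistent on this point: if $\Ext^2$ vanished identically on your stack, the ``$\ext^2(E,E)_0\neq 0$'' bad locus you estimate two paragraphs later would be empty. The prioritary-stack shortcut is special to $\P^2$ and to ruled and del Pezzo surfaces (\cite{HirschowitzLaszlo}, \cite{Walter}), exactly the cases the survey treats separately; for the O'Grady theorem one must work with a Quot scheme that is \emph{not} smooth and prove that the obstructed locus has codimension growing with $\Delta$. That estimate --- bounding $\hom(E,E\te K_X)_0$ in terms of $\Delta$ uniformly over all double-dual and filtration types --- is the real technical core, and it also has to carry your normality step: Cohen--Macaulayness of the parameter scheme comes from exhibiting it as a local complete intersection, which requires its dimension to equal the expected one at \emph{every} point, not merely at the generic one.
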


\section{Divisors and classical birational geometry}\label{sec-classical}

In this section we introduce some of the primary objects of study in the birational geometry of varieties.  We then study some simple examples of the birational geometry of moduli spaces from the classical point of view.

\subsection{Cones of divisors}  Let $X$ be a normal projective variety.  Recall that $X$ is \emph{factorial} if every Weil divisor on $X$ is Cartier, and $\Q$-factorial if every Weil divisor has a multiple that is Cartier.  To make the discussion in this section easier we will assume that $X$ is $\Q$-factorial.  This means that describing a codimension $1$ locus on $X$ determines the class of a $\Q$-Cartier divisor.

\begin{definition}
Two Cartier divisors $D_1,D_2$ (or $\Q$- or $\R$-Cartier divisors) are \emph{numerically equivalent}, written $D_1 \equiv D_2$,  if $D_1\cdot C = D_2 \cdot C$ for every curve $C\subset X$.  The \emph{Neron-Severi space} $N^1(X)$ is the real vector space $\Pic(X)\te \R/\equiv$.
\end{definition}

\subsubsection{Ample and nef cones}The first object of study in birational geometry is the \emph{ample cone} $\Amp(X)$ of $X$.  Roughly speaking, the ample cone parameterizes the various projective embeddings of $X$.  A Cartier divisor $D$ on $X$ is \emph{ample} if the map to projective space determined by $\OO_X(mD)$ is an embedding for sufficiently large $m$.  The Nakai-Moishezon criterion for ampleness says that $D$ is ample if and only if $D^{\dim V}.V >0$ for every subvariety $V\subset X$.  In particular, ampleness only depends on the numerical equivalence class of $D$.  A positive linear combination of ample divisors is also ample, so it is natural to consider the cone spanned by ample classes.

\begin{definition}
The \emph{ample cone} $\Amp(X)\subset N^1(X)$ is the open convex cone spanned by the numerical classes of ample Cartier divisors.

An $\R$-Cartier divisor $D$ is ample if its numerical class is in the ample cone.
\end{definition}

From a practical standpoint it is often easier to work with \emph{nef} (i.e. \emph{numerically effective}) divisors instead of ample divisors.  We say that a Cartier divisor $D$ is nef if $D.C\geq 0$ for every curve $C\subset X$.  This is clearly a numerical condition, so nefness extends easily to $\R$-divisors and they span a cone $\Nef(X)$, the \emph{nef cone} of $X$.  By Kleiman's theorem, the problems of studying ample or nef cones are essentially equivalent.

\begin{theorem}[{\cite[Theorem 1.27]{Debarre}}]
The nef cone is the closure of the ample cone, and the ample cone is the interior of the nef cone:
$$ \Nef(X) = \overline{\Amp(X)} \qquad \mbox{and} \qquad \Amp(X) = \Nef(X)^{\circ} .$$
\end{theorem}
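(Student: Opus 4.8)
The plan is to reduce both equalities to three facts --- two elementary, and one (the real content) resting on the Nakai--Moishezon criterion: (i) $\Nef(X)$ is a closed cone; (ii) $\Amp(X)\subseteq\Nef(X)$; and (iii) if $N$ is a nef class and $A$ an ample class, then $N+A$ is ample. Granting these, the rest is formal convex geometry. From (i) and (ii), together with the fact that $\Amp(X)$ is open, we get $\overline{\Amp(X)}\subseteq\Nef(X)$ and $\Amp(X)\subseteq\Nef(X)^{\circ}$. For the reverse inclusions fix once and for all an ample class $A$. If $D$ is nef then $D+\tfrac{1}{n}A$ is ample for every $n\geq 1$ by (iii), and letting $n\to\infty$ shows $D\in\overline{\Amp(X)}$; hence $\Nef(X)=\overline{\Amp(X)}$. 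If $D\in\Nef(X)^{\circ}$ then, $D$ being an interior point, $D-\varepsilon A$ is still nef for all sufficiently small $\varepsilon>0$, so $D=(D-\varepsilon A)+\varepsilon A$ is ample by (iii); hence $\Nef(X)^{\circ}\subseteq\Amp(X)$, and combining with the previous sentence, $\Amp(X)=\Nef(X)^{\circ}$.

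Facts (i) and (ii) are immediate. For (i), the condition ``$D\cdot C\geq 0$ for every curve $C$'' defining nefness is an intersection of closed half-spaces in $N^1(X)$, so $\Nef(X)$ is closed (and visibly a cone). For (ii), if $D$ is ample then Nakai--Moishezon gives $D^{\dim V}\cdot V>0$ for every subvariety $V$; specializing to $V=C$ a curve gives $D\cdot C>0$, so $D$ is nef.

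The content of the theorem is therefore fact (iii), which I would prove via Nakai--Moishezon: setting $L=N+A$ with $N$ nef and $A$ ample, I must show $L^{d}\cdot V>0$ for every $d$-dimensional subvariety $V$. Expanding $L^{d}\cdot V=\sum_{i=0}^{d}\binom{d}{i}\,N^{i}A^{d-i}\cdot V$, the $i=0$ term equals $A^{d}\cdot V>0$ by Nakai--Moishezon for $A$, so it suffices to see that each mixed term $N^{i}A^{d-i}\cdot V$ is $\geq 0$. Replacing $A$ by a large very ample multiple (harmless after rescaling) and intersecting $V$ with $d-i$ general members of $|A|$, this reduces to Kleiman's positivity statement: for a nef class $N$ and any $i$-dimensional subvariety $Z$ one has $N^{i}\cdot Z\geq 0$. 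I would establish this by induction on $i$; the base case $i=1$ is the definition of nef, and for the inductive step, after replacing $X$ by $Z$ one fixes a very ample $H$ and studies the polynomial $P(t)=(N+tH)^{i}\cdot Z$, using the inductive hypothesis applied to the lower-dimensional cycles cut on $Z$ by members of $|H|$, together with the elementary fact that $N+tH$ is very ample for $t\gg 0$ (so that $N$ pairs nonnegatively with the effective curve class $(N+tH)^{i-1}$), to rule out $P(0)=N^{i}\cdot Z<0$. The main obstacle is exactly this inductive positivity step: organizing the polynomial-in-$t$ argument and keeping track of the effectivity of the intermediate cycles is the technical heart of Kleiman's theorem, whereas the passage from (i)--(iii) to the two stated equalities is pure bookkeeping with cones.
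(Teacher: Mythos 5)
The paper does not prove this theorem---it is quoted from the cited reference---so the only meaningful comparison is with the standard proof of Kleiman's theorem given there, and your proposal is essentially that proof: reduce everything to the single nontrivial fact that nef plus ample is ample, and establish that via Nakai--Moishezon together with Kleiman's inequality $N^i\cdot Z\ge 0$, proved by induction on $\dim Z$ using the polynomial $(N+tH)^i\cdot Z$. Two places in your sketch need tightening. First, the theorem concerns cones of $\R$-classes, while Nakai--Moishezon as stated in the paper applies to integral Cartier divisors; deducing amplitude of an arbitrary real class $N+A$ from $(N+A)^{\dim V}\cdot V>0$ is the much harder Campana--Peternell theorem, which you should not need. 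The standard remedy is to prove your fact (iii) for $\Q$-classes first, and then treat real classes by writing the ample summand as a positive combination of integral ample classes spanning $N^1(X)$ and absorbing a small rational perturbation of $N$ into it. Second, in the inductive step of Kleiman's inequality the contradiction cannot be extracted from the behavior of $P(t)=(N+tH)^i\cdot Z$ for $t\gg 0$, as your parenthetical suggests---that only recovers the positivity of the leading coefficient, which you already know. Assuming $P(0)<0$, one must let $t_0>0$ be the largest root of $P$, show that $N+tH$ is ample on $Z$ for every rational $t>t_0$ by applying Nakai--Moishezon on $Z$ together with the inductive hypothesis on proper subvarieties, and then derive $P(t_0)>0$ from the splitting $P(t)=N\cdot(N+tH)^{i-1}\cdot Z+tH\cdot(N+tH)^{i-1}\cdot Z$, the first term being nonnegative because $(N+tH)^{i-1}\cdot Z$ is a limit of effective curve classes and $N$ is nef, and the second being strictly positive. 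Neither point alters your architecture, which is the correct and standard one.
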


Nef divisors are particularly important in birational geometry because they record the behavior of the simplest nontrivial morphisms to other projective varieties, as the next example shows.

\begin{example}\label{ex-nefpullback}
Suppose $f:X\to Y$ is any morphism of projective varieties.  Let $L$ be a very ample line bundle on $Y$, and consider the line bundle $f^*L$.  If $C\subset X$ is any irreducible curve, we can find an effective divisor $D\subset Y$ representing $L$ such that the image of $C$ is not contained entirely in $D$.  This implies $C.(f^*L) \geq 0$, so $f^*L$ is nef.  Note that if $f$ contracts some curve $C\subset X$ to a point, then $C.(f^*L)=0$, so $f^*L$ is on the boundary of the nef cone.

As a partial converse, suppose $D$ is a nef divisor on $X$ such that the linear series $|mD|$ is base point free for some $m>0$; such a divisor class is called \emph{semiample}.  Then for sufficiently large and divisible $m$, the image of the map $\phi_{|mD|}:X\to |mD|^*$ is a projective variety $Y_m$ carrying an ample line bundle $L$ such that $\phi_{|mD|}^* L = \OO_X(mD).$  See \cite[Theorem 2.1.27]{Lazarsfeld} for details and a more precise statement.
\end{example}

\begin{example}\label{ex-nefCompute}
Classically, to compute the nef (and hence ample) cone of a variety $X$ one typically first constructs a subcone $\Lambda \subset \Nef(X)$ by finding divisors $D$ on the boundary arising from interesting contractions $X\to Y$ as in Example \ref{ex-nefpullback}.  One then dually constructs interesting curves $C$ on $X$ to span a cone $\Nef(X)\subset \Lambda'$ given as the divisors intersecting the curves nonnegatively.  If enough divisors and curves are constructed so that $\Lambda = \Lambda'$, then they equal the nef cone.

One of the main features of the positivity lemma of Bayer and Macr\`i will be that it produces nef divisors on moduli spaces of sheaves $M$ without having to worry about finding a map $M\to Y$ to a projective variety giving rise to the divisor.  A priori these nef divisors may not be semiample or have sections at all, so it may or may not be possible to construct these divisors and prove their nefness via more classical constructions.  See \S\ref{sec-positivity} for more details.
\end{example}

\begin{example}
For an easy example of the procedure in Example \ref{ex-nefCompute}, consider the blowup $X = \Bl_p \P^2$ of $\P^2$ at a point $p$.  Then $\Pic X \cong \Z H\oplus \Z E$, where $H$ is the pullback of a line under the map $\pi:X\to \P^2$ and $E$ is the exceptional divisor.  The Neron-Severi space $N^1(X)$ is the two-dimensional real vector space spanned by $H$ and $E$.  Convex cones in $N^1(X)$ are spanned by two extremal classes.

Since $\pi$ contracts $E$, the class $H$ is an extremal nef divisor.
We also have a fibration $f: X\to \P^1$, where the fibers are the proper transforms of lines through $p$.  The pullback of a point in $\P^1$ is of class $H-E$, so $H-E$ is an extremal nef divisor.  Therefore $\Nef(X)$ is spanned by $H$ and $H-E$.
\end{example}

\subsubsection{(Pseudo)effective and big cones}  The easiest interesting space of divisors to define is perhaps the \emph{effective cone} $\Eff(X)  \subset N^1(X)$, defined as the subspace spanned by numerical classes of effective divisors.  Unlike nefness and ampleness, however, effectiveness is \emph{not} a numerical property: for instance, on an elliptic curve $C$, a line bundle of degree $0$ has an effective multiple if and only if it is torsion. 

The effective cone is in general neither open nor closed.  Its closure $\overline\Eff(X)$ is less subtle, and called the \emph{pseudo-effective cone}.  The interior of the effective cone is the \emph{big cone} $\Bigc(X)$, spanned by divisors $D$ such that the linear series $|mD|$ defines a map $\phi_{|mD|}$ whose image has the same dimension as $X$.  Thus, big divisors are the natural analog of birational maps.  By Kodaira's Lemma \cite[Proposition 2.2.6]{Lazarsfeld}, bigness is a numerical property.

\begin{example}
The strategy for computing pseudoeffective cones is typically similar to that for computing nef cones.  On the one hand, one constructs effective divisors to span a cone $\Lambda\subset \overline\Eff(X)$.  A \emph{moving curve} is a numerical curve class $[C]$ such that irreducible representatives of the class pass through a general point of $X$.  Thus if $D$ is an effective divisor we must have $D.C\geq 0$; otherwise $D$ would have to contain every irreducible curve of class $C$.  Thus the moving curve classes dually determine a cone $\overline{\Eff}(X) \subset \Lambda'$, and if $\Lambda=\Lambda'$ then they equal the pseudoeffective cone.  This approach is justified by the seminal work of Boucksom-Demailly-P\u aun-Peternell, which establishes a duality between the pseudoeffective cone and the cone of moving curves \cite{BDPP}.
\end{example}

\begin{example}
On $X = \Bl_p\P^2$, the curve class $H$ is moving and $H.E = 0$.  Thus $E$ spans an extremal edge of $\Eff(X)$.  The curve class $H-E$ is also moving, and $(H-E)^2 = 0$.  Therefore $H-E$ spans the other edge of $\Eff(X)$, and $\Eff(X)$ is spanned by $H-E$ and $E$.
\end{example}

\subsubsection{Stable base locus decomposition}

The nef cone $\Nef(X)$ is one chamber in a decomposition of the entire pseudoeffective cone $\overline\Eff(X)$.  By the \emph{base locus} $\Bs(D)$ of a divisor $D$ we mean the base locus of the complete linear series $|D|$, regarded as a subset (i.e. not as a subscheme) of $X$.  By convention, $\Bs(D) = X$ if $|D|$ is empty.  The \emph{stable base locus} of $D$ is the subset $$\BBs(D) = \bigcap_{m >0} \Bs(D)$$ of $X$.  One can show that $\BBs(D)$ coincides with the base locus $\Bs (mD)$ of sufficiently large and divisible multiples $mD.$

\begin{example}
The base locus and stable base locus of $D$ depend on the class of $D$ in $\Pic(X)$, not just on the numerical class of $D$.  For example, if $L$ is a degree $0$ line bundle on an elliptic curve $X$, then $\Bs(L) = X$ unless $L$ is trivial, and $\BBs(L)=X$ unless $L$ is torsion in $\Pic(X)$.
\end{example}

Since (stable) base loci do not behave well with respect to numerical equivalence, for the rest of this subsection we assume $q(X) = 0$ so that linear and numerical equivalence coincide and $N^1(X)_\Q = \Pic(X)\te \Q$.  Then the pseudoeffective cone $\overline{\Eff}(X)$ has a wall-and-chamber decomposition where the stable base locus remains constant on the open  chambers.  These various chambers control the birational maps from $X$ to other projective varieties.  For example, if $f:X\dashrightarrow Y$ is the rational map given by a sufficiently divisible multiple $|mD|$, then the indeterminacy locus of the map is contained in the stable base locus.

\begin{example}
Stable base loci decompositions are typically computed as follows.  First, one constructs effective divisors in a multiple $|mD|$ and takes their intersection to get a variety $Y$ with $\BBs(D) \subset Y$.  In the other direction, one looks for curves $C$ on $X$ such that $C.D <0$.  Then any divisor of class $mD$ must contain $C$, so $\BBs(D)$ contains every curve numerically equivalent to $C$.
\end{example}

When the Picard rank of $X$ is two, the chamber decompositions can often be made very explicit.  In this case it is notationally conventient to write, for example, $(D_1,D_2]$ to denote the cone of divisors of the form $a_1D_1+a_2D_2$ with $a_1>0$ and $a_2\geq 0$.

\begin{example}
Let $X = \Bl_p \P^2$.  The nef cone is $[H,H-E]$, and both $H,H-E$ are basepoint free.  Thus the stable base locus is empty in the closed chamber $[H,H-E]$. If $D\in (H,E]$ is an effective divisor, then $D.E <0$, so $D$ contains $E$ as a component.  The stable base locus of divisors in the chamber $(H,E]$ is $E$.
\end{example}

We now begin to investigate the birational geometry of some of the simplest moduli spaces of sheaves on surfaces from a classical point of view.

\subsection{Birational geometry of Hilbert schemes of points}  Let $X$ be a smooth surface with irregularity $q(X)=0$, and let ${\bf v}$ be the Chern character of an ideal sheaf $I_Z$ of a collection $Z$ of $n$ points.  Then $M({\bf v})$ is the Hilbert scheme $X^{[n]}$ of $n$ points on $X$, parameterizing zero-dimensional schemes of length $n$.  See \S\ref{ssec-Hilb} for its basic properties.

\subsubsection{Divisor classes} Divisor classes on the Hilbert scheme $X^{[n]}$ can be understood entirely in terms of the birational \emph{Hilbert-Chow morphism} $h:X^{[n]}\to X^{(n)}$ to the symmetric product $X^{(n)} = \Sym^n X$.  Informally, this map sends the ideal sheaf of $Z$ to the sum of the points in $Z$, with multiplicities given by the length of the scheme at each point.

\begin{remark}
The symmetric product $X^{(n)}$ can itself be viewed as the moduli space of $0$-dimensional sheaves with Hilbert polynomial $P(m)=n$.  Suppose $E$ is a zero-dimensional sheaf with constant Hilbert polynomial $\ell$ and that $E$ is supported at a single point $p$.  Then $E$ admits a length $\ell$\ filtration where all the quotients are isomorphic to $\OO_p$.  Thus, $E$ is $S$-equivalent to $\OO_p^{\oplus \ell}$.  Since $S$-equivalent sheaves are identified in the moduli space, the moduli space $M(P)$ is just $X^{(n)}$.

The Hilbert-Chow morphism $h:X^{[n]}\to X^{(n)}$ can now be seen to come from the moduli property for $X^{(n)}$.  Let $\cI$ be the universal ideal sheaf on $X\times X^{[n]}$.  The quotient of the inclusion $\cI\to \OO_{X\times X^{[n]}}$ is then a family of zero-dimensional sheaves of length $n$.  This family induces a map $X^{[n]}\to X^{(n)}$, which is just the Hilbert-Chow morphism.
\end{remark}

The exceptional locus of the Hilbert-Chow morphism is a divisor class $B$ on the Hilbert scheme $X^{[n]}$.  Alternately, $B$ is the locus of nonreduced schemes.  It is swept out by curves contained in fibers of the Hilbert-Chow morphism.  A simple example of such a curve is given by fixing $n-2$ points in $X$ and allowing a length $2$ scheme $\Spec \C[\varepsilon]/(\varepsilon^2)$ to ``spin'' at one additional point.

\begin{remark}
The divisor class $B/2$ is also Cartier, although it is not effective so it is harder to visualize.  Let $\cZ\subset X\times X^{[n]}$ denote the universal subscheme of length $n$, and let $p:\cZ\to X$ and $q:\cZ\to X^{[n]}$ be the projections.  Then the \emph{tautological bundle} $q_*p^*\OO_X$ is a rank $n$ vector bundle with determinant of class $-B/2$.
\end{remark}

Any line bundle $L$ on $X$ induces a line bundle $L^{(n)}$ on the symmetric product.  Pulling back this line bundle by the Hilbert-Chow morphism gives a line bundle $L^{[n]}:= h^*L^{(n)}$.  This gives an inclusion $\Pic(X)\to \Pic(X^{[n]})$.  If $L$ can be represented by a reduced effective divisor $D$, then $L^{[n]}$ can be represented by the locus $$D^{[n]} :=  \{Z\in X^{[n]} :Z\cap D \neq \emptyset\}.$$ Fogarty proves that the divisors mentioned so far generate the Picard group.

\begin{theorem}[Fogarty \cite{Fogarty2}] Let $X$ be a smooth surface with  $q(X) = 0$.  Then $$\Pic(X^{[n]}) \cong \Pic(X) \oplus \Z(B/2).$$ Thus, tensoring by $\R$, $$N^1(X^{[n]}) \cong N^1(X) \oplus \R B.$$
\end{theorem}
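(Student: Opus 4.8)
The plan is to prove this in two stages: first compute $\Pic(X^{(n)})$ on the symmetric product, then pull back along the Hilbert--Chow morphism $h$ and account for the one extra class $B/2$ coming from the exceptional locus. The key input on the symmetric product side is that $X^{(n)} = X^n/S_n$, so line bundles on $X^{(n)}$ correspond to $S_n$-equivariant line bundles on $X^n$; since $q(X) = 0$ one has $\Pic(X^n) \cong \Pic(X)^{\oplus n}$ by the K\"unneth-type decomposition for Picard groups of products (again using $q(X)=0$ so there is no contribution from $\mathrm{Pic}^0$), and the subgroup fixed by the obvious $S_n$-action is the diagonal copy $\Pic(X)$, giving $\Pic(X^{(n)}) \cong \Pic(X)$ via $L \mapsto L^{(n)}$. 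A subtlety here is that $X^{(n)}$ is singular along the diagonal, so one should either work with Weil divisor class groups and invoke that $X^{(n)}$ has quotient (hence rational, hence $\mathbb{Q}$-factorial with controlled local class groups) singularities, or pass through the smooth locus whose complement has codimension $\geq 2$. Either way the upshot is $\Pic(X^{(n)}) \cong \Pic(X)$.

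Next I would analyze $h : X^{[n]} \to X^{(n)}$, which is a birational projective morphism between normal varieties, with $X^{[n]}$ smooth (Fogarty's theorem, cited in the excerpt) and with exceptional locus the prime divisor $B$ of non-reduced schemes; $h$ is an isomorphism over the open locus $X^{(n)}_0$ of reduced (multiplicity-free) cycles. Pulling back gives an injection $h^* : \Pic(X^{(n)}) \hookrightarrow \Pic(X^{[n]})$ (injective because $h$ is birational and both spaces are normal, so $h_* h^* L \cong L$). The cokernel is controlled by the exceptional divisors: any line bundle on $X^{[n]}$ restricts to one on $X^{[n]} \setminus B \cong X^{(n)}_0$, and since $X^{(n)} \setminus X^{(n)}_0$ has codimension $\geq 2$ in the normal variety $X^{(n)}$, that restriction extends uniquely to a line bundle on $X^{(n)}$, i.e.\ is of the form $h^*M|_{X^{(n)}_0}$. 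Hence for any $L \in \Pic(X^{[n]})$ the difference $L - h^*M$ is supported on $B$, so $L - h^*M \equiv a[B]$ as Weil divisors for some $a \in \mathbb{Z}$ — but one must pin down the precise lattice, which is where $B/2$ enters.

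The crux is therefore showing the extra generator is $B/2$ and not $B$: I would use the tautological rank $n$ bundle $q_*p^*\OO_X$ on $X^{[n]}$ (notation as in the Remark preceding the statement), whose determinant is $-B/2$, to exhibit $B/2 \in \Pic(X^{[n]})$ as an honest line bundle class, and then argue that $h^*\Pic(X^{(n)}) \oplus \mathbb{Z}(B/2)$ is all of $\Pic(X^{[n]})$. For the latter it suffices to check that $B$ is a primitive element of $\Pic(X^{[n]})/h^*\Pic(X^{(n)}) \cong \mathbb{Z}$ after adjoining $B/2$, i.e.\ that no smaller fraction of $B$ is integral; this is a local computation near a generic point of $B$ — étale-locally $h$ looks like the product of $(n-2)$ trivial factors with the minimal resolution $\mathrm{Hilb}^2(\mathbb{A}^2) \to \mathrm{Sym}^2(\mathbb{A}^2)$, and for the latter the class group computation (the $A_1$-type behavior transverse to the diagonal, resolved by a single $\mathbb{P}^1$) shows exactly that $B/2$ generates the local quotient. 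Finally, tensoring the resulting short exact sequence $0 \to \Pic(X) \to \Pic(X^{[n]}) \to \mathbb{Z}(B/2) \to 0$ with $\mathbb{R}$ (it splits, the sequence being of free abelian groups) gives the stated decomposition of $N^1(X^{[n]})$, where again $q(X)=0$ is used to identify $N^1$ with $\Pic \otimes \mathbb{R}$. I expect the main obstacle to be the local class-group computation at the generic point of $B$ that certifies $B/2$ rather than $B$ as the generator — everything else is formal consequences of normality, codimension estimates, and $q(X)=0$.
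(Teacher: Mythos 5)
The paper does not actually prove this statement: it is quoted as a theorem of Fogarty with a citation, so there is no in-text argument to compare yours against. Your outline is essentially the standard modern route to the result (Fogarty's own paper proceeds via the Picard scheme and is organized rather differently), and most of the ingredients you assemble are the right ones: $\Pic(X^{(n)})\cong\Pic(X)$ via the quotient description of $X^{(n)}=X^n/S_n$ and $q(X)=0$, injectivity of $h^*$ for the birational morphism $h$, the tautological bundle $q_*p^*\OO_X$ realizing $B/2$ as an honest line-bundle class (exactly as in the Remark preceding the theorem), and the \'etale-local model $\Hilb^2(\A^2)\to\Sym^2(\A^2)\cong\A^2\times(\A^2/\pm 1)$ at a generic point of $B$.

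There is, however, one step that is false as written, and it sits precisely where the factor of $2$ is decided. You assert that for $L\in\Pic(X^{[n]})$ the restriction $L|_{X^{[n]}\setminus B}$, viewed on the locus $X^{(n)}_0$ of reduced cycles, ``extends uniquely to a line bundle on $X^{(n)}$'' because the complement has codimension $\geq 2$. Codimension-two extension on a normal variety produces a Weil divisor class, not a Cartier one, and $X^{(n)}$ is not locally factorial along the discriminant: at a generic point of its (codimension-two) singular locus the local class group is $\Z/2$, coming from the transverse $A_1$ singularity you yourself identify later. If the extension claim were true, your argument would conclude $\Pic(X^{[n]})=h^*\Pic(X^{(n)})\oplus\Z B$, contradicting the theorem. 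The correct statement is $\Pic(X^{[n]}\setminus B)\cong\Pic(X^{(n)}_0)=\mathrm{Cl}(X^{(n)})$, which contains $\Pic(X^{(n)})\cong\Pic(X)$ with quotient at most $\Z/2$; the tautological bundle shows this $\Z/2$ is actually realized by the restriction of $B/2$, and then surjectivity of $\Pic(X^{[n]})\to\Pic(X^{[n]}\setminus B)$ with kernel $\Z B$ (valid since $X^{[n]}$ is smooth and $B$ is irreducible) yields the stated lattice $\Pic(X)\oplus\Z(B/2)$. Your final paragraph contains all the pieces needed for this repair and is in tension with the faulty sentence, so the defect is a bookkeeping error between $\mathrm{Cl}$ and $\Pic$ on the singular symmetric product rather than a missing idea; but as literally written the middle of your argument computes the wrong lattice, and you should rewrite it with the Weil/Cartier distinction made explicit.
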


There is another interesting way to use a line bundle on $X$ to construct effective divisor classes.  In examples, many extremal effective divisors can be realized in this way.

\begin{example}
Suppose $L$ is a line bundle on $X$ with $m:=h^0(L)> n$.  If $Z\subset X$ is a general subscheme of length $n$, then $H^0(L\te I_Z)\subset H^0(L)$ is a subspace of codimension $n$.  Thus we get a rational map $$\phi: X^{[n]}\dashrightarrow G := \Gr(m-n,m)$$ to the Grassmannian $G$ of codimension $n$ subspaces of $H^0(L)$.  The line bundle $\tilde L^{[n]} := \phi^*\OO_G(1)$ (which is well-defined since the indeterminacy locus of $\phi$ has codimension at least $2$) can be represented by an effective divisor as follows.  Let $W\subset H^0(L)$ be a sufficiently general subspace of dimension $n$; one frequently takes $W$ to be the subspace of sections of $L$ passing through $m-n$ general points.  Then the locus $$\tilde D^{[n]} = \{Z\in X^{[n]} : H^0(L\te I_Z) \cap W \neq \{0\}\}$$ is an effective divisor representing $\phi^*\OO_G(1)$.
\end{example}

\subsubsection{Curve classes}  Let $C\subset X$ be an irreducible curve.  There are two immediate ways that we can induce a curve class on $X^{[n]}$.

\begin{example}
Fix $n-1$ points $p_1,\ldots,p_{n-1}$ on $X$ which are not in $C$.  Allowing an $n$th point $p_n$ to travel along $C$ gives a curve $\tilde C_{[n]} \subset X^{[n]}$.
\end{example}

\begin{example} Suppose $C$ admits a $g_n^1$.  If the $g_n^1$ is base-point free, then we get a degree $n$ map $C\to \P^1$.  The fibers of this map induce a rational curve $\P^1\to X^{[n]}$, and we write $C_{[n]}$ for the class of the image.  If the $g_n^1$ is not base-point free, we can first remove the basepoints to get a map $\P^1\to X^{[m]}$ for some $m<n$, and then glue the basepoints back on to get a map $\P^1\to X^{[n]}$.  The class $C_{[n]}$ doesn't depend on the particular $g_n^1$ used to construct the curve (see for example \cite[Proposition 3.5]{HuizengaThesis} in the case of $\P^2$).
\end{example}

\begin{remark}
Typically the curve classes $C_{[n]}$ are more interesting than $\tilde C_{[n]}$ and they frequently show up as extremal curves in the cone of curves.  However, the class $C_{[n]}$ is only defined if $C_{[n]}$ carries an interesting linear series of degree $n$, while $\tilde C_{[n]}$ always makes sense; thus curves of class $\tilde C_{[n]}$ are also sometimes used.
\end{remark}

Both curve classes $\tilde C_{[n]}$ and $C_{[n]}$ have the useful property that the intersection pairing with divisors is preserved, in the sense that if $D\subset X$ is a divisor then $$D^{[n]}.\tilde C_{[n]} = D^{[n]}.C_{[n]} = D.C;$$ indeed, it suffices to check the equalities when $D$ and $C$ intersect transversely, and in that case $D^{[n]}$ and $C_{[n]}$ (resp. $\tilde C_{[n]}$) intersect transversely in $D.C$ points.  

The intersection with $B$ is more interesting.  Clearly $$\tilde C_{[n]}.B = 0.$$ On the other hand, the nonreduced schemes parameterized by a curve of class $C_{[n]}$ correspond to ramification points of the degree $n$ map $C\to \P^1$.  The Riemann-Hurwitz formula then implies $$C_{[n]}.B = 2g(C)-2+2n.$$
One further curve class is useful; we write $C_0$ for the class of a curve contracted by the Hilbert-Chow morphism.  

\subsubsection{The intersection pairing} At this point we have collected enough curve and divisor classes to fully determine the intersection pairing between curves and divisors and find relations between the various classes.
The classes $C_0$ and $C_{[n]}$ for $C$ any irreducible curve span $N_1(X)$, so to completely compute the intersection pairing we are only missing the intersection number $C_0.B$.  However, since this intersection number is negative, we use the additional curve and divisor classes $\tilde C_{[n]}$ and $\tilde D^{[n]}$ to compute this number.  To this end, we compute the intersection numbers of $\tilde D^{[n]}$ with our curve classes.  

\begin{example}\label{ex-divCurveInt1}
To compute $\tilde D^{[n]}.C_0$, let $m = h^0(\OO_X(D))$, fix $m-n$ general points $p_1,\ldots,p_{m-n}$ in $X$, and represent $\tilde D^{[n]}$ as the set of schemes $Z$ such that there is a curve on $X$ of class $D$ passing through $p_1,\ldots,p_{m-n}$ and $Z$.  Schemes parameterized by $C_0$ are supported at $n-1$ general points $q_1,\ldots,q_{n-1}$, with a spinning tangent vector at $q_{n-1}$.   There is a unique curve $D'$ of class $D$ passing through $p_1,\ldots,p_{m-n},q_1,\ldots,q_{n-1}$, and it is smooth at $q_{n-1}$, so there is a single point of intersection between $C_0$ and $\tilde D^{[n]}$, occurring when the tangent vector at $q_{n-1}$ is tangent to $D'$.  Thus $\tilde D^{[n]}.C_0 = 1$.
\end{example}

\begin{example}
Next we compute $\tilde C_{[n]}.\tilde D^{[n]}$.  Represent $\tilde D^{[n]}$ as in Example \ref{ex-divCurveInt1}.  The curve class $\tilde C_{[n]}$ is represented by fixing $n-1$ points $q_1,\ldots,q_{n-1}$ and letting $q_n$ travel along $C$.  There is a unique curve $D'$ of class $D$ passing through $p_1,\ldots,p_{m-n},q_1,\ldots,q_{n-1}$, so $\tilde C_{[n]}$ meets $\tilde D^{[n]}$ when $q_n \in C\cap D'$.  Thus $\tilde C_{[n]}.\tilde D^{[n]} = C.D$.
\end{example}

\begin{example}
For an irreducible curve $C\subset X$, write $\hat C_{[n]}$ for the curve class on $X^{[n]}$ obtained by fixing $n-2$ general points in $X$, fixing one point on $C$, and letting one point travel along $C$ and collide with the point fixed on $C$.  It follows immediately that
\begin{align*}
\hat C_{[n]}.D^{[n]} &= C.D\\
\hat C_{[n]}.\tilde D^{[n]} &= C.D -1.
\end{align*}
Less immediately, we find $\hat C_{[n]}.B = 2$: while the curve meets $B$ set-theoretically in one point, a tangent space calculation shows this intersection has multiplicity $2$.
\end{example}
We now collect our known intersection numbers.
$$\begin{array}{c|ccc}
&D^{[n]} & \tilde D^{[n]} & B\\
\hline C_{[n]} & C.D && 2g(C)-2+2n\\
 \tilde C_{[n]}  & C.D & C.D& 0 \\
 \hat C_{[n]} & C.D & C.D-1 & 2\\
 C_0 & 0 & 1
\end{array}$$

As $\tilde D^{[n]}. C_0 \neq 0$, the divisors $\tilde D^{[n]}$ are all not in the codimension one subspace $N^1(X)\subset N^1(X^{[n]})$.  Therefore  the divisor classes of type $D^{[n]}$ and $\tilde D^{[n]}$ together span $N^1(X)$. It now follows that $$C_0+\hat C_{[n]} = \tilde C_{[n]}$$ since both sides pair the same with divisors $D^{[n]}$ and $\tilde D^{[n]}$, and thus $C_0.B=-2$.  We then also find relations $$C_{[n]} = \tilde C_{[n]}-(g(C)-1+n)C_0$$ and $$\tilde D^{[n]} = D^{[n]} - \frac{1}{2}B.$$ In particular, the divisors of type $\tilde D^{[n]}$ are all in the half-space of divisors with negative coefficient of $B$ in terms of the Fogarty isomorphism $N^1(X^{[n]}) \cong N^1(X) \oplus \R B$.   We can also complete our intersection table.
$$\begin{array}{c|ccc}
&D^{[n]} & \tilde D^{[n]} & B\\
\hline C_{[n]} & C.D &C.D -(g(C)-1+n)& 2g(C)-2+2n\\
 \tilde C_{[n]}  & C.D & C.D& 0 \\
 \hat C_{[n]} & C.D & C.D-1 & 2\\
 C_0 & 0 & 1 & -2
\end{array}$$

\subsubsection{Some nef divisors}\label{sssec-someNef}  Part of the nef cone of $X^{[n]}$ now follows from our knowledge of the intersection pairing.  First observe that since $C_0.D^{[n]} = 0$ and $C_0.B <0$, the nef cone is contained in the half-space of divisors with nonpositive $B$-coefficient in terms of the Fogarty isomorphism.

If $D$ is an ample divisor on $X$, then the divisor $D^{(n)}$ on the symmetric product is also ample, so $D^{[n]}$ is nef.  Since a limit of nef divisors is nef, it follows that if $D$ is nef on $X$ then $D^{[n]}$ is nef on $X^{[n]}$.  Furtermore, if $D$ is on the boundary of the nef cone of $X$ then $D^{[n]}$ is on the boundary of the nef cone of $X^{[n]}$.  Indeed, if $C.D = 0$ then $\tilde C_{[n]}.D^{[n]} = 0$ as well.  This proves
$$\Nef (X^{[n]}) \cap N^1(X) = \Nef(X),$$ where by abuse of notation we embed $N^1(X)$ in $N^1(X^{[n]})$ by $D\mapsto D^{[n]}$.

Boundary nef divisors which are not contained in the hyperplane $N^1(X)$ are more interesting and more challenging to compute.  Bridgeland stability and the positivity lemma will give us a tool for computing and describing these classes.

\subsubsection{Examples}  We close our initial discussion of the birational geometry of Hilbert schemes of points by considering several examples from this classical point of view.

\begin{example}[$\P^{2[n]}$]\label{ex-P2nNef}
The Neron-Severi space $N^1(\P^{2[n]})$ of the Hilbert scheme of $n$ points in $\P^2$ is spanned by $H^{[n]}$ and $B$, where $H$ is the class of a line in $\P^2$.  Any divisior in the cone $(H,B]$ is negative on $C_0$, so the locus $B$ swept out by curves of class $C_0$ is contained in the stable base locus of any divisor in this chamber.  Since $B.\tilde H_{[n]}=0$ and $\tilde H_{[n]}$ is the class of a moving curve, the divisor $B$ is an extremal effective divisor.

The divisor $H^{[n]}$ is an extremal nef divisor by \S\ref{sssec-someNef}, so to compute the full nef cone we only need one more extremal nef class.
The line bundle $\OO_{\P^2}(n-1)$ is \emph{$n$-very ample}, meaning that if $Z\subset \P^2$ is any zero-dimensional subscheme of length $n$, then $H^0(I_Z(n-1))$ has codimension $n$ in $H^0(\OO_{\P^2}(n-1))$.  Consequently, if $G$ is the Grassmannian of codimension-$n$ planes in $H^0(\OO_{\P^2}(n-1))$, then the natural map $\phi: \P^{2[n]}\to G$ is a \emph{morphism}.  Thus $\phi^* \OO_G(1)$ is nef.  In our notation for divisors, putting $D = (n-1)H$ we conclude that $$\tilde D^{[n]} = (n-1)H^{[n]} - \frac{1}{2}B$$ is nef.  

Furthermore, $\tilde D^{[n]}$ is not ample.  Numerically, simply observe that $\tilde D^{[n]}. H_{[n]} = 0$.  More geometrically, if two length $n$ schemes $Z,Z'$ are contained in the same line $L$ then the subspaces $H^0(I_Z(n-1))$ and $H^0(I_{Z'}(n-1))$ are equal, so $\phi$ identifies $Z$ and $Z'$.  Note that if $Z$ and $Z'$ are both contained in a single line $L$ then their ideal sheaves can be written as extensions $$0\to \OO_{\P^2}(-1) \to I_Z \to \OO_L(-n)\to 0$$
$$0\to \OO_{\P^2}(-1) \to I_{Z'} \to \OO_L(-n)\to 0.$$ 
This suggests that if we have some new notion of semistability where $I_Z$ is strictly semistable with Jordan-H\"older factors $\OO_{\P^2}(-1)$ and $\OO_L(-n)$ then the ideal sheaves $I_Z$ and $I_{Z'}$ will be $S$-equivalent.  Thus, in the moduli space of such objects, $I_Z$ and $I_{Z'}$ will be represented by the same point of the moduli space.
\end{example}

\begin{example}[$\P^{2[2]}$]\label{ex-P2Hilb2}
The divisor $\tilde H^{[2]} = H^{[2]}-\frac{1}{2}B$ spanning an edge of the nef cone is also an extremal effective divisor on $\P^{2[2]}$.  Indeed, the orthogonal curve class $H_{[2]}$ is a moving curve on $\P^{2[2]}$.  Thus there two chambers in the stable base locus decomposition of $\overline{\Eff}(\P^{2[2]})$.
\end{example}

\begin{example}[$\P^{2[3]}$]\label{ex-P2Hilb3}
By Example \ref{ex-P2nNef}, on $\P^{2[3]}$ the divisor $2H^{[3]}-\frac{1}{2}B$ is an extremal nef divisor.  The open chambers of the stable base locus decomposition are $$(H^{[3]},B), (2H^{[3]}-\frac{1}{2}B,H^{[3]}), \mbox{ and } (H^{[3]}-\frac{1}{2}B,2H^{[3]}-\frac{1}{2}B).$$
To establish this, first observe that $H^{[3]} - \frac{1}{2}B$ is the class of the locus $D$ of collinear schemes, since $D.C_0 = 1$ and $D.\tilde H_{[3]}=1.$ The divisor $2H^{[3]}-\frac{1}{2}B$ is orthogonal to curves of class $H_{[3]}$, so the locus of collinear schemes swept out by these curves lies in the stable base locus of any divisor in $(H^{[3]}-\frac{1}{2}B, 2H^{[3]}-\frac{1}{2}B)$.  In the other direction, any divisor in $(H^{[3]}-\frac{1}{2}B,2H^{[3]}-\frac{1}{2}B)$ is the sum of a divisor on the ray spanned by $D$ and an ample divisor.  It follows that the stable base locus in this chamber is exactly $D$.
\end{example}

For many more examples of the stable base locus decomposition of $\P^{2[n]}$, see \cite{ABCH} for explicit examples with $n\leq 9$, \cite{CH} for a discussion of the chambers where monomial schemes are in the base locus, and \cite{HuizengaJAG,CHW} for the effective cone.  Alternately, see \cite{CHGokova} for a deeper survey.   Also, see the work of Li and Zhao \cite{LiZhao} for more recent developments unifying several of these topics.

\subsection{Birational geometry of moduli spaces of sheaves}

We now discuss some of the basic aspects of the birational geometry of moduli spaces of sheaves.  Many of the concepts are mild generalizations of the picture for Hilbert schemes of points.

\subsubsection{Line bundles}\label{ssec-lineBundles} The main method of constructing line bundles on a moduli space of sheaves is by a determinantal construction.  First suppose $\cE/S$ is a family of sheaves on $X$ parameterized by $S$.  Let $p:S\times X\to S$ and $q:S\times X\to X$ be the projections.  The \emph{Donaldson homomorphism} is a map $\lambda_\cE:K(X)\to \Pic(S)$ defined by the composition $$\lambda_\cE:K(X)\fto{q^*} K^0(S\times X)\fto{\cdot [\cE]} K^0(S\times X) \fto{p_!} K^0(S)\fto{\det}\Pic(S) $$ Here $p_! = \sum_i (-1)^i R^ip_*$.  Informally, we pull back a sheaf on $X$ to the product, twist by the family $\cE$, push forward to $S$, and take the determinant line bundle.  Thus we obtain from any class in $K(X)$ a line bundle on the base $S$ of the family $\cE$. The above discussion is sufficient to define line bundles on a moduli space $M({\bf v})$ of sheaves if there is a universal family $\cE$ on $M({\bf v})$: there is then a map $\lambda_\cE:K(X)\to \Pic(M({\bf v}))$, and the image typically consists of many interesting line bundles on the moduli space.  

Things are slightly more delicate in the general case where there is no universal family. As motivation, given a class ${\bf w}\in K(X)$, we would like to define a line bundle $L$ on $M({\bf v})$ with the following property.  Suppose $\cE/S$ is a family of sheaves of character ${\bf v}$ and that $\phi:S\to M({\bf v})$ is the moduli map.  Then we would like there to be an isomorphism $\phi^*L \cong \lambda_{\cE}({\bf w}),$ so that the determinantal line bundle $\lambda_{\cE}({\bf w})$ on $S$ is the pullback of a line bundle on the moduli space $M({\bf v})$.

In order for this to be possible, observe that the line bundle $\lambda_{\cE}({\bf w})$ must be unchanged when it is replaced by $\cE\te p^*N$ for some line bundle $N\in \Pic(S)$.  Indeed, the moduli map $\phi: S\to M({\bf v})$ is not changed when we replace $\cE$ by $\cE\te p^*N$, so $\phi^*L$ is unchanged as well.  However, a computation shows that $$\lambda_{\cE\te p^*N}({\bf w}) =  \lambda_{\cE}({\bf w}) \te N^{\otimes \chi({\bf v}\te {\bf w})}.$$ Thus, in order for there to be a chance of defining a line bundle $L$ on $M({\bf v})$ with the desired property we need to assume that $\chi({\bf v}\te {\bf w})=0$.

In fact, if $\chi({\bf v}\te {\bf w})=0$, then there is a line bundle $L$ as above on the stable locus $M^s({\bf v})$, denoted by $\lambda^s({\bf w})$. To handle things rigorously, it is necessary to go back to the construction of the moduli space via GIT.  See \cite[\S 8.1]{HuybrechtsLehn} for full details, as well as a discussion of line bundles on the full moduli space $M({\bf v})$.

\begin{theorem}[{\cite[Theorem8.1.5]{HuybrechtsLehn}}]
Let ${\bf v}^\perp\subset K(X)$ denote the orthogonal complement of ${\bf v}$ with respect to the Euler pairing $\chi(-\otimes -)$.  Then there is a natural homomorphism $$\lambda^s: {\bf v^\perp}\to \Pic(M^s({\bf v})).$$
\end{theorem}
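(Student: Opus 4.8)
The plan is to construct $\lambda^s$ by descending the Donaldson homomorphism through the GIT construction of the moduli space. Fix an integer $m\gg 0$ so that every semistable sheaf $E$ of character ${\bf v}$ is $m$-regular; then $h^0(E(m))=P(m)=:N$ is independent of $E$, and a choice of basis of $H^0(E(m))$ exhibits $E$ as a quotient $V\otimes\OO_X(-m)\twoheadrightarrow E$ with $V\cong\C^N$. This realizes the semistable sheaves of character ${\bf v}$ as the points of a locally closed subscheme $R\subset\mathrm{Quot}(V\otimes\OO_X(-m),P)$, and $M^s({\bf v})$ is the geometric quotient $R^s/\mathrm{PGL}(V)$ of the stable locus $R^s\subset R$ by the natural change-of-basis action. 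Moreover $R^s\to M^s({\bf v})$ is a principal $\mathrm{PGL}(V)$-bundle, because stable sheaves are simple (Proposition~\ref{prop-seesaw}(4)) and hence the stabilizer in $\GL(V)$ of any point of $R^s$ is exactly the subgroup $\C^*$ of scalars, which acts trivially on $R$.

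On $X\times R$ there is an honest universal quotient $\tilde\cE$, and the change-of-basis action of $\GL(V)$ on $V$ endows $\tilde\cE$ with a canonical $\GL(V)$-linearization, with the scalars $\C^*\subset\GL(V)$ acting with weight $1$ on $\tilde\cE$ even though they act trivially on $R$. Feeding $\tilde\cE$ into the Donaldson homomorphism $\lambda_{\tilde\cE}\colon K(X)\to\Pic(R)$ therefore produces, for each ${\bf w}\in K(X)$, a line bundle $\lambda_{\tilde\cE}({\bf w})$ carrying an induced $\GL(V)$-linearization. The key computation is the exact analogue of the identity $\lambda_{\cE\otimes p^*N}({\bf w})=\lambda_{\cE}({\bf w})\otimes N^{\otimes\chi({\bf v}\otimes{\bf w})}$ recorded in~\S\ref{ssec-lineBundles}: the central $\C^*\subset\GL(V)$ acts on $\lambda_{\tilde\cE}({\bf w})$ through the character $t\mapsto t^{\chi({\bf v}\otimes{\bf w})}$. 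Hence when ${\bf w}\in{\bf v}^\perp$, i.e.\ $\chi({\bf v}\otimes{\bf w})=0$, the scalars act trivially, so the $\GL(V)$-linearization on $\lambda_{\tilde\cE}({\bf w})|_{R^s}$ factors through a $\mathrm{PGL}(V)$-linearization.

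Since $R^s\to M^s({\bf v})$ is a principal $\mathrm{PGL}(V)$-bundle, Kempf's descent lemma (equivalently, faithfully flat descent along this free quotient; see~\cite[\S 8.1]{HuybrechtsLehn}) shows that a $\mathrm{PGL}(V)$-linearized line bundle on $R^s$ is the pullback of a unique line bundle on $M^s({\bf v})$, and we take this line bundle to be $\lambda^s({\bf w})$. That ${\bf w}\mapsto\lambda^s({\bf w})$ is a group homomorphism on ${\bf v}^\perp$ follows from additivity of $p_!$ on short exact sequences together with $\det(A\oplus B)=\det A\otimes\det B$, which already makes $\lambda_{\tilde\cE}$ additive. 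Finally, naturality means that for any family $\cE/S$ of stable sheaves of character ${\bf v}$ with moduli map $\phi\colon S\to M^s({\bf v})$ one has $\phi^*\lambda^s({\bf w})\cong\lambda_{\cE}({\bf w})$; this is checked \'etale-locally on $S$, where $\cE$ is the pullback of $\tilde\cE$ up to twisting by a line bundle $N$ from $S$, so that $\lambda_{\cE}({\bf w})$ differs from $\phi^*\lambda^s({\bf w})$ only by $N^{\otimes\chi({\bf v}\otimes{\bf w})}=N^{\otimes 0}=\OO_S$, and the local isomorphisms glue. Independence of the auxiliary integer $m$ follows by the same comparison of universal families for two values of $m$.

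I expect the main obstacle to be the central-character computation: one must carefully track how the weight-one $\C^*$-action on $\tilde\cE$ propagates through $q^*$, the $K$-theoretic twist by $[\tilde\cE]$, the pushforward $p_!$, and the determinant, using that the restriction of $\tilde\cE$ to each fiber $X\times\{r\}$ has class ${\bf v}$ so that the relevant Euler characteristic is exactly $\chi({\bf v}\otimes{\bf w})$. A secondary point requiring care is that the descent must be performed on the stable locus $R^s$, where freeness of the $\mathrm{PGL}(V)$-action---and hence applicability of the descent lemma---genuinely relies on the simplicity of stable sheaves; on the strictly semistable locus this fails, which is precisely why the statement is restricted to $M^s({\bf v})$.
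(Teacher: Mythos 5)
Your proposal is correct and follows exactly the route the paper itself indicates: the paper does not prove this statement but defers to \cite[\S 8.1]{HuybrechtsLehn} after motivating the orthogonality hypothesis via the identity $\lambda_{\cE\otimes p^*N}({\bf w})=\lambda_{\cE}({\bf w})\otimes N^{\otimes\chi({\bf v}\otimes{\bf w})}$, and your central-character computation on the Quot scheme is precisely the equivariant form of that identity, with Kempf descent along the free $\mathrm{PGL}(V)$-action on $R^s$ (using simplicity of stable sheaves) completing the construction as in the reference. No gaps.
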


In general it is a difficult question to completely determine the Picard group of the moduli space.  One of the best results in this direction is the following theorem of Jun Li.

\begin{theorem}[\cite{JunLiPicard}]
Let $X$ be a regular surface, and let ${\bf v}\in K(X)$ with $\rk {\bf v}=2$ and $\Delta({\bf v}) \gg 0$.  Then the map $$\lambda^s : {\bf v}^\perp \te \Q \to \Pic(M^s({\bf v}))\te \Q$$ is a surjection.
\end{theorem}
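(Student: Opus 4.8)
The plan is to descend Picard groups along the GIT construction of the moduli space, using the large-discriminant structure theorems to reduce to a good open locus and the regularity of $X$ to produce enough determinantal line bundles. Fix $m\gg 0$ so that every Gieseker-stable sheaf $E$ of character ${\bf v}$ is $m$-regular; then $H^0(E(m))$ has a fixed dimension $N$ and the evaluation map exhibits $E$ as a quotient of $\OO_X(-m)^{\oplus N}$. Let $R^s$ be the locally closed subscheme of $\mathrm{Quot}(\OO_X(-m)^{\oplus N},P_{\bf v})$ parameterizing quotients $\OO_X(-m)^{\oplus N}\twoheadrightarrow E$ with $E$ stable of character ${\bf v}$ and $H^0$ of the map an isomorphism. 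Since stable sheaves are simple by Proposition~\ref{prop-seesaw}(4), the stabilizer of such a point in $\GL(N)$ is exactly the central $\mathbb{G}_m$, so $\mathrm{PGL}(N)$ acts freely on $R^s$ and the standard construction identifies $R^s\to M^s({\bf v})$ with a principal $\mathrm{PGL}(N)$-bundle. Kempf's descent lemma then gives $\Pic(M^s({\bf v}))\cong \Pic^{\mathrm{PGL}(N)}(R^s)$; equivalently, $\Pic(M^s({\bf v}))$ consists of the line bundles on $R^s$ admitting a $\GL(N)$-linearization on which the center $\mathbb{G}_m$ acts trivially.

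Next I would exploit $\Delta({\bf v})\gg 0$. By the O'Grady-type theorem quoted above, in this range $M^s({\bf v})$ --- hence $R^s$ --- is irreducible and normal, and the locus of $\mu$-stable locally free sheaves $E$ with $\ext^2(E,E)_0=0$ is dense with complement of codimension at least $2$; let $M^s_0\subset M^s({\bf v})$ be this locus and $R^s_0$ its preimage, so $R^s_0\to M^s_0$ is a $\mathrm{PGL}(N)$-bundle over a smooth variety and $R^s_0$ is smooth. Here regularity of $X$ enters: $q(X)=0$ forces $\Pic(X\times R^s_0)=\Pic(X)\oplus\Pic(R^s_0)$, so the Chern character of the universal quotient $\cE$ on $X\times R^s_0$ produces, via the Donaldson construction of \S\ref{ssec-lineBundles}, determinantal classes $\lambda_{\cE}({\bf w})$ in the $\GL(N)$-equivariant Picard group of $R^s_0$ for every ${\bf w}\in K(X)$. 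The same computation that forced $\chi({\bf v}\otimes{\bf w})=0$ in \S\ref{ssec-lineBundles} shows that $\mathbb{G}_m$ acts on $\lambda_{\cE}({\bf w})$ with weight $\chi({\bf v}\otimes{\bf w})$, so $\lambda_{\cE}({\bf w})$ descends to $M^s_0$ exactly when ${\bf w}\in{\bf v}^\perp$, and the descended bundle is the restriction of $\lambda^s({\bf w})$.

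The theorem then reduces to the claim that $\Pic(R^s_0)\otimes\Q$ is spanned by the images of the $\lambda_{\cE}({\bf w})$. Granting this, character bookkeeping finishes the argument: over $\Q$ there is an extension $0\to\Q\to\Pic^{\GL(N)}(R^s_0)\otimes\Q\to\Pic(R^s_0)\otimes\Q\to 0$ (the kernel spanned by the determinant character, using normality of $R^s_0$ and $\Pic(\GL(N))=0$ so that every line bundle is $\GL(N)$-linearizable), the determinantal classes surject onto $\Pic(R^s_0)\otimes\Q$ by the claim, and some ${\bf w}$ has $\chi({\bf v}\otimes{\bf w})\neq 0$, so the $\lambda_{\cE}({\bf w})$ span all of $\Pic^{\GL(N)}(R^s_0)\otimes\Q$; restricting to the kernel of the central weight shows $\lambda^s$ surjects onto $\Pic^{\mathrm{PGL}(N)}(R^s_0)\otimes\Q=\Pic(M^s_0)\otimes\Q$. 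Finally, since $M^s({\bf v})$ is normal and $M^s({\bf v})\setminus M^s_0$ has codimension at least $2$, the restriction $\Pic(M^s({\bf v}))\hookrightarrow\Pic(M^s_0)$ is injective, so any $L\in\Pic(M^s({\bf v}))$ agrees, after multiplying by a positive integer, with some $\lambda^s({\bf w})$ on $M^s_0$ and hence on all of $M^s({\bf v})$. The passage to $\Q$-coefficients is what lets us ignore torsion and the discrepancy between $\mathbb{G}_m$ and $\mu_N$.

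The main obstacle is exactly the claim of the previous paragraph: that the rational Picard group of the good part of this Quot scheme is no larger than $\Pic(X)\oplus\Z^2$, i.e. is generated by determinantal classes. This is the technical heart of the argument and uses $\Delta\gg 0$ in an essential, non-effective way: one stratifies $R^s$ by the singularity type of the cokernel, relates strata of different discriminants through elementary modifications (Example~\ref{ex-elementary}) so as to induct on $\Delta$, and shows that the strata added in forming the full $R^s$ have codimension at least $2$ and therefore contribute no new divisor classes. An alternative route reaches the same conclusion cohomologically: for $\Delta\gg 0$ one has $h^1(\OO_{M^s})=0$, so $\Pic(M^s({\bf v}))\otimes\Q=H^{1,1}(M^s)\cap H^2(M^s,\Q)$, and one shows $H^2(M^s,\Q)$ is spanned by Künneth components of $\ch(\cE)$ --- but the hard input there is again a codimension and vanishing estimate forcing the cohomology to be ``no larger than expected'' precisely when $\Delta$ is large.
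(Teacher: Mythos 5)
The paper does not prove this statement; it is quoted verbatim from Jun Li's paper [Li94] with only a citation, so there is no in-paper argument to measure yours against. Judged on its own, your proposal gets the formal scaffolding right and in the standard way: the identification of $R^s\to M^s({\bf v})$ as a $\mathrm{PGL}(N)$-bundle via simplicity of stable sheaves, descent of $\GL(N)$-linearized bundles with trivial central weight, the computation that the center acts on $\lambda_{\cE}({\bf w})$ with weight $\chi({\bf v}\otimes{\bf w})$, and the restriction to a codimension-$\geq 2$ complement using normality. This matches the setup of \cite[\S 8.1]{HuybrechtsLehn}, which is where the paper sends the reader for the construction of $\lambda^s$. (One small slip in the bookkeeping: knowing that \emph{some} ${\bf w}$ has $\chi({\bf v}\otimes{\bf w})\neq 0$ does not by itself force the span of the $\lambda_{\cE}({\bf w})$ to contain the character lattice $\Q\cdot\det$; you should instead note that $\lambda_{\cE}([\OO_X(-m)])$ \emph{is} the determinant character, since $p_*\cE(m)\cong\OO^{\oplus N}$ equivariantly on $R^s$. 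With that, your "restrict to the kernel of the central weight" step closes correctly.)

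The genuine gap is the one you flag yourself: the claim that $\Pic(R^s_0)\otimes\Q$ is spanned by determinantal classes. Everything before that point is reversible formalism, so this claim is not a reduction of the theorem to something easier --- it is essentially the theorem, restated on the Quot scheme with the orthogonality constraint removed. The two strategies you sketch for it are headlines rather than arguments. The stratification-and-induction route needs (i) a base case, (ii) a mechanism showing that $\Pic\otimes\Q$ does not grow when you pass between moduli spaces of adjacent discriminant via elementary modifications (the correspondence variety relating $M^s({\bf v})$ and $M^s({\bf v}')$ must itself be controlled, and its fibers are not obviously connected or of small Picard rank), and (iii) effective codimension bounds on the deleted strata, none of which follow from the O'Grady-type results quoted in the survey. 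The cohomological route requires knowing that $H^2(M^s,\Q)$ stabilizes and is generated by the K\"unneth components of $\ch(\cE)$ for $\Delta\gg 0$; that stabilization is the hard analytic/geometric input of Li's earlier work on the topology of these moduli spaces (via the Uhlenbeck compactification and Donaldson theory), not something that can be waved in from the hypotheses. As written, then, the proposal is a correct organizational framework with the actual content of the theorem left as an acknowledged black box.
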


More precise results are somewhat rare.  We discuss a few of the main such examples here.

\begin{example}[Picard group of moduli spaces of sheaves on $\P^2$]
Let $M({\bf v})$ be a moduli space of sheaves on $\P^2$.  The Picard group of this space was determined by Dr\'ezet \cite{DrezetPicard}.  The answer depends on the $\delta$-function introduced in the classification of semistable characters in \S\ref{sssec-existP2}.  If ${\bf v}$ is the character of an exceptional bundle then $M({\bf v})$ is a point and there is nothing to discuss.  If $\delta(\mu({\bf v})) = \Delta({\bf v})$, then $M({\bf v})$ is a moduli space of so-called \emph{height zero} bundles and the Picard group is isomorphic to $\Z$.  Finally, if $\delta(\mu({\bf v}))> \Delta({\bf v})$ then the Picard group is isomorphic to $\Z\oplus \Z$.  In each case, the Donaldson morphism is surjective.
\end{example}

\begin{example}[Picard group of moduli spaces of sheaves on $\P^1\times \P^1$] Let $M({\bf v})$ be a moduli space of sheaves on $\P^1\times \P^1$.  Already in this case the Picard group does not appear to be known in every case.  See \cite{YoshiokaRuled} for some partial results, as well as results on ruled surfaces in general.
\end{example}

\begin{example}[Picard group of moduli spaces of sheaves on a $K3$ surface] Let $X$ be a $K3$ surface, and let ${\bf v}\in K_{\num}(X)$ be a primitive positive vector (see \S\ref{sssec-K3exist}).  Let $H$ be a polarization which is generic with respect to ${\bf v}$.  In this case the story is similar to the computation for $\P^2$, with the Beauville-Bogomolov form playing the role of the $\delta$ function.  If $\langle{\bf v},{\bf v}\rangle = -2$ then $M_H({\bf v})$ is a point.  If $\langle {\bf v},{\bf v}\rangle = 0$, then the Donaldson morphism $\lambda: {\bf v}^\perp\te \R \to N^1(M_H({\bf v}))$ is surjective with kernel spanned by ${\bf v}$, and $N^1(M_H({\bf v}))$ is isomorphic to ${\bf v}^\perp/{\bf v}$.  Finally, if $\langle{\bf v},{\bf v}\rangle >0$ then the Donaldson morphism is an isomorphism.  See \cite{Yoshioka3} or \cite{BM2} for details.
\end{example}

\begin{example}[Brill-Noether divisors]
For birational geometry it is important to be able to construct sections of of line bundles.  The determinantal line bundles introduced above frequently have special sections vanishing on \emph{Brill-Noether divisors}.  Let $(X,H)$ be a smooth surface, and let ${\bf v}$ and ${\bf w}$ be an orthogonal pair of Chern characters, i.e. suppose that $\chi({\bf v}\te {\bf w})=0$, and suppose that there is a reasonable, e.g. irreducible, moduli space $M_H({\bf v})$ of semistable sheaves.  Suppose $F$ is a vector bundle with $\ch F = {\bf w}$, and consider the locus $$D_F = \{ E \in M_H({\bf v}): H^0(E\te F)\neq 0\}.$$ If we assume that $H^2(E\te F)=0$ for every $E\in M_H({\bf v})$ and that $H^0(E\te F) = 0$ for a general $E\in M_H({\bf v})$ then the locus $D_F$ will be an effective divisor.  Furthermore, its class is $\lambda({\bf w}^*)$.

The assumption that $H^2(E\te F) = 0$ often follows easily from stability and Serre duality.  For instance, if $\mu_H({\bf v}),\mu_H({\bf w}),\mu_H(K_X)> 0$ and $F$ is a semistable vector bundle then $$H^2(E\te F) = \Ext^2(F^*,E) = \Hom(E,F^*(K_X))^*=0$$ by stability.  On the other hand, it can be quite challenging to verify that $H^0(E\te F)= 0$ for a general $E\in M_H({\bf v})$.  These types of questions have been studied in \cite{CHW} in the case of $\P^2$ and \cite{Ryan} in the case of $\P^1\times \P^1$.  Interesting effective divisors arising in the birational geometry of moduli spaces frequently arise in this way.
\end{example}

\subsubsection{The Donaldson-Uhlenbeck-Yau compactification} For Hilbert schemes of points $X^{[n]}$, the symmetric product $X^{(n)}$ offered an alternate compactification, with the map $h:X^{[n]}\to X^{(n)}$ being the Hilbert-Chow morphism.  Recall that from a moduli perspective the Hilbert-Chow morphism sends the ideal sheaf $I_Z$ to (the $S$-equivalence class of) the structure sheaf $\OO_Z$.  Thinking of $\OO_X$ as the double-dual of $I_Z$, the sheaf $\OO_Z$ is the cokernel in the sequence $$0\to I_Z\to \OO_X\to \OO_Z\to 0.$$ The Donaldson-Uhlenbeck-Yau compactification can be viewed as analogous to the compactification of the Hilbert scheme by the symmetric product. 

Let $(X,H)$ be a smooth surface, and let ${\bf v}$ be the Chern character of a semistable sheaf of positive rank.  Set-theoretically, the Donaldson-Uhlenbeck-Yau compactification $M_H^{DUY}({\bf v})$ of the moduli space $M_H({\bf v})$ can be defined as follows.  Recall that the double dual of any torsion-free sheaf $E$ on $X$ is locally free, and there is a canonical inclusion $E\to E^{**}$.  (Note, however, that the double-dual of a Gieseker semistable sheaf is in general only $\mu_H$-semistable).  Define $T_E$ as the cokernel $$0\to E \to E^{**}\to T_E\to 0,$$ so that $T_E$ is a skyscraper sheaf supported on the singularities of $E$.  In the Donaldson-Uhlenbeck-Yau compactification of $M_H({\bf v})$, a sheaf $E$ is replaced by the pair $(E^{**},T_E)$ consisting of the $\mu_H$-semistable sheaf $E^{**}$ and the $S$-equivalence class of $T_E$, i.e. an element of some symmetric product $X^{(n)}$.  In particular, two sheaves which have isomorphic double duals and have singularities supported at the same points (counting multiplicity) are identified in $M_H^{DUY}({\bf v})$, even if the particular singularities are different.  The Jun Li morphism $j:M_H({\bf v})\to M_H^{DUY}({\bf v})$ inducing the Donaldson-Uhlenbeck-Yau compactification arises from the line bundle $\lambda({\bf w})$ associated to the character ${\bf w}$ of a $1$-dimensional torsion sheaf supported on a curve whose class is a multiple of $H$.  See \cite[\S8.2]{HuybrechtsLehn} or \cite{JunLi} for more details.

\subsubsection{Change of polarization}\label{sssec-variation} Classically, one of the main interesting sources of birational maps between moduli spaces of sheaves is provided by varying the polarization.  Suppose that $\{H_t\}$ $(0\leq t \leq 1)$ is a continuous family of ample divisors on $X$.  Let $E$ be a sheaf which is $\mu_{H_0}$-stable.  It may happen for some time $t>0$ that $E$ is not $\mu_{H_t}$-stable.  In this case, there is a smallest time $t_0$ where $E$ is not $\mu_{H_{t_0}}$-stable, and then $E$ is strictly $\mu_{H_{t_0}}$-semistable.  There is then an exact sequence $$0\to F \to E \to G\to 0$$ of $\mu_{H_{t_0}}$-semistable sheaves with the same $\mu_{H_{t_0}}$-slope.  For $t<t_0$, we have $$\mu_{H_t}(F)<\mu_{H_t}(E)<\mu_{H_t}(G).$$ On the other hand, in typical examples the inequalities will be reversed for $t>t_0$: $$\mu_{H_t}(F)>\mu_{H_t}(E)>\mu_{H_t}(G).$$ While $E$ is certainly not $\mu_{H_t}$-semistable for $t>t_0$, if there are sheaves $E'$ fitting as extensions in sequences $$0\to G\to E'\to F\to 0$$ then it may happen that $E'$ is $\mu_{H_t}$-stable for $t>t_0$ (although they are certainly not $\mu_{H_t}$-semistable for $t<t_0$).

Thus, the set of $H_t$-semistable sheaves changes as $t$ crosses $t_0$, and the moduli space $M_{H_t}({\bf v})$ changes accordingly.  It frequently happens that only some very special sheaves become destabilized as $t$ crosses $t_0$, in which case the expectation would be that the moduli spaces for $t<t_0$ and $t>t_0$ are birational.

To clarify the dependence between the geometry of the moduli space $M_H({\bf v})$ and the choice of polarization $H$, we partition the cone $\Amp(X)$ of ample divisors on $X$ into chambers where the moduli space remains constant. Let ${\bf v}$ be a primitive vector, and suppose $E$ has $\ch(E) = {\bf v}$ and is strictly $H$-semistable for some polarization $H$.  Let $F\subset E$ be an $H$-semistable subsheaf with $\mu_H(F) = \mu_H(E)$.  Then the locus $\Lambda \subset \Amp(X)$ of polarizations $H'$ such that $\mu_{H'}(F) = \mu_{H'}(E)$ is a hyperplane in the ample cone, called a \emph{wall}.  The collection of all walls obtained in this way gives the ample cone a locally finite wall-and-chamber decomposition.  As $H$ varies within an open chamber, the moduli space $M_H({\bf v})$ remains unchanged.  On the other hand, if $H$ crosses a wall then the moduli spaces on either side may be related in interesting ways.   

Notice that if say $X$ has Picard rank $1$ or we are considering Hilbert schemes of points then no interesting geometry can be obtained by varying the polarization.  Recall that in Example \ref{ex-P2Hilb3} we saw that even $\P^{2[3]}$ has nontrivial alternate birational models.  One of the goals of Bridgeland stability will be to view these alternate models as a variation of the stability condition.  Variation of polarization is one of the simplest examples of how a stability condition can be modified in a continuous way, and Bridgeland stability will give us additional ``degrees of freedom'' with which to vary our stability condition. 

\section{Bridgeland stability}\label{sec-Bridgeland}

The definition of a Bridgeland stability condition needs somewhat more machinery than the previous sections.  However, we will primarily work with explicit stability conditions where the abstract nature of the definition becomes very concrete.  While it would be a good idea to review the basics of derived categories of coherent sheaves, triangulated categories, t-structures, and torsion theories, it is also possible to first develop an appreciation for stability conditions and then go back and fill in the missing details.  Good references for background on these topics include \cite{GelfandManin} and \cite{Huybrechts}. 

\subsection{Stability conditions in general} Let $X$ be a smooth projective variety. We write $D^b(X)$ for the bounded derived category of coherent sheaves on $X$.  We also write $K_{\num}(X)$ for the Grothendieck group of $X$ modulo numerical equivalence.  Following \cite{bridgeland:stable}, we make the following definition.

\begin{definition}\label{def-Bridgeland}
A \emph{Bridgeland stability condition} on $X$ is a pair $\sigma =(Z,\cA)$ consisting of an $\R$-linear map $Z:K_{\num}(X)\te \R\to \C$ (called the \emph{central charge}) and the heart $\cA\subset D^b(X)$ of a bounded t-structure (which is an abelian category).  Additionally, we require that the following properties be satisfied.
\begin{enumerate}
\item\label{ax-positivity} (Positivity) If $0\neq E\in \cA$, then $$Z(E)\in \HH:=\{re^{i\theta}:0 < \theta \leq \pi \textrm{ and } r> 0\}\subset \C.$$ We define functions $r(E) = \Im Z(E)$  and $d(E) = -\Re Z(E)$, so that $r(E)\geq 0$ and $d(E) >0$ whenever $r(E)=0$.  Thus $r$ and $d$ are generalizations of the classical rank and degree functions.  The \emph{(Bridgeland) $\sigma$-slope} is defined by $$\mu_\sigma(E) = \frac{d(E)}{r(E)} = -\frac{\Re Z(E)}{\Im Z(E)}.$$
\item (Harder-Narasimhan filtrations) An object $E\in \cA$ is called (Bridgeland) $\sigma$-(semi)stable if $$\mu_\sigma(F) \leqpar \mu_\sigma(E)$$ whenever $F\subset E$ is a subobject of $E$ in $\cA$.  We require that every object of $\cA$ has a finite Harder-Narasimhan filtration in $\cA$.  That is, there is a unique filtration $$0= E_0 \subset E_1 \subset \cdots \subset E_\ell = E$$ of objects $E_i\in \cA$ such that the quotients $F_i = E_i/E_{i-1}$ are $\sigma$-semistable with decreasing slopes $\mu_\sigma(F_1) > \cdots > \mu_{\sigma}(F_\ell)$.
\item (Support property) The support property is one final more technical condition which must be satisfied.  Fix a norm $\|\cdot\|$ on $K_{\num}(X)\te \R$.  Then there must exist a constant $C>0$ such that $$\|E\| \leq C \| Z(E)\|$$ for all semistable objects $E\in \cA$.
\end{enumerate}
\end{definition}

\begin{remark}
Let $(X,H)$ be a smooth surface.  The subcategory $\coh X\subset D^b(X)$ of sheaves with cohomology supported in degree $0$ is the heart of the standard t-structure. We can then try to define a central charge $$Z(E) = -c_1(E).H+i\rk(E)H^2,$$ and the corresponding slope function is the ordinary slope $\mu_H$.  However, this does \emph{not} give a Bridgeland stability condition, since $Z(E)=0$ for any finite length torsion sheaf.  Thus it is not immediately clear in what way Bridgeland stability generalizes ordinary slope- or Gieseker stability.  Nonetheless, for any fixed polarization $H$ and character ${\bf v}$ there are Bridgeland stability conditions $\sigma$ where the $\sigma$-(semi)stable objects of character ${\bf v}$ are precisely the $H$-Gieseker (semi)stable sheaves of character ${\bf v}$.  See \S\ref{ssec-largeVolume} for more details.
\end{remark}

\begin{remark}
To work with the definition of a stability condition, it is crucial to understand what it means for a map $F\to E$ between objects of the heart $\cA$ to be injective.  The following exercise is a good test of the definitions involved.
\begin{exercise}\label{ex-exact}
Let $\cA \subset D^b(X)$ be the heart of a bounded t-structure, and let $\phi:F\to E$ be a map of objects of $\cA$.  Show that $\phi$ is injective if and only if the mapping cone $\cone(\phi)$ of $\phi$ is also in $\cA$.  In this case, there is an exact sequence $$0\to F\to E\to \cone(\phi)\to 0$$ in $\cA$.
\end{exercise}
\end{remark}

One of the most important features of Bridgeland stability is that the space of all stability conditions on $X$ is a complex manifold in a natural way.  In particular, we are able to continuously vary stability conditions and study how the set (or moduli space) of semistable objects varies with the stability condition.  Let $\Stab(X)$ denote the space of stability conditions on $X$.  Then Bridgeland proves that there is a natural topology on $\Stab(X)$ such that the forgetful map \begin{align*}
\Stab(X)&\to \Hom_\R(K_{\num}(X)\te \R, \C)\\
(Z,\cA) & \mapsto Z
\end{align*}
is a local homeomorphism.  Thus if $\sigma = (Z,\cA)$ is a stability condition and the linear map $Z$ is deformed by a small amount, there is a unique way to deform the category $\cA$ to get a new stability condition.

\subsubsection{Moduli spaces} Let $\sigma$ be a stability condition and fix a vector ${\bf v}\in K_{\num}(X)$.  There is a notion of a flat family $\cE/S$ of $\sigma$-semistable objects parameterized by an algebraic space $S$ \cite{BM2}.  Correspondingly, there is a moduli stack $\cM_\sigma({\bf v})$ parameterizing flat families of $\sigma$-semistable object of character ${\bf v}$.  In full generality there are many open questions about the geometry of these moduli spaces.  In particular, when is there a projective coarse moduli space $M_{\sigma}({\bf v})$ parameterizing $S$-equivalence classes of $\sigma$-semistable objects of character ${\bf v}$?  

Several authors have addressed this question for various surfaces, at least when the stability condition $\sigma$ does not lie on a \emph{wall} for ${\bf v}$ (see \S \ref{ssec-walls}).  For instance, there is a projective moduli space $M_\sigma({\bf v})$ when $X$ is $\P^2$ \cite{ABCH}, $\P^1\times \P^1$ or $\F_1$ \cite{ArcaraMiles}, an abelian surface \cite{MYY}, a  $K3$ surface \cite{BM2}, or an Enriques surface \cite{Nuer}.  While projectivity of Gieseker moduli spaces can be shown in great generality, there is no known uniform GIT construction of moduli spaces of Bridgeland semistable objects.  Each proof requires deep knowledge of the particular surface.

\subsection{Stability conditions on surfaces}\label{ssec-surfaceConds}
Bridgeland \cite{bridgelandK3} and Arcara-Bertram \cite{AB} explain how to construct stability conditions on a smooth surface.  The construction is very explicit, and these are the only kinds of stability conditions we will consider in this survey.  Before beginning we introduce some notation to make the definitions more succinct.

Let $X$ be a smooth surface and let $H,D\in \Pic(X)\te \R$ be an ample divisor and an arbitrary \emph{twisting} divisor, respectively.  We formally define the \emph{twisted Chern character} $\ch^D = e^{-D} \ch$.  Explicitly expanding this definition, this means that 
\begin{align*}
\ch^D_0 &= \ch_0\\
\ch^D_1 &= \ch_1 - D\ch_0\\
\ch^D_2 &= \ch_2 - D\ch_1 + \frac{D^2}{2} \ch_0.
\end{align*}
We can also define twisted slopes and discriminants by the formulas
\begin{align*}
\mu_{H,D} &= \frac{H.\ch_1^D}{H^2 \ch_0^D}\\
\Delta_{H,D} &= \frac{1}{2} \mu_{H,D}^2 - \frac{\ch_2^D}{H^2\ch_0^D}.
\end{align*}
For reasons that will become clear in \S\ref{ssec-largeVolume} it is often useful to add in an additional twist by $K_X/2$.  We therefore additionally define $$\overline\ch^D = \ch^{D+\frac{1}{2}K_X} \qquad \overline \mu_{H,D} = \mu_{H,D+\frac{1}{2}K_X} \qquad \overline\Delta_{H,D} = \Delta_{H,D+\frac{1}{2}K_X}.$$ \begin{remark}
Note that the twisted slopes $\mu_{H,D}$ and $\overline \mu_{H,D}$ are primarily just a notational convenience; they only differ from the ordinary slope by a constant (depending on $H$ and $D$).  On the other hand, twisted discriminants $\Delta_{H,D}$ and $\overline\Delta_{H,D}$ do not obey such a simple formula, and are genuinely useful.
\end{remark}
\begin{remark}[Twisted Gieseker stability]
We have already encountered $H$-Gieseker (semi)stability and the associated moduli spaces $M_H({\bf v})$ of $H$-Gieseker semistable sheaves.  There is a mild generalization of this notion called \emph{$(H,D)$-twisted Gieseker (semi)stability}.  A torsion-free coherent sheaf $E$ is $(H,D)$-twisted Gieseker (semi)stable if whenever $F\subsetneq E$ we have 
\begin{enumerate}
\item $\overline \mu_{H,D} (F)\leq \overline \mu_{H,D}(E)$ and
\item whenever $\overline \mu_{H,D}(F) = \overline\mu_{H,D}(E)$, we have $\overline\Delta_{H,D}(F) \geqpar \overline\Delta_{H,D}(E)$.
\end{enumerate}
Compare with Example \ref{ex-stabilitySurface}, which is the case $D=0$.  When $H,D$ are $\Q$-divisors, Matsuki and Wentworth \cite{MatsukiWentworth} construct projective moduli spaces $M_{H,D}({\bf v})$ of $(H,D)$-twisted Gieseker semistable sheaves.  Note that any $\mu_H$-stable sheaf is both $H$-Gieseker stable and $(H,D)$-twisted Gieseker stable, so that the spaces $M_H({\bf v})$ and $M_{H,D}({\bf v})$ are often either isomorphic or birational.
\end{remark}

\begin{exercise}\label{ex-twistedBogomolov}
Use the Hodge Index Theorem and the ordinary Bogomolov inequality (Theorem \ref{thm-bogomolov}) to show that if $E$ is $\mu_H$-semistable then $$\overline\Delta_{H,D}(E) \geq 0.$$
\end{exercise}

We now define a half-plane (or \emph{slice}) of stability conditions on $X$ corresponding to a choice of divisors $H,D\in \Pic(X)\te \R$ as above.  First fix a number $\beta\in \R$.  We define two full subcategories of the category $\coh X$ of coherent sheaves by 
\begin{align*}
\cT_\beta &= \{E\in \coh X:\overline \mu_{H,D} (G) > \beta \textrm{ for every quotient $G$ of }E\}\\
\cF_\beta &= \{E \in \coh X: \overline\mu_{H,D}(F) \leq \beta \textrm{ for every subsheaf $F$ of }E\}.
\end{align*}
Note that by convention the (twisted) Mumford slope of a torsion sheaf is $\infty$, so that $\cT_\beta$ contains all the torsion sheaves on $X$.  On the other hand, sheaves in $\cF_\beta$ have no torsion subsheaf and so are torsion-free.

For any $\beta\in \R$, the pair of categories $(\cT_\beta,\cF_\beta)$ form what is called a \emph{torsion pair}.  Briefly, this means that $\Hom(T,F)=0$ for any $T\in \cT_\beta$ and $F\in \cF_\beta$, and any $E\in \coh X$ can be expressed naturally as an extension $$0\to F\to E\to T\to 0$$ of a sheaf $T\in T_\beta$ by a sheaf $F\in \cF_\beta$.  Then there is an associated t-structure with heart $$\cA_\beta = \{E^{\bullet} : \rH^{-1}(E^{\bullet})\in \cF_\beta,\rH^0(E^\bullet) \in \cT_{\beta}, \textrm{ and } \rH^i(E^\bullet)=0\textrm{ for } i\neq -1,0\} \subset D^b(X),$$ where we use a Roman $\rH^i(E^{\bullet})$ to denote cohomology sheaves.

 Some objects of $\cA_\beta$ are the sheaves $T$ in $\cT_\beta$ (viewed as complexes sitting in degree $0$) and shifts $F[1]$ where $F\in \cF_\beta$, sitting in degree $-1$.  More generally, every object $E^{\bullet}\in \cA_\beta$ is an extension $$0\to \rH^{-1}(E^{\bullet})[1] \to E^\bullet \to \rH^0(E^{\bullet})\to 0,$$ where the sequence is exact in the heart $\cA_\beta$.

To define stability conditions we now need to define central charges compatible with the hearts $\cA_\beta$.  Let $\alpha\in \R_{>0}$ be an arbitrary positive real number.  We define $$Z_{\beta,\alpha} = -\overline \ch_2^{D+\beta H}+\frac{\alpha^2H^2}{2}\overline \ch_0^{D+\beta H} + i H \overline\ch_1^{D+\beta H},$$ and put $\sigma_{\beta,\alpha} = (Z_{\beta,\alpha},\cA_\beta)$.  Note that if $E$ is an object of nonzero rank with twisted slope $\overline \mu_{H,D}$ and discriminant $\overline \Delta_{H,D}$ then the corresponding Bridgeland slope is $$\mu_{\sigma_{\beta,\alpha}} = -\frac{\Re Z_{\beta,\alpha}}{\Im Z_{\beta,\alpha}} = \frac{(\overline \mu_{H,D} - \beta)^2 -\alpha^2 - 2\overline \Delta_{H,D}}{\overline \mu_{H,D} - \beta}.$$
\begin{theorem}[\cite{AB}]
Let $X$ be a smooth surface, and let $H,D\in \Pic(X)\te \R$ with $H$ ample.  If $\beta,\alpha\in \R$ with $\alpha>0$, then the pair $\sigma_{\beta,\alpha}=(Z_{\beta,\alpha},\cA_\beta)$ defined above is a Bridgeland stability condition.
\end{theorem}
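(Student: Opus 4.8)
The plan is to check, in order, the three requirements of Definition~\ref{def-Bridgeland}: that $\cA_\beta$ is the heart of a bounded t-structure, that $Z_{\beta,\alpha}$ satisfies the positivity axiom~(\ref{ax-positivity}) on $\cA_\beta$, and that Harder--Narasimhan filtrations together with the support property hold. Only the last of these is genuinely deep: the first is formal homological algebra, and the second is an explicit computation resting on the twisted Bogomolov inequality.

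First I would dispose of the t-structure, which does not use $\alpha$ at all. One checks that $(\cT_\beta,\cF_\beta)$ is a torsion pair on $\coh X$: the vanishing $\Hom(\cT_\beta,\cF_\beta)=0$ holds since a nonzero map $T\to F$ would have image $I$ that is torsion-free (a subsheaf of the torsion-free $F$) yet satisfies both $\overline\mu_{H,D}(I)>\beta$ (as a quotient of $T\in\cT_\beta$) and $\overline\mu_{H,D}(I)\leq\beta$ (as a subsheaf of $F\in\cF_\beta$), which is impossible; and the canonical decomposition $0\to T_E\to E\to F_E\to 0$ with $T_E\in\cT_\beta$, $F_E\in\cF_\beta$ is obtained by truncating the $\overline\mu_{H,D}$-Harder--Narasimhan filtration of $E$ at slope $\beta$. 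The Happel--Reiten--Smal\o{} tilting construction then produces the bounded t-structure with heart $\cA_\beta$, and every object sits in the extension $0\to\rH^{-1}(E)[1]\to E\to\rH^0(E)\to 0$.

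Next I would verify positivity, which is where $\alpha>0$ and Exercise~\ref{ex-twistedBogomolov} enter. Write $r(E)=\Im Z_{\beta,\alpha}(E)=H\cdot\overline\ch_1^{D+\beta H}(E)$ and $d(E)=-\Re Z_{\beta,\alpha}(E)=\overline\ch_2^{D+\beta H}(E)-\tfrac{\alpha^2H^2}{2}\overline\ch_0^{D+\beta H}(E)$. Additivity of $Z_{\beta,\alpha}$ and the sequence above reduce everything to the two building blocks $T\in\cT_\beta$ and $F[1]$ with $F\in\cF_\beta$. For $T\in\cT_\beta$ the torsion-free quotient of $T$ has all $\overline\mu_{H,D}$-Harder--Narasimhan slopes $>\beta$ and the torsion subsheaf contributes a class of nonnegative $H$-degree, so $r(T)\geq 0$, with equality precisely when $T$ is zero-dimensional; in that case $d(T)=\overline\ch_2^{D+\beta H}(T)$ is the length of $T$, hence positive. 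Dually, for $F\in\cF_\beta$ all $\overline\mu_{H,D}$-Harder--Narasimhan slopes of $F$ are $\leq\beta$, so $r(F[1])=-H\cdot\overline\ch_1^{D+\beta H}(F)\geq 0$, with equality exactly when $F$ is $\overline\mu_{H,D}$-semistable with $\overline\mu_{H,D}(F)=\beta$. In that boundary case, absorbing $\beta H$ into the twisting divisor makes the twisted slope of $F$ vanish, which --- using that $\overline\Delta_{H,D}$ is unchanged by twists by multiples of $H$ --- gives $\overline\ch_2^{D+\beta H}(F)=-H^2\rk(F)\,\overline\Delta_{H,D}(F)$; since $F$ is in particular $\mu_H$-semistable, Exercise~\ref{ex-twistedBogomolov} and $\alpha>0$ yield $d(F[1])=H^2\rk(F)\bigl(\overline\Delta_{H,D}(F)+\tfrac{\alpha^2}{2}\bigr)>0$. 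Adding the two building blocks gives $r(E)\geq 0$ for every $0\neq E\in\cA_\beta$ and $d(E)>0$ whenever $r(E)=0$, which is axiom~(\ref{ax-positivity}).

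Finally I would treat the Harder--Narasimhan and support properties, and this is the main obstacle. For Harder--Narasimhan filtrations one applies Bridgeland's general criterion, for which it suffices to exclude certain infinite chains of sub- and quotient objects in $\cA_\beta$ with monotone phases; this is arranged using the Bogomolov inequality to bound discriminants together with a Noetherian-type finiteness property of $\cA_\beta$ (available from the discreteness of the values of $Z_{\beta,\alpha}$ when $H,D$ are rational, and in general by approximation). The support property is equivalent to the existence of a quadratic form $Q$ on $K_{\num}(X)\otimes\R$ that is negative definite on $\ker Z_{\beta,\alpha}$ and nonnegative on all $\sigma_{\beta,\alpha}$-semistable objects; the natural choice is the quadratic form underlying the twisted discriminant $\overline\Delta_{H,D}$, whose negative definiteness on $\ker Z_{\beta,\alpha}$ follows from the Hodge Index Theorem and the explicit shape of $Z_{\beta,\alpha}$. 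The serious point is the inequality $Q\geq 0$ on $\sigma_{\beta,\alpha}$-semistable objects: this is a Bogomolov--Gieseker inequality for genuine \emph{complexes}, not merely sheaves (for which it is Exercise~\ref{ex-twistedBogomolov}), and one proves it by deforming $\alpha$ and $\beta$, using that for $\alpha\gg 0$ the $\sigma_{\beta,\alpha}$-semistable objects of a fixed class are governed by ordinary $\mu_H$-semistable sheaves (the large-volume picture of \S\ref{ssec-largeVolume}) and tracking how the discriminant changes as a semistable object is destabilized across a wall.
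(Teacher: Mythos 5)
Your proposal is correct and, on the one axiom the paper actually proves --- positivity --- it follows exactly the paper's route: reduce via the extension $0\to\rH^{-1}(E^\bullet)[1]\to E^\bullet\to\rH^0(E^\bullet)\to 0$ to objects of $\cT_\beta$ and $\cF_\beta[1]$, dispose of the non-boundary cases by the sign of $H\cdot\overline\ch_1^{D+\beta H}$ and the zero-dimensional case by length, and settle the boundary case $\overline\mu_{H,D}(F)=\beta$ using Exercise~\ref{ex-twistedBogomolov} together with $\alpha>0$. The remaining material you supply (the torsion pair and tilting, and the Harder--Narasimhan and support properties) is exactly what the paper dismisses as ``quite formal'' and defers to \cite{AB}, and your outline of it is the standard one.
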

The most interesting part of the theorem is the verification of the Positivity axiom \ref{ax-positivity} in the Definition \ref{def-Bridgeland} of a stability condition, which we now sketch.  The other parts are quite formal.
\begin{proof}[Sketch proof of positivity]
Note that $Z:=Z_{\beta,\alpha}$ is an $\R$-linear map.  Since the upper half-plane $\HH = \{re^{i\theta}:0< \theta \leq \pi \textrm{ and } r>0\}$ is closed under addition, the exact sequence $$0\to \rH^{-1}(E^{\bullet})[1] \to E^{\bullet} \to \rH^0(E^\bullet)\to 0$$ implies that it is sufficient to check $Z(T)\in \HH$ and $Z(F[1]) \in \HH$ whenever $T\in \cT_\beta$ and $F\in \cF_\beta$.  

If $T\in \cT_\beta$ is not torsion, then $\overline\mu_{H,D}(T) > \beta$ is finite.  Expanding the definitions immediately gives $H.\overline\ch_1^{D+\beta H}(T)>0$, so $Z(T)\in \HH$.  If $T$ is torsion with positive-dimensional support, then again $H.\overline\ch_1^{D+\beta H}(T) >0$ and $Z(T)\in \HH$.  Finally, if $T\neq 0$ has zero-dimensional support then $-\overline \ch_2^{D+\beta H}(T) = -\ch_2(T) >0$ so $Z(T)\in \HH$.

Suppose $0\neq F\in \cF_\beta$.  If actually $\overline\mu_{H,D}(F)<\beta$, then $H.\overline\ch_1^{D+\beta H}(F) < 0$ and $Z(F[1])\in \HH$ again follows.  So suppose that $\overline \mu_{H,D}(F) = \beta$, which gives $\Im Z(F)=0$.  By the definition of $\cF_\beta$, the sheaf $F$ is torsion-free and $\overline \mu_{H,D+\beta H}$-semistable of $\overline \mu_{H,D+\beta H}$ slope $0$. By Exercise \ref{ex-twistedBogomolov} we find that $\overline \Delta_{H,D+\beta H}(F) \geq 0$.  The formula for the twisted discriminant and the fact that $\alpha>0$ then gives $\Re Z(F)<0$, so $\Re Z(F[1]) >0$.      
\end{proof}

To summarize, if we let $\Pi=\{(\beta,\alpha):\beta,\alpha\in \R, \alpha>0\}$, the choice of a pair of divisors $H,D\in \Pic(X)\te \R$ with $H$ ample defines an embedding
\begin{align*}
\Pi &\to \Stab(X)\\
(\beta,\alpha)& \mapsto \sigma_{\beta,\alpha}.
\end{align*}
This half-plane of stability conditions is called the \emph{$(H,D)$-slice} of the stability manifold.  We will sometimes abuse notation and write $\sigma\in \Pi$ for a stability condition $\sigma$ parameterized by the slice.  While the stability manifold can be rather large and unwieldy in general (having complex dimension $\dim_\R K_{\num}(X)\te \R$), much of the interesting geometry can be studied by inspecting the different slices of the manifold.

\subsection{Walls}\label{ssec-walls}
Fix a class ${\bf v}\in K_{0}(X)$.  The stability manifold $\Stab(X)$ of $X$ admits a locally finite wall-and-chamber decomposition such that the set of $\sigma$-semistable objects of class ${\bf v}$ does not vary as $\sigma$ varies within an open chamber.  This is analogous to the wall-and-chamber decomposition of the ample cone $\Amp(X)$ for classical stability, see \S\ref{sssec-variation}.  If ${\bf v}$ is primitive, then a stability condition $\sigma$ lies on a wall if and only if there is a strictly $\sigma$-semistable object of character ${\bf v}$.  

For computations, the entire stability manifold can be rather unwieldy to work with.  One commonly restricts attention to stability conditions in some easily parameterized subset of the stability manifold.   
Here we focus on the $(H,D)$-slice $\{\sigma_{\beta,\alpha}:\beta,\alpha\in \R, \alpha>0\}$ of stability conditions on a smooth surface $X$ determined by a choice of divisors $H,D\in \Pic(X)\te \R$ with $H$ ample. 

\begin{definition}
Let $X$ be a smooth surface, and fix divisors $H,D\in \Pic(X)\te \R$ with $H$ ample.  Let ${\bf v}, {\bf w} \in K_{\num}(X)$ be two classes which have different $\mu_{\sigma_{\beta,\alpha}}$-slopes for some $(\beta,\alpha)$ with $\alpha>0$.
\begin{enumerate}
\item The \emph{numerical wall}  for $\bf v$ determined by $\bf w$ is the subset $$W({\bf v},{\bf w})=\{(\beta,\alpha):\mu_{\sigma_{\beta,\alpha}}({\bf v}) = \mu_{\sigma_{\beta,\alpha}}({\bf w})\}\subset \Pi.$$ 

\item The numerical wall for ${\bf v}$ determined by ${\bf w}$ is a \emph{wall} if there is some $(\beta,\alpha)\in W({\bf v},{\bf w})$  and an exact sequence $$0\to F\to E\to G\to 0$$ of $\sigma_{\beta,\alpha}$-semistable objects with $\ch F = {\bf w}$ and $\ch E = {\bf v}$.
\end{enumerate}
\end{definition}

\subsubsection{Geometry of numerical walls} The geometry of the numerical walls in a slice of the stability manifold is particularly easy to describe.  Verifying the following properties is a good exercise in the algebra of Chern classes and the Bridgeland slope function.
\begin{enumerate}
\item First suppose ${\bf v}$ has nonzero rank and that the Bogomolov inequality $\Delta_{H,D}({\bf v}) \geq 0$ holds.  Then the vertical line $\beta = \overline\mu_{H,D}({\bf v})$ is a numerical wall.  The other numerical walls form two nested families of semicircles on either side of the vertical wall.  These semicircles have centers on the $\beta$-axis, and their apexes lie along the hyperbola $\Re Z_{\beta,\alpha}({\bf v}) = 0$ in $\Pi$.  The two families of semicircles accumulate at the points $$(\mu_{H,D}({\bf v})\pm \sqrt{2\Delta_{H,D}({\bf v})},0)$$ of intersection of $\Re Z_{\beta,\alpha}({\bf v}) = 0$ with the $\beta$-axis.  See Figure \ref{fig-walls} for an approximate illustration.
\begin{figure}[htbp]
\begin{center}
\input{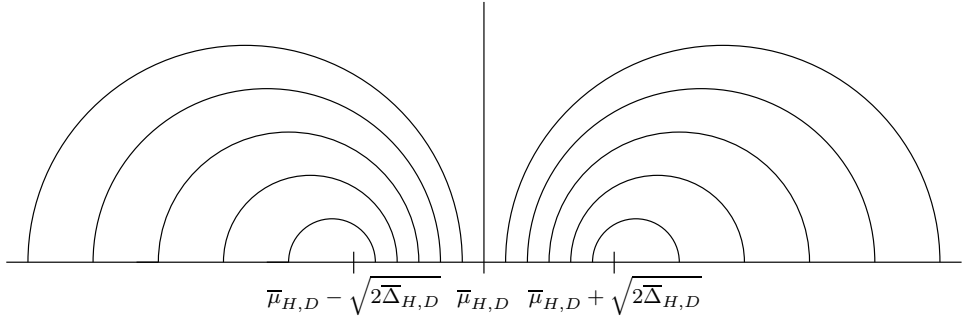}
\end{center}
\caption{Schematic diagram of numerical walls in the $(H,D)$-slice for a nonzero rank character ${\bf v}$ with slope $\OV \mu_{H,D}$ and discriminant $\OV \Delta_{H,D}$.}
\label{fig-walls}
\end{figure}   

\item If instead ${\bf v}$ has rank zero but $c_1({\bf v})\neq 0$, then the curve $\Re Z_{\beta,\alpha}({\bf v}) = 0$ in $\Pi$ degenerates to the vertical line $$\beta = \frac{\overline\ch_2^D({\bf v})}{\overline\ch_1^D({\bf v}).H}.$$ The numerical walls for ${\bf v}$ are all semicircles with center $(\overline\ch_2^D({\bf v})/(\overline\ch_1^D({\bf v}).H),0)$ and arbitrary radius.
\end{enumerate}

\begin{exercise}In ${\bf v},{\bf w}$ have nonzero rank and different slopes, the numerical semicircular wall $W({\bf v},{\bf w})$ has center $(s_W,0)$ and radius $\rho_W$ satisfying \begin{align*} s_W &= \frac{\overline \mu_{H,D}({\bf v})+\overline \mu_{H,D}({\bf w})}{2}-\frac{\overline\Delta_{H,D}({\bf v}) - \overline\Delta_{H,D}({\bf w})}{\overline\mu_{H,D}({\bf v}) - \overline \mu_{H,D}({\bf w})}\\ \rho_W^2 &= (s_W-\overline\mu_{H,D}({\bf v}))^2 - 2\overline \Delta_{H,D}({\bf v}).\end{align*}  If $(s_W-\mu_{H,D}({\bf v}))^2\leq 2\overline\Delta_{H,D}({\bf v})$, then the wall is empty.
\end{exercise}

\begin{remark}
Let ${\bf v}$ be a character of nonzero rank.  It follows from the above discussion that if $W,W'$ are numerical walls for ${\bf v}$ both lying left of the vertical wall $\beta = \overline\mu_{H,D}({\bf v})$ then $W$ is nested inside $W'$ if and only if $s_W > s_{W'}$, where the center of $W$ (resp. $W'$) is $(s_W,0)$ (resp. $(s_{W'},0)$).
\end{remark}

\subsubsection{Walls and destabilizing sequences} In the definition of a wall $W:=W({\bf v},{\bf w})$ for ${\bf v}$ determined by a character ${\bf w}$ we required that there is some point $(\beta,\alpha)\in W$ and a \emph{destabilizing} exact sequence $$0\to F\to E\to G\to 0$$ of $\sigma_{\beta,\alpha}$-semistable objects, where $\ch(E) = {\bf v}$ and $\ch(F) = {\bf w}$.  Note that since $(\beta,\alpha)\in W$ we in particular have $\mu_{\sigma_{\beta,\alpha}}({ F}) = \mu_{\sigma_{\beta,\alpha}}({E}) = \mu_{\sigma_{\beta,\alpha}}(G)$.  The above sequence is an exact sequence of objects of the categories $\cA_\beta$.
By the geometry of the numerical walls, the wall $W$ separates the slice $\Pi$ into two open regions $\Omega,\Omega'$.  Relabeling the regions if necessary, for $\sigma \in \Omega$ we have $\mu_\sigma(F) > \mu_\sigma(E)$.  Therefore $E$ is not $\sigma$-semistable for any $\sigma\in \Omega$.  On the other hand, $E$ \emph{may} be $\sigma$-semistable for $\sigma\in \Omega$; at least the subobject $F\subset E$ does not violate the semistability of $E$.

Our definition of a wall is perhaps somewhat unsatisfactory due to the dependence on picking some point $(\beta,\alpha)\in W$ where there is a destabilizing exact sequence as above.  The next result shows that this definition is equivalent to an a priori stronger definition which appears more natural.  Roughly speaking, destabilizing sequences ``persist'' along the entire wall.

\begin{proposition}[{\cite[Lemma 6.3]{ABCH}} for $\P^2$, \cite{Maciocia} in general]\label{prop-destSeq}
Suppose that $$0\to F\to E\to G\to 0$$ is an exact sequence of $\sigma_{\beta,\alpha}$-semistable objects of the same $\sigma_{\beta,\alpha}$-slope.  Put $\ch F = {\bf w}$ and $\ch E = {\bf v}$, and suppose ${\bf v}$ and ${\bf w}$ do not have the same slope everywhere in the $(H,D)$-slice.  Let $W = W({\bf v},{\bf w})$ be the wall defined by these characters.  If $(\beta',\alpha')\in W$ is any point on the wall, then the above exact sequence is an exact sequence of $\sigma_{\beta',\alpha'}$-semistable objects of the same $\sigma_{\beta',\alpha'}$-slope.

In particular, each of the objects $F,E,G$ appearing in the above sequence lie in the category $\cA_{\beta'}$. 
\end{proposition}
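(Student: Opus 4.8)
The statement splits naturally into a numerical part and a structural part, and I would handle them in that order. For the numerical part, recall that $Z_{\beta',\alpha'}$ is $\R$-linear, so $Z_{\beta',\alpha'}({\bf v}) = Z_{\beta',\alpha'}({\bf w}) + Z_{\beta',\alpha'}({\bf v}-{\bf w})$. At the given point $(\beta,\alpha)$, the inclusion $F\subset E$ in $\cA_\beta$ with $\mu_{\sigma_{\beta,\alpha}}(F)=\mu_{\sigma_{\beta,\alpha}}(E)$ forces $Z_{\beta,\alpha}(F)$, $Z_{\beta,\alpha}(E)$ and $Z_{\beta,\alpha}(G)$ onto a common ray in $\HH$. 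By definition of $W$ the vectors $Z_{\beta',\alpha'}({\bf v})$ and $Z_{\beta',\alpha'}({\bf w})$ are colinear for every $(\beta',\alpha')\in W$; since the three imaginary parts are $\ge 0$ on objects of $\cA_{\beta'}$ and do not vanish along the semicircle $W$ (which, when ${\bf v}$ has nonzero rank, avoids the vertical wall $\beta'=\overline\mu_{H,D}({\bf v})$), a continuity argument from the base point shows the three central charges stay on a common ray of $\HH$. Hence $\mu_{\sigma_{\beta',\alpha'}}(F)=\mu_{\sigma_{\beta',\alpha'}}(E)=\mu_{\sigma_{\beta',\alpha'}}(G)$ everywhere on $W$.

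For the structural part there are two things to check: that $F,E,G\in\cA_{\beta'}$, and that they stay $\sigma_{\beta',\alpha'}$-semistable. For the first I would use, for each of the three objects, the canonical sequence $0\to \rH^{-1}(E)[1]\to E\to \rH^0(E)\to 0$ in $\cA_\beta$. Semistability of $E$ at $(\beta,\alpha)$ makes $\rH^{-1}(E)[1]$ a subobject and $\rH^0(E)$ a quotient, so the Bridgeland slopes of both are pinched by $\mu_{\sigma_{\beta,\alpha}}(E)$; combining the explicit formula for $\mu_{\sigma_{\beta,\alpha}}$ in terms of $\overline\mu_{H,D}$ and $\overline\Delta_{H,D}$ with the twisted Bogomolov inequality (Exercise~\ref{ex-twistedBogomolov}), applied to the Harder--Narasimhan factors of $\rH^{-1}(E)$ and $\rH^0(E)$, yields an upper bound on $\mu_{\max}(\rH^{-1}(E))$ and a lower bound on $\mu_{\min}(\rH^0(E))$ in the ordinary twisted slope. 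Reading these against the definitions of $\cT_{\beta'}$ and $\cF_{\beta'}$ places $E$ in $\cA_{\beta'}$ for every $\beta'$ in the open $\beta$-interval swept out by $W$; the two endpoints are controlled by noting that if the minimal Harder--Narasimhan quotient of $E$ were to leave $\cT_{\beta'}$, its Bridgeland slope would tend to $-\infty$, contradicting semistability, which is exactly why the bounds turn out sharp. The same argument applies to $F$ and $G$. Once all three are in $\cA_{\beta'}$, the triangle identity $\cone(F\to E)=G$ together with Exercise~\ref{ex-exact} gives exactness of $0\to F\to E\to G\to 0$ in $\cA_{\beta'}$.

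Finally I would propagate semistability of $F,E,G$ along $W$ by a connectedness argument: the locus in $W$ on which all three are semistable contains $(\beta,\alpha)$ and is closed, since semistability is preserved under limits within a slice once one knows walls are locally finite, which in turn follows from the support property in Definition~\ref{def-Bridgeland}. For openness one argues that if $E$ (say) were destabilized at some $(\beta',\alpha')\in W$ by a $\sigma_{\beta',\alpha'}$-semistable subobject $A$, then the limit of $A$ as $(\beta',\alpha')$ returns toward $(\beta,\alpha)$ would also destabilize $E$ there, unless $\ch A$ defines a numerical wall through that point distinct from $W$; by the nested structure of numerical walls together with local finiteness, $W$ meets only finitely many other walls, so the conclusion holds on a dense open subset of $W$ and then on all of $W$ by closedness. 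The hard part is precisely this last step: ruling out that one of $F,E,G$ is broken by a subobject lying on one of the finitely many other numerical walls that $W$ crosses. Controlling these is the technical heart of \cite{Maciocia} in general and of \cite[Lemma~6.3]{ABCH} for $\P^2$; it is the support property that makes the relevant set of potential destabilizing subobjects finite, and hence the argument manageable.
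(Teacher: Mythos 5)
The paper gives no proof of this proposition---it defers entirely to \cite[Lemma 6.3]{ABCH} and \cite{Maciocia}---so your sketch has to stand on its own. Your overall strategy (numerical alignment along $W$, heart membership, then an open--closed propagation of semistability) has the right shape, and the numerical part is fine. But the propagation step, which you correctly identify as the heart of the matter, has a genuine gap. The fact that makes the proposition work is that \emph{distinct numerical walls for a fixed character in a slice are pairwise disjoint}: they form the two nested families of semicircles, plus the vertical wall, described in \S\ref{ssec-walls}. Given that, the argument closes cleanly: if $E$ ceased to be semistable somewhere on $W$, then at the first such point $\sigma_1$ along $W$ (semistability being closed) there is a subobject $A$ with $\mu_\sigma(A)>\mu_\sigma(E)$ just past $\sigma_1$ and $\mu_\sigma(A)\leq\mu_\sigma(E)$ before, so the numerical wall $W({\bf v},\ch A)$ passes through $\sigma_1$ and is genuinely \emph{crossed} there by $W$; disjointness forces $W({\bf v},\ch A)=W$, whence $\mu_\sigma(A)=\mu_\sigma(E)$ on all of $W$ and $A$ never destabilizes $E$ there---contradiction. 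Your version instead asserts that ``$W$ meets only finitely many other walls'' and tries to conclude on a dense open subset and then by closedness. This is inconsistent with the nestedness you invoke in the same sentence (nested walls meet \emph{no} other walls), and if $W$ really did cross other numerical walls for ${\bf v}$, density-plus-closedness would not rule out $E$ being destabilized on a whole arc of $W$ cut out by such a crossing. The operative input is the disjointness of numerical walls, not local finiteness or the support property, and it needs to be stated and used explicitly.

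The heart-membership step also leans on an unproved assertion: you claim that the Bogomolov-type bounds on the Harder--Narasimhan factors of $\rH^{-1}(E)$ and $\rH^0(E)$ ``turn out sharp,'' so that $E\in\cA_{\beta'}$ for every $\beta'$ in the interval swept out by $W$. That is exactly what needs proof, and it is cleaner to obtain membership in $\cA_{\beta'}$ as a \emph{consequence} of semistability along $W$---as the ``in particular'' in the statement suggests---rather than as a prerequisite for it: once $E$ is known to be $\sigma_{\beta',\alpha'}$-semistable it lies in the heart by definition, and exactness of the sequence in $\cA_{\beta'}$ then follows from Exercise~\ref{ex-exact} as you say. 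Reorganizing the proof in that order, with the disjointness of numerical walls made explicit, would repair the argument and bring it in line with the proofs in \cite{ABCH} and \cite{Maciocia}.
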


Note that the first part of the proposition is essentially equivalent to the final statement by Exercise \ref{ex-exact}.

\subsection{Large volume limit}\label{ssec-largeVolume} As mentioned earlier, (twisted) Gieseker moduli spaces of sheaves on surfaces can be recovered as certain moduli spaces of Bridgeland-semistable objects.  We say that an object $E^\bullet\in \cA_\beta$ is a \emph{sheaf} if it is isomorphic to a sheaf sitting in degree $0$.  We continue to work in an $(H,D)$-slice of stability conditions on a smooth surface $X$.

\begin{theorem}[{\cite[\S 6]{ABCH}} for $\P^2$, \cite{Maciocia} in general]\label{thm-largeVol}
Let ${\bf v}\in K_{\num}(X)$ be a character of positive rank with $\overline\Delta_{H,D}({\bf v})\geq 0$.  Let $\beta < \overline\mu_{H,D}({\bf v})$, and suppose $\alpha \gg 0$ (depending on ${\bf v}$).  Then an object $E^\bullet\in \cA_\beta$  is $\sigma_{\beta,\alpha}$-semistable if and only if it is an $(H,D)$-semistable sheaf.  
\end{theorem}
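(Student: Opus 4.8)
The plan is to prove the two implications separately, using the large‑$\alpha$ asymptotics of the Bridgeland slope as the common engine. Recall from \S\ref{ssec-surfaceConds} that for an object $E$ of nonzero rank
$$\mu_{\sigma_{\beta,\alpha}}(E)= -\frac{\alpha^2}{\overline\mu_{H,D}(E)-\beta}+\bigl(\overline\mu_{H,D}(E)-\beta\bigr)-\frac{2\overline\Delta_{H,D}(E)}{\overline\mu_{H,D}(E)-\beta},$$
while a nonzero rank‑$0$ object of $\cA_\beta$ with $\Im Z_{\beta,\alpha}=0$ (a $0$‑dimensional sheaf) has slope $+\infty$. First, the statement makes sense: an $(H,D)$‑twisted Gieseker semistable sheaf $E$ of character ${\bf v}$ is $\mu_H$‑semistable (Example \ref{ex-slopestability}), so every quotient sheaf has twisted slope $\geq\overline\mu_{H,D}({\bf v})>\beta$ and thus $E\in\cT_\beta\subset\cA_\beta$. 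A bookkeeping argument with cohomology sheaves and the torsion pair $(\cT_\beta,\cF_\beta)$ — any subobject $F\hookrightarrow E$ in $\cA_\beta$ has $\rH^{-1}(F)=0$, so $F$ is a sheaf in $\cT_\beta$, and its $\cA_\beta$‑image $F'=\mathrm{im}_{sh}(F\to E)\subseteq E$ again lies in $\cT_\beta$ with $\mu_{\sigma_{\beta,\alpha}}(F)\leq\mu_{\sigma_{\beta,\alpha}}(F')$ by the seesaw property — reduces checking $\sigma_{\beta,\alpha}$‑semistability of $E$ to comparing slopes with the genuine subsheaves $F'\subseteq E$ that lie in $\cT_\beta$; and since $E$ is $\mu_H$‑semistable, these are exactly the subsheaves against which $(H,D)$‑twisted Gieseker semistability must be tested. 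The key observation is then that, on objects with twisted slope $>\beta$, the $\alpha\to\infty$ asymptotics of $\mu_{\sigma_{\beta,\alpha}}$ reproduce exactly the lexicographic order defining $(H,D)$‑twisted Gieseker stability: the $-\alpha^2$ term compares $\overline\mu_{H,D}$, and when the $\overline\mu_{H,D}$ agree the $\alpha$‑independent term compares $\overline\Delta_{H,D}$, a larger discriminant giving a strictly smaller slope.

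Granting this, the implication ``$(H,D)$‑semistable sheaf $\Rightarrow$ $\sigma_{\beta,\alpha}$‑semistable'' follows: for any relevant subsheaf $F'\in\cT_\beta$ of $E$ one has $\overline\mu_{H,D}(F')\leq\overline\mu_{H,D}(E)$, together with $\overline\Delta_{H,D}(F')\geqpar\overline\Delta_{H,D}(E)$ when equality holds, hence $\mu_{\sigma_{\beta,\alpha}}(F')\leqpar\mu_{\sigma_{\beta,\alpha}}(E)$ once $\alpha$ is large.

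For the converse, given a $\sigma_{\beta,\alpha}$‑semistable $E^\bullet\in\cA_\beta$ I would first show it is a sheaf: if $\rH^{-1}(E^\bullet)\neq0$, its maximal‑slope Harder--Narasimhan subsheaf $F_1\in\cF_\beta$ is $\mu_H$‑semistable with $\overline\mu_{H,D}(F_1)\leq\beta$, so $\overline\Delta_{H,D}(F_1)\geq0$ by the twisted Bogomolov inequality (Exercise \ref{ex-twistedBogomolov}); then $F_1[1]$ is a subobject of $E^\bullet$ in $\cA_\beta$ with $\mu_{\sigma_{\beta,\alpha}}(F_1[1])\to+\infty$ (or $=+\infty$) as $\alpha\to\infty$, while $\mu_{\sigma_{\beta,\alpha}}({\bf v})\to-\infty$, contradicting semistability for $\alpha$ large. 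Once $E^\bullet$ is known to be a sheaf in $\cT_\beta$, the same asymptotic comparison shows it is $\mu_H$‑semistable and then $(H,D)$‑twisted Gieseker semistable via the dictionary above.

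The main obstacle is the \emph{uniformity} of ``$\alpha\gg0$'': a priori a destabilizing piece such as $F_1$ above, or a destabilizing subsheaf of $E$, could have $\overline\mu_{H,D}$ very negative or $\overline\Delta_{H,D}$ very large, so the threshold for $\alpha$ recedes to infinity. To control this one combines (a) a Bogomolov‑type inequality $\overline\Delta_{H,D}\geq0$ for $\sigma_{\beta,\alpha}$‑semistable objects of positive rank, which follows from the support property, with (b) the additivity/convexity of $\overline\Delta_{H,D}$ in a short exact sequence; these confine the classes of all terms of a potential destabilizing sequence to a bounded region of $K_{\num}(X)$. Equivalently, in the language of \S\ref{ssec-walls}, only finitely many walls for ${\bf v}$ meet the vertical ray $\{\beta\}\times(0,\infty)$ and each is a semicircle of bounded radius, so taking $\alpha$ above all these radii places $\sigma_{\beta,\alpha}$ in the ``large volume'' chamber; boundedness of the family of $(H,D)$‑semistable sheaves of class ${\bf v}$ (and of the candidate sub/quotient sheaves, via Grothendieck's lemma) makes the threshold depend only on ${\bf v}$. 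The asymptotic slope computation of the first paragraph then identifies the semistable objects in that chamber, completing the argument.
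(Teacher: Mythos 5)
Your argument is correct and follows essentially the same route as the paper's proof: the large-$\alpha$ asymptotics of $\mu_{\sigma_{\beta,\alpha}}$ reproducing the lexicographic $(\overline\mu_{H,D},\overline\Delta_{H,D})$ order, the long exact sequence of cohomology sheaves to show that $\sigma$-semistable objects and their subobjects are sheaves in $\cT_\beta$, and local finiteness of walls to make the threshold for $\alpha$ depend only on ${\bf v}$. Your extra bookkeeping (passing to the sheaf image of a subobject via the see-saw property, and to the maximal destabilizing subsheaf of $\rH^{-1}(E^\bullet)$) only makes explicit steps the paper leaves implicit.
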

\begin{proof}
Since $\beta< \overline\mu_{H,D}({\bf v})$, the stability condition $\sigma_{\beta,\alpha}$ lies left of the vertical wall $\beta = \overline\mu_{H,D}({\bf v})$.  The walls for ${\bf v}$ are locally finite.  Considering a neighborhood of a stability condition on the vertical wall shows that there is some largest semicircular wall $W$ left of the vertical wall.  The set of $\sigma$-semistable objects is constant as $\sigma$ varies in the chamber between $W$ and the vertical wall.

It is therefore enough to show the following two things.  (1) If $E^\bullet \in \cA_\beta$ has $\ch E^{\bullet} = {\bf v}$ and is $\sigma_{\beta,\alpha}$-semistable for $\alpha \gg 0$ then $E^{\bullet}$ is an $(H,D)$-semistable sheaf.  (2) If $E$ is an $(H,D)$-semistable sheaf of character ${\bf v}$, then $E$ is $\sigma_{\beta,\alpha}$-semistable for $\alpha \gg 0$.  That is, we may pick $\alpha$ depending on $E$, and not just depending on ${\bf v}$.

(1) First suppose $E^\bullet\in \cA_\beta$ is $\sigma_{\beta,\alpha}$-semistable for $\alpha\gg 0$ and $\ch E^{\bullet} = {\bf v}$. If $E^\bullet$ is not a sheaf, then we have an interesting exact sequence $$0\to \rH^{-1}(E^{\bullet})[1] \to E^{\bullet} \to \rH^0(E^{\bullet})\to 0$$ in $\cA_\beta$.  Since $F:=\rH^{-1}(E^\bullet)\in \cF_\beta$, the formula for the Bridgeland slope shows that $$\mu_{\sigma_{\beta,\alpha}}(F[1])=\mu_{\sigma_{\beta,\alpha}}(F)\to \infty$$ as $\alpha\to \infty$.  On the other hand, since $G:= H^0(E^\bullet)\in \cT_\beta$ we have $\mu_{\sigma_{\beta,\alpha}}(G)\to -\infty$ as $\alpha\to \infty$, noting that $\rk(G) > 0$ since $\rk {\bf v} >0$.  This is absurd since $E^\bullet$ is $\sigma_{\beta,\alpha}$-semistable for $\alpha \gg 0$, and we conclude that $E:=E^\bullet\in \cT_\beta$ is a sheaf.

Similar arguments show that $E$ is $(H,D)$-semistable.  First suppose $E$ has a $\overline\mu_{H,D}$-stable subsheaf $F$ with $\overline\mu_{H,D}(F)> \overline\mu_{H,D}(E)$.  Then the corresponding exact sequence of sheaves $$0\to F\to E\to G\to 0$$ is actually a sequence of objects in $\cT_\beta$.  Indeed, any quotient of an object in $\cT_\beta$ is in $\cT_\beta$, and $F\in \cT_\beta$ by construction.  Thus this is actually an exact sequence in $\cA_\beta$.  The formula for the Bridgeland slope then shows that $\mu_{\sigma_{\beta,\alpha}}(F) > \mu_{\sigma_{\beta,\alpha}}(E)$ for $\alpha \gg 0$, violating the $\sigma_{\beta,\alpha}$-semistability of $E$.  We conclude that $E$ is $\overline\mu_{H,D}$-semistable.  To see that $E$ is $(H,D)$-semistable, suppose there is a sequence $$0\to F\to E\to G\to 0$$ of sheaves of the same $\overline\mu_{H,D}$-slope, but $\overline\Delta_{H,D}(F)<\overline\Delta_{H,D}(E)$.  Then the formula for the Bridgeland slope gives $\mu_{\sigma_{\beta,\alpha}}(F)> \mu_{\sigma_{\beta,\alpha}}(E)$ for \emph{every} $\alpha$, again contradicting the $\sigma_{\beta,\alpha}$-semistability of $E$ for large $\alpha$.

(2) Suppose $E$ is $(H,D)$-semistable of character ${\bf v}$, and suppose $F^\bullet$ is a subobject of $E$ in $\cA_\beta$.  Taking the long exact sequence in cohomology sheaves of the exact sequence $$0\to F^\bullet \to E\to G^\bullet\to 0$$ in $\cA_\beta$ gives an exact sequence of sheaves $$0\to \rH^{-1}(F^\bullet)\to 0\to \rH^{-1}(G^{\bullet})\to \rH^0(F^{\bullet}) \to E\to H^0(G^{\bullet})\to 0.$$  Therefore $\rH^{-1}(F^{\bullet})=0$, i.e. $F:=F^{\bullet}$ is a sheaf in $\cT_\beta$.  The $(H,D)$-semistability of $E$ then gives $\overline \mu_{H,D}(F) \leq \overline \mu_{H,D}(E)$, with $\overline \Delta_{H,D}(F)\geq \overline \Delta_{H,D}(E)$ in case of equality.  The formula for $\mu_{\sigma_{\beta,\alpha}}$ then shows that if $\alpha \gg 0$ we have $\mu_{\sigma_{\beta,\alpha}}(F) \leq \mu_{\sigma_{\beta,\alpha}}(E)$.  It follows from the finiteness of the walls that $E$ is actually $\sigma_{\beta,\alpha}$-semistable for large $\alpha$.
\end{proof}

In particular, if ${\bf v}\in K_{\num}(X)$ is the character of an $(H,D)$-semistable sheaf of positive rank, then there is some largest wall $W$ lying to the left of the vertical wall (or, possibly, there are no walls left of the vertical wall).  This wall is called the \emph{Gieseker wall}.  For stability conditions $\sigma$ in the open chamber $\cC$ bounded by the Gieseker wall and the vertical wall, we have $$M_\sigma({\bf v}) \cong M_{H,D}({\bf v}).$$  Therefore, any moduli space of twisted semistable sheaves can be recovered as a moduli space of Bridgeland semistable objects.

\section{Examples on \texorpdfstring{$\P^2$}{the projective plane}}\label{sec-exP2} In this subsection we investigate a couple of the first interesting examples of Bridgeland stability conditions and their relationship to birational geometry.  We focus here on the characters of some small Hilbert schemes of points on $\P^2$.  In these cases the definitions simplify considerably, and things can be understood explicitly.  

\subsection{Notation} Let $X=\P^2$, and fix the standard polarization $H$.  We take $D=0$; in general, the choice of twisting divisor is only interesting modulo the polarization, as adding a multiple of the polarization to $D$ only translates the $(H,D)$-slice.  The twisting divisor becomes more relevant in examples of higher Picard rank.  Additionally, since $K_{\P^2}$ is parallel to $H$, we may as well work with the ordinary slope and discriminant $$\mu = \frac{\ch_1}{r} \qquad \Delta= \frac{1}{2}\mu^2 - \frac{\ch_2}{r}$$ instead of the more complicated $\overline \mu_{H,0}$ and $\overline \Delta_{H,0}$.  With these conventions, if ${\bf v}$ and ${\bf w}$ are characters of positive rank then the wall $W({\bf v},{\bf w})$ has center $(s_W,0)$ and radius $\rho_W$ given by \begin{align*}s_W &= \frac{\mu({\bf v}) + \mu({\bf w})}{2} - \frac{\Delta({\bf v})-\Delta({\bf w})}{\mu({\bf v})-\mu({\bf w})}\\
\rho_W^2 &= (s_W- \mu({\bf v}))^2-2\Delta({\bf v}).
\end{align*}  
If we further let ${\bf v} = \ch I_Z$ be the character of an ideal of a length $n$ scheme $Z\in \P^{2[n]}$, then the formulas further simplify to \begin{align*}
s_W &= \frac{\mu({\bf w})}{2} + \frac{n-\Delta({\bf w})}{\mu({\bf w})}\\ \rho_W^2 &= s_W^2-2n.
\end{align*}
The main question to keep in mind is the following.

\begin{question}
Let $I_Z$ be the ideal sheaf of $Z\in \P^{2[n]}$.  For which stability conditions $\sigma$ in the slice is $I_Z$ a $\sigma$-semistable object?  What does the destabilizing sequence of $I_Z$ look like along the wall where it is destabilized?
\end{question}

Note that since $I_Z$ is a Gieseker semistable sheaf, it is $\sigma_{\beta,\alpha}$-semistable if $\alpha \gg 0$ and $\beta<0 = \mu(I_Z)$.  There will be some wall $W$ left of the vertical wall where $I_Z$ is destabilized by some subobject $F$. For stability conditions $\sigma$ below this wall, $I_Z$ is never $\sigma$-semistable.  Thus the region in the slice where $I_Z$ is $\sigma$-semistable is bounded by the wall $W$ and the vertical wall.  It potentially consists of several of the chambers in the wall-and-chamber decomposition of the slice.

\subsection{Types of walls} There are two very different ways in which an ideal sheaf $I_Z$ of length $n$ can be destabilized along a wall.  The simplest way $I_Z$ can be destabilized is if it is destabilized by an actual subsheaf, i.e. if there is an exact  sequence of sheaves $$0\to I_Y(-k)\to I_Z \to T\to 0$$ giving rise to the wall for some zero-dimensional scheme $Y$ of length $\ell$.  The character ${\bf w} = \ch I_W(-k)$ has $(r,\mu,\Delta) = (1,-k,\ell)$, so this wall has center $(s_W,0)$ with \begin{equation}\label{rank1center} s_W=-\frac{k}{2} -\frac{n-\ell}{k}.\end{equation}
A wall obtained in this way is called a \emph{rank one wall}.

On the other hand, subobjects of $I_Z$ in the categories $\cA_\beta$ need not be subsheaves of $I_Z$!  In particular, it is entirely possible that $I_Z$ is destabilized by a sequence $$0\to F\to I_Z\to G \to 0$$ where ${\bf w} = \ch F$ has $\rk{\bf w}\geq 2$.  Such destabilizing sequences, giving so-called \emph{higher rank walls}, are somewhat more troublesome to deal with.  It will be helpful to bound their size, which we now do. 

As in the proof of Theorem \ref{thm-largeVol}, the long exact sequence of cohomology sheaves shows that any subobject $F\subset I_Z$ in a category $\cA_\beta$ must actually be a sheaf (but not necessarily a subsheaf).  Let $K$ and $C$ be the kernel and cokernel, respectively, of the map of sheaves $F\to I_Z$, so that there is an exact sequence of sheaves  $$0\to K\to F\to I_Z\to C\to 0.$$ In order for $G$ to be in the categories $\cA_\beta$ along the wall $W = W({\bf w},{\bf v})$ (which must be the case by Proposition \ref{prop-destSeq}), it is necessary and sufficient that we have $K\in \cF_\beta$ and $C\in \cT_\beta$ for all $\beta$ along the wall.  Indeed, $K$ and $C$ are the cohomology sheaves of the mapping cone of the map $F\to I_Z$, so this follows from Exercise \ref{ex-exact}.  The sequence $$0\to F\to I_Z\to G\to 0$$ will be exact in the categories along the wall if $F$ is additionally in $\cT_\beta$ for $\beta$ along the wall.  These basic considerations lead to the following result.

\begin{lemma}[\cite{ABCH}, or see {\cite[Lemma 3.1 and Corollary 3.2]{Boetal}} for a generalization to arbitrary surfaces]\label{lem-higherRank}
If an ideal sheaf $I_Z$ of $n$ points in $\P^2$ is destabilized along a wall $W$ given by a subobject $F$ of rank at least $2$, then the radius $\rho_W$ of $W$ satisfies $$\rho_W^2 \leq \frac{n}{4}.$$
\end{lemma}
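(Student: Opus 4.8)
The plan is to extract, from a higher-rank destabilizing subobject $F$, enough numerical data to locate the wall $W$ precisely, and then to observe that the hypothesis $\rk F\ge 2$ forces $W$ to be small. I work in the $(H,0)$-slice on $\P^2$ as in \S\ref{sec-exP2}, so that $\overline\mu_{H,0}=\mu$ and $\overline\Delta_{H,0}=\Delta$, and I write ${\bf v}=\ch I_Z$, which has $r=1$, $\mu=0$, $\Delta=n$; thus $W$ is the semicircle with center $(s_W,0)$ and $\rho_W^2=s_W^2-2n$ (and we may assume $\rho_W>0$). By Proposition \ref{prop-destSeq} the destabilizing sequence $0\to F\to I_Z\to G\to 0$ is exact in $\cA_\beta$ for \emph{every} $(\beta,\alpha)\in W$. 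Taking the long exact sequence in cohomology sheaves and using $\rH^{-1}(I_Z)=0$ gives $\rH^{-1}(F)=0$, so $F$ is a sheaf concentrated in degree $0$ and $F\in\cT_\beta$ for every such $\beta$; writing $K=\ker(F\to I_Z)$ and $C=\coker(F\to I_Z)$ as sheaves, one has $\rH^{-1}(G)=K$ and $\rH^0(G)=C$, so $K\in\cF_\beta$ for every $\beta$ occurring on $W$. Since $F\ne 0$ the map $F\to I_Z$ is nonzero, so its image $N$ is a nonzero subsheaf of the torsion-free rank-one sheaf $I_Z$; hence $N$ is torsion-free of rank $1$ with $\ch_1(N)\le 0$, and from $0\to K\to F\to N\to 0$ we read $\rk K=\rk F-1\ge 1$ (in particular $K\ne 0$). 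Any torsion subsheaf of $F$ maps to $0$ in $I_Z$, hence lies in $K\in\cF_\beta$, which is torsion-free; so $F$ is torsion-free, and $F,K$ have $\mu$-Harder--Narasimhan filtrations.

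The second step converts membership in the torsion pair $(\cT_\beta,\cF_\beta)$ into slope inequalities at the two ends of $W$. Put $x_\pm=s_W\pm\rho_W$, so the $\beta$-values occurring on $W$ fill the open interval $(x_-,x_+)$ and $x_-x_+=s_W^2-\rho_W^2=2n$. Because $K\in\cF_\beta$ for all $\beta\in(x_-,x_+)$ we get $\mu_{\max}(K)\le x_-$, hence $\mu(K)=\ch_1(K)/\rk K\le x_-$; because $F\in\cT_\beta$ for all such $\beta$ we get $\mu_{\min}(F)\ge x_+$, hence $\mu(F)\ge x_+$. Writing $r=\rk F\ge 2$ and using $\ch_1(F)=\ch_1(K)+\ch_1(N)$ together with $\ch_1(N)\le 0$,
\[
r\,\mu(F)=\ch_1(F)=(r-1)\,\mu(K)+\ch_1(N)\le(r-1)\,x_-,
\]
so $x_+\le\mu(F)\le\tfrac{r-1}{r}\,x_-$, giving $x_+\le\tfrac{r-1}{r}\,x_-$.

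The last step is an elementary optimization. From $x_-x_+=2n>0$ the two endpoints have the same sign; and $x_+\le\tfrac{r-1}{r}x_-$ with $x_-\le x_+$ is impossible if they are positive, so $x_-\le x_+<0$. Set $a=-x_+$ and $b=-x_-$, so $0<a\le b$, $ab=2n$, and $a\ge\tfrac{r-1}{r}b\ge\tfrac12 b$ since $r\ge 2$. Then $2n=ab\le 2a^2$ forces $a\ge\sqrt n$, and $b-a=\tfrac{2n}{a}-a$ is decreasing in $a$, hence at most $2\sqrt n-\sqrt n=\sqrt n$. Therefore $\rho_W=\tfrac12(x_+-x_-)=\tfrac12(b-a)\le\tfrac12\sqrt n$, that is, $\rho_W^2\le n/4$.

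The cohomology bookkeeping in the first paragraph and the one-variable estimate at the end are routine. The essential point is that the destabilizing sequence persists along the whole wall (Proposition \ref{prop-destSeq}), since that is exactly what permits imposing $\mu_{\max}(K)\le x_-$ and $\mu_{\min}(F)\ge x_+$ at opposite extremes of $W$ at once; and the only place the higher-rank hypothesis is used is in guaranteeing that $K$ is a nonzero torsion-free sheaf whose slope is controlled, i.e. $\rk F\ge 2$ equivalently $\rk K\ge 1$.
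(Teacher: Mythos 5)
Your proof is correct and follows essentially the same route as the paper's: both rest on the persistence of the destabilizing sequence along $W$ (Proposition \ref{prop-destSeq}), the facts that $F\in\cT_\beta$ and $K=\ker(F\to I_Z)\in\cF_\beta$ for all $\beta$ on the wall, and the numerical relations $\rk K=\rk F-1$ and $c_1(K)\geq c_1(F)$ coming from the torsion cokernel. The only difference is cosmetic: the paper chains the resulting inequalities directly into $3\rho_W\leq -s_W$ and squares, while you repackage the same constraints as $a\geq b/2$, $ab=2n$ and optimize, arriving at the identical bound $\rho_W^2\leq n/4$.
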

\begin{proof}
We use the notation from above.  Since $I_Z$ is rank $1$ and torsion-free, a nonzero map $F\to I_Z$ has torsion cokernel.  Therefore $C$ is torsion, and it is no condition at all to have $C\in \cT_\beta$ along the wall.  We further deduce that $c_1(C)\geq 0$, so $c_1(K) \geq c_1(F)$ and $\rk(K) = \rk(F)-1$.  Let $(s_W,0)$ and $\rho_W$ be the center and radius of $W$.  Since $F\in \cT_\beta$ along the wall and $K\in \cF_\beta$ along the wall, we have  $$2\rho_W \leq \mu(F)-\mu(K) = \frac{c_1(F)}{\rk(F)}-\frac{c_1(K)}{\rk(K)}\leq - \frac{c_1(F)}{\rk(F)(\rk(F)-1)}\leq - \mu(F) \leq -s_W-\rho_W,$$ so $3\rho_W \leq -s_W$.  Squaring both sides, $9\rho_W^2 \leq s_W^2 = \rho_W^2+2n$ by the formula for the radius.  The result follows.
\end{proof}

\subsection{Small examples}  We now consider the stability of ideal sheaves of small numbers of points in $\P^2$ in detail.

\begin{example}[Ideals of 2 points]\label{ex-2ptBridgeland}
Let $I_Z$ be the ideal of a length $2$ scheme $Z\in \P^{2[2]}$.  Such an ideal fits in an exact sequence $$0\to \OO_{\P^2}(-1)\to I_Z\to \OO_L(-2)\to 0$$ where $L$ is the line spanned by $Z$.  If $W = W(\ch \OO_{\P^2}(-1),I_Z)$ is the wall defined by this sequence, then $Z$ is certainly not $\sigma$-semistable for stability conditions $\sigma$ inside $W$.  On the other hand, we claim that $I_Z$ is $\sigma$-semistable for stability conditions $\sigma$ on or above $W$.

To see this, we rule out the possibility that $I_Z$ is destabilized along some wall $W'$ which is larger than $W$.  The wall $W$ has center $(s_W,0)$ with $s_W = -5/2$ by Equation \ref{rank1center}.  Its radius is $\rho_W = 3/2$, so the wall $W$ passes through the point $(-1,0)$. If $W'$ is given by a rank $1$ subobject $I_Y(-k)$ then we must have $-k > -1$ in order for $I_Y(-k)$ to be in the categories $\cA_\beta$ along the wall $W'$.  This then forces $k=0$, which means $I_Y$ does not define a semicircular wall. This is absurd.

The other possibility is that $W'$ is a higher-rank wall.  But then by Lemma \ref{lem-higherRank}, $W'$ has radius $\rho_{W'}$ satisfying $\rho_{W'}^2 \leq 1/2$.  This contradicts that $W'$ is larger than $W$.
\end{example} 

Note that in the above example, if $\sigma$ is a stability condition on the wall $W$ then $I_Z$ is strictly $\sigma$-semistable and $S$-equivalent to any ideal $I_{Z'}$ where $Z'$ lies on the line spanned by $Z$.  Thus the set of $S$-equivalence classes of $\sigma$-semistable objects is naturally identified with $\P^{2*}$.  

\begin{example}[Ideals of $3$ collinear points]\label{ex-3ptBridgelandCollinear}
Let $I_Z$ be the ideal of a length $3$ scheme $Z\in \P^{2[3]}$ which is supported on a line.  As in Example \ref{ex-2ptBridgeland}, we claim that $I_Z$ is destablized by the sequence $$0\to \OO_{\P^2}(-1)\to I_Z\to \OO_L(-3)\to 0.$$  That is, if $W$ is the wall corresponding to the sequence, then $I_Z$ is $\sigma$-semistable for conditions $\sigma$ on or above the wall.  (From the existence of the sequence it is immediately clear that $I_Z$ is not $\sigma$-semistable below the wall.)

We compute $s_W = -7/2$ and $\rho_W = \frac{5}{2}$.  As in Example \ref{ex-2ptBridgeland}, we conclude that there is no larger rank $1$ wall.  Any higher rank wall  $W'$ would have $\rho_{W'}^2 \leq 3/2$, so there can be no larger higher rank wall either.  Therefore $I_Z$ is $\sigma$-semistable on and above $W$.
\end{example}

For the next example we will need one additional useful fact.

\begin{proposition}[{\cite[Proposition 6.2]{ABCH}}]\label{prop-lineBundle}
A line bundle $\OO_{\P^2}(-k)$ or a shifted line bundle $\OO_{\P^2}(-k)[1]$ is $\sigma_{\beta,\alpha}$-stable whenever it is in the category $\cA_\beta$.  Thus, $\OO_{\P^2}(-k)$ is $\sigma_{\beta,\alpha}$-stable if  $\beta < -k$, and $\OO_{\P^2}(-k)[1]$ is $\sigma_{\beta,\alpha}$-stable if $\beta \geq -k$.
\end{proposition}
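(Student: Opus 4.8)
The plan is to reduce to the case $k=0$ and then treat $\OO_{\P^2}$ and $\OO_{\P^2}[1]$ separately, recalling that $\OO_{\P^2}(-k)\in\cA_\beta$ exactly when $\beta<-k$ and $\OO_{\P^2}(-k)[1]\in\cA_\beta$ exactly when $\beta\ge -k$. Tensoring by $\OO_{\P^2}(k)$ is an autoequivalence of $D^b(\P^2)$; since $\ch^{(\beta+k)H}(E\otimes\OO_{\P^2}(k))=\ch^{\beta H}(E)$, it carries $\cA_\beta$ to $\cA_{\beta+k}$ and matches $\sigma_{\beta,\alpha}$ with $\sigma_{\beta+k,\alpha}$ under $E\leftrightarrow E\otimes\OO_{\P^2}(k)$, hence preserves (semi)stability. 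As $\OO_{\P^2}(-k)\otimes\OO_{\P^2}(k)=\OO_{\P^2}$, it suffices to prove that $\OO_{\P^2}$ is $\sigma_{\beta,\alpha}$-stable for $\beta<0$ and that $\OO_{\P^2}[1]$ is $\sigma_{\beta,\alpha}$-stable for $\beta\ge 0$.

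\emph{The sheaf $\OO_{\P^2}$.} As a rank-one torsion-free sheaf, $\OO_{\P^2}$ is Gieseker-stable, so by Theorem~\ref{thm-largeVol} it is $\sigma_{\beta,\alpha}$-stable for $\beta<0$ and $\alpha\gg 0$. Its character $(1,0,0)$ being primitive, by the discussion of \S\ref{ssec-walls} it can fail to be $\sigma$-stable in the region $\beta<0$ only by lying on an actual wall $W$ there, carrying a strictly semistable sequence $0\to F\to\OO_{\P^2}\to G\to 0$ of $\sigma$-semistable objects of equal slope. Exactly as in the proof of Theorem~\ref{thm-largeVol}, the long exact sequence of cohomology sheaves forces $F$ to be a sheaf of rank at least $1$ (a torsion $F$ would map to $0$ in $\OO_{\P^2}$, hence be $0$ as a subobject). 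If $\rk F\ge 2$ then we are destabilizing the ideal sheaf of $n=0$ points by a subobject of rank $\ge 2$, which by Lemma~\ref{lem-higherRank} would force $\rho_W^2\le 0$, impossible. Hence $\rk F=1$, so $F$ is a genuine subsheaf of $\OO_{\P^2}$ and therefore $F\cong I_Z(m)$ for some zero-dimensional $Z$ and some $m\le 0$. A direct computation of the Bridgeland slope shows $\mu_{\sigma}(I_Z(m))\le\mu_{\sigma}(\OO_{\P^2})$, with equality precisely when $m=0$ and $Z=\emptyset$, i.e.\ when $F=\OO_{\P^2}$; this contradicts the properness of $F$ on the wall. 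So no actual wall exists in the region $\beta<0$, and $\OO_{\P^2}$ stays $\sigma_{\beta,\alpha}$-stable throughout it.

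\emph{The shifted sheaf $\OO_{\P^2}[1]$.} When $\beta=0$, a short inspection of the long exact cohomology sequence shows that $\OO_{\P^2}[1]$ has no proper nonzero subobject in $\cA_0$ at all: such a subobject would produce either a nonzero torsion sheaf lying inside a member of $\cF_0$, or $\OO_{\P^2}$ itself lying inside a member of $\cF_0$ with a torsion-free quotient of positive slope, both forbidden because $\cF_0$ contains no nonzero torsion and has Mumford-slopes bounded by $0$. Hence $\OO_{\P^2}[1]$ is vacuously $\sigma_{0,\alpha}$-stable. For $\beta>0$ I would transport the previous case through the local-duality anti-autoequivalence $\mathbb{D}=R\sHom(-,\OO_{\P^2})[1]$ of $D^b(\P^2)$: one has $\mathbb{D}(\OO_{\P^2})=\OO_{\P^2}[1]$, and $\mathbb{D}$ carries the $(H,0)$-slice to itself by $(\beta,\alpha)\mapsto(-\beta,\alpha)$ while preserving (semi)stability (subobjects becoming quotients, for which stability is symmetric), so the $\sigma_{\beta,\alpha}$-stability of $\OO_{\P^2}$ for $\beta<0$ yields that of $\OO_{\P^2}[1]$ for $\beta>0$.

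\emph{Main obstacle.} The delicate step is the higher-rank case ($\beta<0$, $\rk F\ge 2$): excluding destabilizing subobjects of $\OO_{\P^2}$ that are not subsheaves. This is exactly Lemma~\ref{lem-higherRank}, whose proof propagates the destabilizing sequence along the whole wall (Proposition~\ref{prop-destSeq}) and plays the constraints $F\in\cT_\beta$ and $\ker(F\to\OO_{\P^2})\in\cF_\beta$ against the geometry of the wall. Everything else is bookkeeping with twisted Chern characters and the slope formula of Definition~\ref{def-Bridgeland}.
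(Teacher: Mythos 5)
The survey does not actually prove this statement; it is quoted verbatim from \cite[Proposition 6.2]{ABCH}, so there is no in-paper argument to compare against. Judged on its own terms, your reconstruction is essentially sound, and the sheaf half is complete: the reduction to $k=0$ by tensoring is correct (and standard), the long exact sequence of cohomology sheaves does force a destabilizer of $\OO_{\P^2}$ to be a sheaf of positive rank, the observation that Lemma~\ref{lem-higherRank} applies with $n=0$ to give $\rho_W^2\le 0$ is a nice way to kill higher-rank walls, and the monotonicity of $t\mapsto t-(\alpha^2+2\Delta)/t$ does show that every proper rank-one subsheaf $I_Z(m)$, $m\le 0$, has strictly smaller Bridgeland slope than $\OO_{\P^2}$ throughout $\beta<0$. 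Two points deserve more care. First, at $\beta=0$ your dichotomy for subobjects of $\OO_{\P^2}[1]$ is not quite exhaustive: the case where the quotient sheaf $M/\OO_{\P^2}$ is a nonzero \emph{zero-dimensional} torsion sheaf has $c_1=0$ and is not excluded by either of your two forbidden configurations; you need the extra step $M\subset M^{**}=\OO_{\P^2}$ (or, more simply, $\Ext^1(\OO_Z,\OO_{\P^2})=0$ for $Z$ zero-dimensional) to conclude $M=\OO_{\P^2}$.

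Second, and more seriously, the case $\beta>0$ is carried entirely by the assertion that $\mathbb{D}=R\sHom(-,\OO_{\P^2})[1]$ exchanges $\sigma_{\beta,\alpha}$- and $\sigma_{-\beta,\alpha}$-(semi)stability. This is true and appears in the literature, but it is not available anywhere in this paper and is more delicate than your phrasing suggests: $\mathbb{D}$ does \emph{not} carry $\cA_\beta$ to $\cA_{-\beta}$ on the nose (for instance $\mathbb{D}(\OO_p)=\OO_p[-1]$ leaves the heart), so ``subobjects become quotients'' has to be argued at the level of stability of individual objects, with separate treatment of zero-dimensional pieces and of the boundary case $\mu=\beta$ --- exactly the case relevant to $\OO_{\P^2}\in\cF_0$. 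Note also that the $\beta>0$ statement is not vacuous: the Euler sequence exhibits $T_{\P^2}$ as a genuine proper subobject of $\OO_{\P^2}[1]$ in $\cA_\beta$ for $1\le\beta<3/2$, so \emph{some} argument is required there. Either prove the duality statement carefully or replace it by a direct analysis of subobjects of $\OO_{\P^2}[1]$ parallel to your sheaf case; as it stands this step is a citation to an unproved (though correct) black box rather than a proof.
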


In the next example we see our first example of an ideal sheaf destabilized by a higher rank subobject.

\begin{example}[Ideals of $3$ general points]\label{ex-3ptBridgeland} Let $I_Z$ be the ideal of a length $3$ scheme $Z\in \P^{2[3]}$ which is \emph{not} supported on a line.  In this case, the ideal $I_Z$ has a minimal resolution of the form $$0\to \OO_{\P^2}(-3)^2\to \OO_{\P^2}(-2)^3\to I_Z\to 0$$ or, equivalently, there is a distinguished triangle $$\OO_{\P^2}(-2)^3\to I_Z\to \OO_{\P^2}(-3)^2[1] \to \cdot.$$ Consider the wall $W = W(\OO_{\P^2}(-2),I_Z)$ defined by this sequence.  It has center at $(s_W,0)$ with $s_W = -5/2$, and its radius is $1/2$.  By Proposition \ref{prop-lineBundle} and Exercise \ref{ex-exact}, the above triangle gives an exact sequence $$0\to \OO_{\P^2}(-2)^3\to I_Z\to \OO_{\P^2}(-3)^2[1]\to 0$$ in the categories $\cA_\beta$ along the wall.  Then for any $\sigma$ on the wall, $I_Z$ is an extension of $\sigma$-semistable objects of the same slope, and hence is $\sigma$-semistable.  It follows that $I_Z$ is destabilized precisely along $W$.
\end{example}

\begin{remark}[Correspondence between birational geometry and Bridgeland stability] In \cite[\S 10]{ABCH}, this chain of examples is continued in great detail.  The regions of $\sigma$-semistability  of ideal sheaves $I_Z$ of up to $9$ points are completely determined by similar methods.  A remarkable correspondence between these regions of stability and the stable base locus decomposition was observed and conjectured to hold in general.  The following result has since been proved by Li and Zhao.

\begin{theorem}[\cite{LiZhao}]
Let $Z\in \P^{2[n]}$.  Let $W$ be the Bridgeland wall where the ideal sheaf $I_Z$ is destabilized.  Also, let $yH-\frac{1}{2}B$ be the ray in the Mori cone past which the point $Z\in \P^{2[n]}$ enters the stable base locus.  Then $$s_W = -y-\frac{3}{2}.$$
\end{theorem}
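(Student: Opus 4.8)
The plan is to route everything through the positivity lemma of Bayer and Macr\`i. Fix ${\bf v}=\ch I_Z$, the class of an ideal of $n$ points, and work in the $(H,0)$-slice. For any stability condition $\sigma$ lying on the large-volume side of the wall $W$ (so that $I_Z$ is $\sigma$-semistable), the moduli space $M_\sigma({\bf v})$ is a projective variety containing the point $[I_Z]$ and birational to $\P^{2[n]}$ by a map defined near $[I_Z]$, and the positivity lemma attaches to $\sigma$ a nef class $\ell_\sigma$ on it. The first step is purely computational: using the Chern-character formulas for $Z_{\beta,\alpha}$ from \S\ref{ssec-surfaceConds} and the intersection pairing on $N^1(\P^{2[n]})$ computed in \S\ref{sec-classical}, evaluate $\ell_\sigma$ on the test curves $C_0$ (contracted by Hilbert--Chow) and $\tilde H_{[n]}$. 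After normalizing so that $\ell_\sigma\cdot C_0=1$ — which forces the coefficient of $B$ to be $-\frac{1}{2}$, since $C_0\cdot B=-2$ and $C_0\cdot H^{[n]}=0$ — one finds $\ell_\sigma=y(\sigma)\,H^{[n]}-\frac{1}{2}B$ with $y(\sigma)$ an explicit affine function of $\sigma$; the constant $-\frac{3}{2}$ that will appear is exactly the effect of the shift by $\frac{1}{2}K_{\P^2}=-\frac{3}{2}H$ built into the twisted character $\overline\ch$ defining $Z_{\beta,\alpha}$.

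The second step passes from individual $\sigma$ to the wall $W$. By Proposition~\ref{prop-destSeq}, a destabilizing sequence $0\to F\to I_Z\to G\to 0$ persists along the whole of $W$, so ${\bf w}=\ch F$ is constant there; and by the positivity lemma a curve $C$ satisfies $\ell_\sigma\cdot C=0$ precisely when the objects it parametrizes are $S$-equivalent. Hence along $W$ the class $\ell_\sigma$ is orthogonal to a fixed curve class in $\P^{2[n]}$, so — as $\P^{2[n]}$ has Picard rank two and $\ell_\sigma$ is nef — all $\ell_\sigma$ with $\sigma\in W$ lie on one and the same ray, a face of $\Nef(\P^{2[n]})$. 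Feeding this into Step~1, that ray is $\bigl(-s_W-\frac{3}{2}\bigr)H^{[n]}-\frac{1}{2}B$, where $(s_W,0)$ is the center of $W$; write $y_W:=-s_W-\frac{3}{2}$. It then remains to check that $y_WH^{[n]}-\frac{1}{2}B$ is precisely the ray of the stable base locus decomposition past which $[I_Z]$ enters $\BBs$.

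For the direction $[I_Z]\in\BBs$: for $\sigma$ on $W$ the sheaf $I_Z$ is strictly $\sigma$-semistable and $S$-equivalent to every extension of $G$ by $F$, so a rational curve $C_Z$ through $[I_Z]$ parametrizing such extensions is contracted to a point by the natural morphism $\pi_W$ from the moduli space just above $W$ to the moduli space on $W$. Since $\ell_{\sigma_W}=\pi_W^{*}(\text{ample})$ we get $\ell_{\sigma_W}\cdot C_Z=0$, hence $D\cdot C_Z<0$ for every $D=yH^{[n]}-\frac{1}{2}B$ with $y<y_W$, so $C_Z\subseteq\BBs(D)$ and in particular $[I_Z]\in\BBs(D)$. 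For the reverse direction, $[I_Z]\notin\BBs(D)$ when $y\ge y_W$: here the corresponding $\sigma$ lies on the large-volume side of $W$, so $I_Z$ is $\sigma$-semistable (this is the defining property of $W$, supplemented by the wall analysis of Lemma~\ref{lem-higherRank} and Examples~\ref{ex-2ptBridgeland}--\ref{ex-3ptBridgeland} to exclude larger walls in practice), whence $[I_Z]\in M_\sigma({\bf v})$; using the projectivity of the Bridgeland moduli spaces on $\P^2$ from \cite{ABCH} and the identification of $\ell_\sigma$ with the pullback of an ample class under the contraction realizing $M_\sigma({\bf v})$, the point $[I_Z]$ stays out of the indeterminacy locus of $|mD|$ and hence out of $\BBs(D)$.

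The main obstacle is this last direction: the positivity lemma alone yields only the nefness of $\ell_\sigma$, and turning that into a statement about base loci — i.e. exhibiting sections of $yH^{[n]}-\frac{1}{2}B$ for $y\ge y_W$ that do not vanish at $[I_Z]$ — requires honest information about the contractions and flips of $\P^{2[n]}$ supplied by the moduli spaces $M_\sigma({\bf v})$, or equivalently the tautological (Brill--Noether) interpretation of the divisors $\tilde D^{[n]}$; this is the technical heart of \cite{LiZhao}. An equivalent, also workable route on $\P^2$ is to verify directly that each $yH^{[n]}-\frac{1}{2}B$ with $y\ge y_W$ is semiample with associated morphism contracting no curve through $[I_Z]$, which reduces to a cohomology-vanishing statement for $I_Z$ twisted by a suitable line bundle, itself equivalent via the large-volume limit (Theorem~\ref{thm-largeVol}) to $\sigma$-semistability of $I_Z$.
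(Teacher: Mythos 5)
First, a caveat about the comparison: this survey does not prove the statement at all. It is quoted from \cite{LiZhao}, and the only verification offered here is the observation that the cases $n=2,3$ follow by matching the classical stable base locus computations of Examples \ref{ex-P2Hilb2} and \ref{ex-P2Hilb3} against the wall computations of Examples \ref{ex-2ptBridgeland}, \ref{ex-3ptBridgelandCollinear}, and \ref{ex-3ptBridgeland}. So your proposal cannot be measured against an in-paper argument; it has to stand on its own. Its computational backbone is sound: the Bayer--Macr\`i divisor attached to a condition with center $(s_W,0)$ has class a multiple of $\frac{1}{2}K_X^{[n]}-s_WH^{[n]}-D^{[n]}-\frac{1}{2}B$ (see \S\ref{sec-positivity}), which on $\P^2$ with $D=0$ and $K_{\P^2}=-3H$ becomes $(-s_W-\frac{3}{2})H^{[n]}-\frac{1}{2}B$; this is exactly where the $-\frac{3}{2}$ comes from, and your normalization against $C_0$ and $\tilde H_{[n]}$ reproduces it. The overall architecture --- positivity lemma for nefness, $S$-equivalent curves for orthogonality, wall-crossing for the base locus --- is indeed the architecture of the actual proof in \cite{LiZhao}.

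The gap is the one you name yourself, and it is not a formality. To show $[I_Z]\notin\BBs(yH^{[n]}-\frac{1}{2}B)$ for $y\geq y_W$ you must produce sections, and Theorem \ref{thm-positivity} produces none: a nef divisor need not be semiample, and nothing quoted in this survey identifies $M_\sigma({\bf v})$ for $\sigma$ below the Gieseker wall with the birational model of $\P^{2[n]}$ cut out by $|m(yH^{[n]}-\frac{1}{2}B)|$. That identification (together with a Dr\'ezet--Le Potier-type nonemptiness and smoothness statement for Bridgeland moduli on $\P^2$) is precisely the content of \cite{LiZhao}, so deferring to ``the technical heart of \cite{LiZhao}'' inside a proof of a theorem attributed to \cite{LiZhao} is circular. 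A second, unflagged issue sits in the other direction: your curve $C_Z$ of extensions of $G$ by $F$ lives naturally in $M_{\sigma^+}({\bf v})$ for $\sigma^+$ just above $W$, and its general member is a $\sigma^+$-semistable object that need not be an ideal sheaf, so $C_Z$ need not lie in $\P^{2[n]}$ at all; to make sense of $D\cdot C_Z<0$ for $D\in N^1(\P^{2[n]})$ and to conclude $[I_Z]\in\BBs(D)$ you need the intermediate wall crossings between the Gieseker chamber and $W$ to be isomorphisms in codimension one, so that divisors and their sections can be transported between the models. Neither point is fatal to the strategy --- both are resolved in \cite{LiZhao} --- but as written the proposal is an outline of that paper rather than a proof.
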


Therefore, computations in Bridgeland stability provide a dictionary between semistability and birational geometry.  Compare with Examples \ref{ex-P2Hilb2}, \ref{ex-P2Hilb3}, \ref{ex-2ptBridgeland}, \ref{ex-3ptBridgelandCollinear}, \ref{ex-3ptBridgeland}, which establish the cases $n=2,3$ of the result.

More conceptually, Li and Zhao prove that the alternate birational models of any moduli space of sheaves on $\P^2$ can be interpreted as a Bridgeland moduli space, and they match up the walls in the Mori chamber decomposition of the effective cone with the walls in the wall-and-chamber decomposition of the stability manifold.    As a consequence, they are able to give new computations of the effective, movable, and ample cones of divisors on these spaces.  A crucial ingredient in this program is the smoothness of these Bridgeland moduli spaces, as well as a Dr\'ezet-Le Potier type classification of characters ${\bf v}$ for which Bridgeland moduli spaces are nonempty \cite[Theorems 0.1 and 0.2]{LiZhao}.  
\end{remark}

The next exercise computes the Gieseker wall for a Hilbert scheme of points on $\P^2$.  This is the easiest case of the main problem we will discuss in the next section.

\begin{exercise}
Following Examples \ref{ex-2ptBridgeland} and \ref{ex-3ptBridgelandCollinear}, show that the largest wall where some ideal sheaf $I_Z$ of $n$ points is destabilized is the wall $W(\ch \OO_{\P^2}(-1),I_Z)$.  Furthermore, an ideal $I_Z$ is destabilized along this wall if and only if $Z$ lies on a line.
\end{exercise}

\begin{remark}
A similar program to the above has also been undertaken on some other rational surfaces such as Hirzebruch and del Pezzo surfaces.  See \cite{BC}.
\end{remark}

\section{The positivity lemma and nef cones}\label{sec-positivity}

We close the survey by discussing the positivity lemma of Bayer and Macr\`i and recent applications of this tool to the computation of cones of nef divisors on Hilbert schemes of points and moduli spaces of sheaves.  This provides an example where Bridgeland stability provides insight that at present is not understood from a more classical point of view.

\subsection{The positivity lemma}  The positivity lemma is a tool for constructing nef divisors on moduli spaces of Bridgeland-semistable objects.  On a surface, (twisted) Gieseker moduli spaces can themselves be viewed as Bridgeland moduli spaces, so this will also allow us to construct nef divisors on classical moduli spaces.  As with the construction of divisors on Gieseker moduli spaces, the starting point is to define a divisor on the base of a family of objects.  When the moduli space carries a universal family, the family can be used to define a divisor on the moduli space.

In this direction, let $\sigma=(Z,\cA)$ be a stability condition on $X$, and let $\cE/S$ be a flat family of $\sigma$-semistable objects of character ${\bf v}$ parameterized by a proper algebraic space $S$.  We define a numerical divisor class $D_{\sigma,\cE}\in N^1(S)$ on $S$ depending on $\cE$ and $\sigma$ by specifying the intersection $D_{\sigma,\cE}.C$ with every curve class $C\subset S$.  Let $\Phi_\cE: D^b(S) \to D^b(X)$ be the Fourier-Mukai transform with kernel $\cE$, defined by $$\Phi_{\cE}(F) = q_*(p^*F \te \cE),$$ where $p:S\times X\to S$ and $q:S\times X\to X$ are the projections and all the functors are derived.  Then we declare $$D_{\sigma,\cE}.C = \Im \left(- \frac{Z(\Phi_\cE(\OO_C))}{Z({\bf v})}\right).$$ 

\begin{remark}
Note that if $Z({\bf v}) = -1$ then the formula becomes $$D_{\sigma,\cE}.C = \Im(Z(\Phi_\cE(\OO_C))).$$ If $\Phi_{\cE}(\OO_C)\in \cA$, then $D_{\sigma,\cE}.C\geq 0$ would follow from the positivity of the central charge.  While it is not necessarily true that $\Phi_{\cE}(\OO_C)\in \cA$, this fact nonetheless plays an important role in the proof of the positivity lemma.
\end{remark}

The positivity lemma states that this assignment actually defines a nef divisor on $S$.  Furthermore, there is a simple criterion to detect the curves $C$ meeting the divisor orthogonally.

\begin{theorem}[Positivity lemma, Theorem 4.1 \cite{BM2}]\label{thm-positivity}
The above assignment defines a well-defined numerical divisor class $D_{\sigma,\cE}$ on $S$.  This divisor is nef, and a complete, integral curve $C\subset S$ satisfies $D_{\sigma,\cE}. C = 0$ if and only if the objects parameterized by two general points of $C$ are $S$-equivalent with respect to $\sigma$.
\end{theorem}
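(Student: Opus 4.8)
The plan is to establish the three assertions in turn: that $C\mapsto D_{\sigma,\cE}.C$ is the restriction to curves of an honest numerical divisor class, that this class is nef, and that it is orthogonal to exactly the stated curves.

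\emph{Well-definedness and reductions.} The Fourier--Mukai functor $\Phi_\cE$ induces a homomorphism on numerical Grothendieck groups, so the class of $\Phi_\cE(\OO_C)$ in $K_{\num}(X)$ depends only on $[\OO_C]$, and by Grothendieck--Riemann--Roch only on the numerical class of $C$; since $Z$ is $\R$-linear, $C\mapsto \Im(-Z(\Phi_\cE(\OO_C))/Z({\bf v}))$ is then additive and numerical, hence extends to a unique class $D_{\sigma,\cE}\in N^1(S)$. This step is formal. For nefness it suffices to treat an integral curve $C\subset S$; pulling $\cE$ back along the normalization $\nu\colon\tilde C\to C$ and using the projection formula $\Phi_\cE(\nu_*\OO_{\tilde C})=\Phi_{\nu^*\cE}(\OO_{\tilde C})$ — noting that $\OO_C$ and $\nu_*\OO_{\tilde C}$ differ by a finite-length sheaf, which $\Phi_\cE$ carries to an iterated extension of the $\sigma$-semistable objects $\cE_c$ of class ${\bf v}$, and that such objects contribute $0$ to $\Im(-Z(-)/Z({\bf v}))$ — reduces us to $C$ a smooth projective curve. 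Replacing $Z$ by $cZ$ for a suitable $c\in\C^*$ and rotating the slicing accordingly changes neither $D_{\sigma,\cE}$ (a ratio) nor the $\sigma$-semistable objects, so we may assume $Z({\bf v})=-1$; then ${\bf v}$ has phase $1$, class-${\bf v}$ semistable objects sit at the top of the heart $\cA$, and $D_{\sigma,\cE}.C=\Im Z(\Phi_\cE(\OO_C))$.

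\emph{The key lemma and nefness.} The crux, foreshadowed in the remark after the statement, is that although $w:=\Phi_\cE(\OO_C)=R\pi_{X*}(\cE|_{C\times X})$ need not lie in $\cA$, it lies ``just above'' $\cA$: its cohomology with respect to the t-structure of $\sigma$ is concentrated in degrees $0$ and $1$, and the degree-$1$ object $\mathrm{H}^1_{\cA}(w)$ is $\sigma$-semistable of phase exactly $1$. Granting this, the triangle $\mathrm{H}^0_{\cA}(w)\to w\to \mathrm{H}^1_{\cA}(w)[-1]$ gives $\Im Z(w)=\Im Z(\mathrm{H}^0_{\cA}(w))-\Im Z(\mathrm{H}^1_{\cA}(w))$; the first term is $\ge 0$ by positivity of the central charge on $\cA$, and the second is $0$ because $Z(\mathrm{H}^1_{\cA}(w))$ is a negative real number. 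Hence $D_{\sigma,\cE}.C\ge 0$. Proving the lemma is the main obstacle: one must use that $\cE$ restricts on every fibre $\{c\}\times X$ to a $\sigma$-semistable object of the same phase $1$, the resulting vanishing $\Hom^{<0}(\cE_c,\cE_{c'})=0$, and the fact that $\OO_C$ has cohomological dimension one on the curve, to run a d\'evissage showing that ``integrating'' such a family over $C$ cannot push a graded piece past phase $1$ or below phase $0$. The rest of the nefness statement is bookkeeping around the positivity axiom.

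\emph{The orthogonality criterion.} From the above, $D_{\sigma,\cE}.C=\Im Z(\mathrm{H}^0_{\cA}(w))$, which vanishes exactly when $\mathrm{H}^0_{\cA}(w)$ is also $\sigma$-semistable of phase $1$ — i.e. when $w$ is an extension of two $\sigma$-semistable phase-$1$ objects, one in degree $0$ and one in degree $1$. If the objects $\cE_c$ at a general point of $C$ are $S$-equivalent to a common polystable object $\bigoplus_k S_k^{\oplus m_k}$, one filters $\cE|_{C\times X}$ (generically, then spreads out) by families of the stable factors $S_k$; each such family is generically a twist $S_k\boxtimes L$ by a line bundle on $C$, since $S_k$ is simple, and $R\pi_{X*}(S_k\boxtimes L)=S_k^{\oplus h^0(L)}\oplus S_k[-1]^{\oplus h^1(L)}$ has both $\cA$-cohomology sheaves semistable of phase $1$; hence so has $w$, and $D_{\sigma,\cE}.C=0$. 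Conversely, if $D_{\sigma,\cE}.C=0$, then for each $c$ the surjection $\OO_C\twoheadrightarrow\OO_c$ gives a morphism $w\to \cE_c$ whose cone is $\Phi_\cE(\OO_C(-c))[1]$, again governed by the lemma; comparing t-cohomology shows that every $\cE_c$ must share its Jordan--H\"older factors with a fixed $\sigma$-semistable phase-$1$ object extracted from $w$, and since $\ch\cE_c={\bf v}$ is fixed this pins the general $\cE_c$ down to a single $S$-equivalence class. Making these ``general $c$'' statements and the Jordan--H\"older bookkeeping precise — not the positivity itself — is where this half of the theorem takes some care.
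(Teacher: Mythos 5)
First, a caveat on what ``the paper's own proof'' means here: the survey does not prove this statement at all --- it is quoted from Bayer--Macr\`i \cite{BM2} with only the remark that $\Phi_\cE(\OO_C)$ need not lie in $\cA$ --- so your proposal can only be measured against the proof in the cited source. Measured that way, your architecture matches it exactly: reduce well-definedness to linearity of $Z$ and invariance of $[\Phi_\cE(\OO_C)]$ under numerical equivalence; pass to the normalization of an integral curve; rescale so that $Z({\bf v})=-1$; prove that $w=\Phi_\cE(\OO_C)$ lies in $\cP([0,1])$ (equivalently, has $\cA$-cohomology only in degrees $0$ and $1$ with $\rH^1_{\cA}(w)$ semistable of phase $1$); read off $D_{\sigma,\cE}.C=\Im Z(\rH^0_{\cA}(w))\geq 0$; and characterize orthogonality through Jordan--H\"older factors. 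Your reductions and the derivation of nefness from the key lemma are all correct.

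The gap is that the one statement carrying essentially all the content --- your key lemma --- is asserted rather than proved, and the ``d\'evissage'' you gesture at is not quite the right mechanism. In \cite{BM2} the containment $w\in\cP([0,1])$ is established via the two vanishings $\Hom(Q,w)=0$ for $Q$ semistable of phase $>1$ and $\Hom(w,Q')=0$ for $Q'$ semistable of phase $<0$, which characterize $\cP([0,1])$. By adjunction for the projection $p\colon C\times X\to X$ these become $\Hom(Lp^*Q,\cE)$ and $\Hom(\cE,\,p^*Q'\te \omega_C[1])$ on $C\times X$, and a filtration of $\cE$ over the curve (together with the vanishing of negative self-Exts of the fibres and the one-dimensionality of $C$) reduces them to the fibrewise vanishings $\Hom(Q,\cE_c)=0$ and $\Hom(\cE_c,Q'[1])=0$, which hold because each $\cE_c$ is semistable of phase $1$. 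Writing out that local-to-global step is the bulk of the argument; without it the nefness claim is unsupported. A secondary gap sits in the equality case: in the forward direction you cannot simply filter $\cE|_{C\times X}$ by families of the stable factors, since Jordan--H\"older filtrations are not unique and the factors of the generic fibre need not be defined over the generic point of $C$ (monodromy can permute them); and the converse requires the key lemma for every twist $\OO_C(-c)$ plus the Jordan--H\"older comparison you explicitly defer. In short, you have the correct skeleton of the Bayer--Macr\`i proof, but the two places where you say ``this is the main obstacle'' are exactly the places where the proof still has to happen.
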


If the moduli space $M_{\sigma}({\bf v})$ carries a universal family $\cE$, then Theorem \ref{thm-positivity} constructs a nef divisor $D_{\sigma,\cE}$ on the moduli space.  In fact, the divisor does not depend on the choice of $\cE$; we will see this in the next subsection.

\begin{remark}
If multiplies of $D_{\sigma,\cE}$ define a morphism from $S$ to projective space, then the curves $C$ contracted by this morphism are characterized as the curves with $D_{\sigma,\cE}. C = 0$.  Thus, in a sense, all the interesting birational geometry coming from such a nef divisor $D_{\sigma,\cE}$ is due to $S$-equivalence.

Unfortunately, in general, a nef divisor does not necessarily give rise to a morphism---multiples of the divisor do not necessarily have any sections at all.  However, in such cases the positivity lemma is especially interesting.  Indeed, one of the easiest ways to construct nef divisors is to pull back ample divisors by a morphism (recall Examples \ref{ex-nefpullback} and \ref{ex-nefCompute}).  The positivity lemma can potentially produce nef divisors not corresponding to any any map at all, in which case nefness is classically more difficult to check.
\end{remark}

\subsection{Computation of divisors} 
It is interesting to relate the Bayer-Macr\`i divisors $D_{\sigma,\cE}$ with the determinantal divisors on a base $S$ arising from a family $\cE/S$.  Now would be a good time to review \S \ref{ssec-lineBundles}.  Recall that the Donaldson homomorphism is a map $$\lambda_{\cE} : {\bf v}^\perp\to N^1(S)$$ depending on a choice of family $\cE/S$, where ${\bf v}^\perp \subset K_{\num}(X)_\R$.  Reviewing the definition of $\lambda_{\cE}$, the definition only actually depends on the class of $\cE \in K^0(S\times X)$, so it immediately extends to the case where $\cE$ is a family of $\sigma$-semistable objects.  

Since the Euler pairing $(-,-)$ is nondegenerate on $K_{\num}(X)_\R$, any linear functional on $K_{\num}(X)_\R$ vanishing on ${\bf v}$ can be represented by a vector in ${\bf v}^\perp$.  In particular, there is a unique vector ${\bf w}_{Z}\in {\bf v}^\perp$ such that $$\Im\left(-\frac{Z({\bf w})}{Z({\bf v})}\right)=({\bf w}_{Z},{\bf w})$$ holds for all ${\bf w}\in K_{\num}(X)_\R$. Note that the definition of ${\bf w}_Z$ is essentially purely linear-algebraic, and makes no reference to $S$ or ${\cE}$.  
The next result shows that the Bayer-Macr\`i divisors are all determinantal.

\begin{proposition}[{\cite[Proposition 4.4]{BM2}}]
We have $$D_{\sigma,\cE} = \lambda_\cE({\bf w}_{Z}).$$
\end{proposition}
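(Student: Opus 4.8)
The plan is to compare the two numerical classes $D_{\sigma,\cE}$ and $\lambda_\cE({\bf w}_{Z})$ by pairing each against an arbitrary curve class $C\subset S$; since a numerical divisor class is determined by its intersection numbers with curves, this suffices. Unwinding the definition of ${\bf w}_{Z}$, one has immediately
$$D_{\sigma,\cE}.C \;=\; \Im\!\left(-\frac{Z(\Phi_\cE(\OO_C))}{Z({\bf v})}\right) \;=\; \bigl({\bf w}_{Z},\,[\Phi_\cE(\OO_C)]\bigr),$$
where $[\Phi_\cE(\OO_C)]$ denotes the numerical class of the (derived) Fourier--Mukai image; this step only uses that $Z$ is $\R$-linear and descends to $K_{\num}(X)$, and that $D_{\sigma,\cE}.C$ depends only on the numerical class of $C$ --- both granted by the positivity lemma (Theorem~\ref{thm-positivity}). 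Thus it is enough to prove the ``determinantal'' identity
$$\lambda_\cE({\bf w}).C \;=\; \bigl({\bf w},\,[\Phi_\cE(\OO_C)]\bigr) \;-\; \chi(\OO_C)\,({\bf w},{\bf v})$$
for every ${\bf w}\in K_{\num}(X)$ and every curve $C\subset S$; specializing to ${\bf w}={\bf w}_{Z}\in {\bf v}^\perp$ makes the last term vanish and identifies $\lambda_\cE({\bf w}_Z).C$ with $D_{\sigma,\cE}.C$.

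To prove the identity I would apply Grothendieck--Riemann--Roch twice. First, because the Donaldson construction depends only on the class $[\cE]\in K^0(S\times X)$, one may write $\lambda_\cE({\bf w})=\det \mathbf{R}p_*(q^*{\bf w}\otimes \cE)$, so that $c_1(\lambda_\cE({\bf w}))=[\ch \mathbf{R}p_*(q^*{\bf w}\otimes\cE)]_1$; GRR for $p\colon S\times X\to S$, whose relative Todd class is $q^*\td(X)$, rewrites this as a component of $p_*\bigl(q^*(\ch{\bf w}\cdot\td X)\cdot\ch\cE\bigr)$. Intersecting with $C$ and using the projection formula, $\lambda_\cE({\bf w}).C=\int_{C\times X} q_C^*(\ch{\bf w}\cdot\td X)\cdot\ch\cE_C$, where $\cE_C=\cE|_{C\times X}$ and $q_C\colon C\times X\to X$ is the projection. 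Pushing forward along $q_C$ and applying GRR a second time --- the relative Todd class now being $p_C^*\td(C)=1+\chi(\OO_C)\,p_C^*[\mathrm{pt}]$ --- gives $q_{C*}(\ch\cE_C)=\ch\,\mathbf{R}q_{C*}\cE_C-\chi(\OO_C)\,{\bf v}$, since $\cE_C$ restricted to a fibre $\{c\}\times X$ has Chern character ${\bf v}$. Finally $\mathbf{R}q_{C*}\cE_C=\Phi_\cE(\OO_C)$ by base change (using that $\cE$ is flat over $S$), and Hirzebruch--Riemann--Roch $\int_X \ch{\bf a}\cdot\ch{\bf b}\cdot\td X=({\bf a},{\bf b})$ turns the two remaining integrals into Euler pairings, yielding exactly the displayed identity. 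Alternatively, one can reduce to a smooth $C$ via its normalization (which changes no intersection numbers), and then the identity follows from $\deg\det\xi=\chi(\xi)-\rk(\xi)\chi(\OO_C)$ for $\xi\in K^0(C)$, the compatibility of $\lambda_\bullet$ with base change, $\chi(C,p_{C!}(-))=\chi(C\times X,-)$, and $\rk\,p_{C!}(q_C^*{\bf w}\cdot[\cE_C])=({\bf w},{\bf v})$.

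I expect the only real difficulty to be bookkeeping rather than conceptual: carrying the relative Todd classes correctly through the two GRR steps so that the correction term emerges as precisely $-\chi(\OO_C)({\bf w},{\bf v})$ (hence dies on ${\bf v}^\perp$), and checking that both the determinantal construction $\lambda_\bullet$ and the transform $\Phi_\bullet(\OO_C)$ commute with restricting the family $\cE$ to $C\times X$ --- this is where $S$-flatness of $\cE$ is essential and where passing to the normalization of a singular or non-reduced curve $C$ is a convenient technical device. Granting the identity, we obtain $\lambda_\cE({\bf w}_{Z}).C=D_{\sigma,\cE}.C$ for every curve class $C$, hence $\lambda_\cE({\bf w}_{Z})=D_{\sigma,\cE}$ in $N^1(S)$.
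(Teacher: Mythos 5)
Your argument is correct, and it is essentially the proof of the cited source [BM2, Proposition 4.4]; the survey itself gives no proof of this statement and simply defers to that reference. The key identity $\lambda_\cE({\bf w}).C = ({\bf w},[\Phi_\cE(\OO_C)]) - \chi(\OO_C)\,({\bf w},{\bf v})$ is exactly the right Grothendieck--Riemann--Roch/projection-formula computation, and you correctly observe that the correction term dies precisely because ${\bf w}_Z\in{\bf v}^\perp$ (which also makes the normalization step harmless, since $[\nu_*\OO_{\tilde C}]-[\OO_C]$ maps under $\Phi_\cE$ to a multiple of ${\bf v}$).
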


If $N$ is any line bundle on $S$, then we have $$D_{\sigma,\cE \te p^*N} = \lambda_{\cE \te p^*N}({\bf w}_Z) = \lambda_{\cE}({\bf w}_Z) = D_{\sigma,\cE}.$$ In particular, if $S$ is a moduli space $M_{\sigma}({\bf v})$ with a universal family $\cE$, then the divisor $D_{\sigma}:=D_{\sigma,\cE}$ does not depend on the choice of universal family.

\begin{remark}
See \cite[\S4]{BM2} for less restrictive hypotheses under which a divisor can be defined on the moduli space.
\end{remark}

In explicit cases, it can be useful to compute the character ${\bf w}_Z$ in more detail.  The next result does this in the case of an $(H,D)$-slice of divisors on a smooth surface $X$ (review \S \ref{ssec-surfaceConds}).

\begin{lemma}[{\cite[Proposition 3.8]{Boetal}}]
Let $X$ be a smooth surface and let $H,D\in \Pic(X)\te \R$, with $H$ ample.  If $\sigma$ is a stability condition in the $(H,D)$-slice with center $(s_W,0)$, then the character ${\bf w}_Z$ is a multiple of $$(-1,-\frac{1}{2}K_X+s_WH+D,m)\in {\bf v}^\perp,$$ where we write Chern characters as $(\ch_0,\ch_1,\ch_2)$.  Here the number $m$ is determined by the property that the character is in ${\bf v}^\perp$.
\end{lemma}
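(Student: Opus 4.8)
The plan is to extract $\mathbf{w}_Z$ directly from its defining property: it is the unique element of $\mathbf{v}^\perp$ with
$$\Im\!\left(-\frac{Z(\mathbf{w})}{Z(\mathbf{v})}\right) = (\mathbf{w}_Z,\mathbf{w}) \qquad \text{for all } \mathbf{w}\in K_{\num}(X)_\R,$$
where $Z=Z_{\beta,\alpha}$. Writing $Z(\mathbf{v}) = -d(\mathbf{v}) + i\,r(\mathbf{v})$ in the notation of Definition \ref{def-Bridgeland} (so $r=\Im Z$, $d=-\Re Z$), multiplying numerator and denominator by $\overline{Z(\mathbf{v})}$ gives
$$\Im\!\left(-\frac{Z(\mathbf{w})}{Z(\mathbf{v})}\right) = \frac{d(\mathbf{v})\,r(\mathbf{w}) - r(\mathbf{v})\,d(\mathbf{w})}{d(\mathbf{v})^2 + r(\mathbf{v})^2},$$
so up to a positive scalar $\mathbf{w}_Z$ represents the linear functional $\mathbf{w}\mapsto d(\mathbf{v})\,r(\mathbf{w}) - r(\mathbf{v})\,d(\mathbf{w})$.

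The next step is to represent the two functionals $r(-) = H\cdot\overline{\ch}_1^{D+\beta H}(-)$ and $d(-) = \overline{\ch}_2^{D+\beta H}(-) - \tfrac{\alpha^2 H^2}{2}\,\ch_0(-)$ by the Euler pairing. Each is a linear functional on $K_{\num}(X)_\R$, so by Hirzebruch--Riemann--Roch (the defining property of the Euler pairing, Example \ref{ex-Chern}) there are classes $\mathbf{a},\mathbf{b}$ with $r(-)=(\mathbf{a},-)$ and $d(-)=(\mathbf{b},-)$; since $r$ does not involve $\ch_2$ while $d$ contains the term $+\ch_2$, the coefficient of $\ch_2(\mathbf{w})$ forces $\mathbf{a}$ to have rank $0$ and $\mathbf{b}$ to have rank $1$. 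Consequently $\mathbf{w}_Z$ is a nonzero multiple of $d(\mathbf{v})\mathbf{a} - r(\mathbf{v})\mathbf{b}$, which has rank $-r(\mathbf{v})$; as long as $\sigma$ does not lie on the vertical wall of $\mathbf{v}$ we have $r(\mathbf{v}) = \Im Z(\mathbf{v}) \neq 0$, so after rescaling we may take $\ch_0(\mathbf{w}_Z) = -1$.

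It then remains to read off $\ch_1(\mathbf{w}_Z)$ and to match the accumulated $H$-, $D$- and $K_X$-terms with $-\tfrac12 K_X + s_W H + D$. Here one substitutes the equation of the wall $W$ through $\sigma$, namely $(\beta-s_W)^2 + \alpha^2 = \rho_W^2$ together with the expressions for $s_W$ and $\rho_W^2$ from \S\ref{ssec-walls}; this is what collapses the $\alpha$- and $\beta$-dependence of $d(\mathbf{v})/r(\mathbf{v})$ into the single parameter $s_W$. The last coordinate $m=\ch_2(\mathbf{w}_Z)$ is then forced: by construction $\mathbf{w}_Z\in\mathbf{v}^\perp$ (take $\mathbf{w}=\mathbf{v}$ in the first displayed identity and use $\Im(-1)=0$), so $m$ is the unique number with $\big((-1,\,-\tfrac12 K_X + s_W H + D,\,m),\ \mathbf{v}\big)=0$.

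I expect the only real obstacle to be bookkeeping: carrying the twisted Chern characters $\overline{\ch}^{D+\beta H} = \ch^{D+\beta H+\frac12 K_X}$ correctly through the Hirzebruch--Riemann--Roch identifications, where the Todd class contributes further $K_X$-terms, and then performing the wall substitution so that everything condenses to the stated closed form. There is no conceptual difficulty; an alternative that sidesteps normalizing and solving is to verify the claim directly, by computing $\big((-1,\,-\tfrac12 K_X + s_W H + D,\,m),\ \mathbf{w}\big)$ for arbitrary $\mathbf{w}$ (with $m$ as above) and checking that it agrees, up to a positive scalar, with $\Im(-Z(\mathbf{w})/Z(\mathbf{v}))$, which reduces the whole lemma to a single Chern-character identity.
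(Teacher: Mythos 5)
Your plan is sound, and there is nothing in the paper to compare it against: the survey states this lemma with only a citation to \cite[Proposition 3.8]{Boetal} and gives no proof. The argument you outline --- rewrite $\Im\bigl(-Z({\bf w})/Z({\bf v})\bigr)$ as $\bigl(d({\bf v})r({\bf w})-r({\bf v})d({\bf w})\bigr)/|Z({\bf v})|^2$, represent the functionals $r$ and $d$ by the Euler pairing via Hirzebruch--Riemann--Roch, read off $\ch_0({\bf w}_Z)=-r({\bf v})$ from the coefficient of $\ch_2({\bf w})$, and normalize using $r({\bf v})=\Im Z({\bf v})\neq 0$ away from the vertical wall --- is certainly the intended one, and the identity that collapses the $(\beta,\alpha)$-dependence is exactly the one you point to: expanding $(\beta-s_W)^2+\alpha^2=\rho_W^2$ with the formulas for $s_W$ and $\rho_W$ gives $d({\bf v})/r({\bf v})=s_W-\beta$ (with $d=-\Re Z$, $r=\Im Z$).

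The caveat is that the ``bookkeeping'' you defer is where all of the content of the statement sits, and it hides a convention trap. When you carry it out, the Todd class contributes $-\tfrac12 K_X\cdot\ch_0({\bf w}_Z)=+\tfrac12K_X$ to the $\ch_1({\bf w})$-coefficient of $({\bf w}_Z,{\bf w})$, while the twist in the central charge contributes $\delta=D+\beta H+\tfrac12K_X$ if one uses the normalization of \S\ref{ssec-surfaceConds} (central charge built from $\OV\ch^{D+\beta H}=\ch^{D+\beta H+\frac12K_X}$); these two $K_X$-terms cancel and one lands on $\ch_1({\bf w}_Z)=s_WH+D$ with $(s_W,0)$ the center measured in those coordinates, whereas the displayed form $-\tfrac12K_X+s_WH+D$ is what comes out when the central charge is built from $\ch^{D+\beta H}$ without the canonical twist (the convention of the cited source); the two answers differ precisely by replacing $s_W$ with $s_W+\tfrac{K_X.H}{2H^2}$. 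So when you execute the plan, fix one normalization of the slice, keep track of which one the center $(s_W,0)$ refers to, and sanity-check the result against a known case: for $\P^{2[n]}$ the Gieseker wall has center $s_W=-n-\tfrac12$ in the untwisted coordinates (equivalently $1-n$ in the twisted ones), and the resulting class must have $\ch_1/\ch_0=(n-1)H$ so that $\lambda({\bf w}_Z)$ lies on the ray of the extremal nef divisor $(n-1)H^{[n]}-\tfrac12B$ from Example \ref{ex-P2nNef}. Your proposed fallback of verifying the closed form directly by pairing $(-1,-\tfrac12K_X+s_WH+D,m)$ against an arbitrary ${\bf w}$ is a good way to catch exactly this kind of slip.
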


\subsection{Gieseker walls and nef cones} For the rest of the survey we let $X$ be a smooth surface and fix an $(H,D)$-slice $\Pi$ of stability conditions.  Let ${\bf v}\in K_{\num}({\bf v})$ be the Chern character of an $(H,D)$-semistable sheaf of positive rank.  Additionally assume for simplicity that $M_{H,D}({\bf v})$ has a universal family $\cE$, so that in particular every $(H,D)$-semistable sheaf is $(H,D)$-stable.  Recall that the \emph{Gieseker wall} $W$ for ${\bf v}$ in the $(H,D)$-slice is, by definition, the largest wall where an $(H,D)$-semistable sheaf of character ${\bf v}$ is destabilized.  For conditions $\sigma$ on or above $W$, every $(H,D)$-semistable sheaf is $\sigma$-semistable.  Therefore, for any such $\sigma$, the universal family $\cE$ is a family of $\sigma$-semistable objects parameterized by $M_{H,D}({\bf v})$.  Each condition $\sigma$ on or above the wall therefore gives a nef divisor $D_{\sigma}=D_{\sigma,\cE}$ on the moduli space.  

\begin{corollary}
With notation as above, if $s \leq s_W$, then the divisor on $M_{H,D}({\bf v})$ corresponding to the class $$(-1,-\frac{1}{2}K_X+sH+D,m)\in {\bf v}^\perp $$ under the Donaldson homomorphism is nef.
\end{corollary}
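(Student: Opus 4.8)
The plan is to realize the Donaldson class of $(-1,-\frac{1}{2}K_X+sH+D,m)$ as a nonnegative real multiple of a Bayer--Macr\`i divisor $D_{\sigma,\cE}$ attached to a well-chosen stability condition $\sigma$ in the $(H,D)$-slice, and then to invoke the positivity lemma (Theorem~\ref{thm-positivity}). The two nontrivial ingredients are the geometry of walls from \S\ref{ssec-walls}, which lets one pick $\sigma$ in the Gieseker chamber (or its closure) with the right ``center'', and the determinantal identification of $D_{\sigma,\cE}$.

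First I would produce the stability condition. Since ${\bf v}$ has positive rank and $\overline\Delta_{H,D}({\bf v})\geq 0$, the numerical walls for ${\bf v}$ lying left of the vertical wall $\beta=\overline\mu_{H,D}({\bf v})$ are nested semicircles centered on the $\beta$-axis, and through each center $(s,0)$ there passes exactly one, of radius $\rho$ with $\rho^{2}=(s-\overline\mu_{H,D}({\bf v}))^{2}-2\overline\Delta_{H,D}({\bf v})$; for $s<s_W$ this is positive, since $s<\overline\mu_{H,D}({\bf v})-\sqrt{2\overline\Delta_{H,D}({\bf v})}$. I would take $\sigma$ to be the apex $(s,\rho)$ of the semicircle with center $(s,0)$, so that the center of $\sigma$ in the sense of the formula for ${\bf w}_Z$ is precisely $(s,0)$. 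By the nesting of numerical walls recorded in \S\ref{ssec-walls} (the wall with smaller center contains those with larger center), this semicircle contains the Gieseker wall $W$ for ${\bf v}$, so $\sigma$ lies on or above $W$. Hence, by Theorem~\ref{thm-largeVol} and the discussion following it, every $(H,D)$-semistable sheaf of character ${\bf v}$ is $\sigma$-semistable; in particular the universal family $\cE$ on the projective moduli space $M_{H,D}({\bf v})$ is a flat family of $\sigma$-semistable objects of character ${\bf v}$, so the positivity lemma applies and $D_{\sigma}:=D_{\sigma,\cE}$ is nef. The boundary case $s=s_W$ follows either by taking $\sigma$ on $W$ itself, or simply because the nef cone is closed.

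Next I would match $D_\sigma$ with the class in the statement. By Proposition~4.4 of \cite{BM2} one has $D_{\sigma}=\lambda_{\cE}({\bf w}_{Z})$, and by the computation of ${\bf w}_{Z}$ in an $(H,D)$-slice (Proposition~3.8 of \cite{Boetal}) the vector ${\bf w}_{Z}$ is a scalar multiple of $(-1,-\frac{1}{2}K_X+sH+D,m)$, the relevant center being $(s,0)$ by the choice of $\sigma$. Since $\lambda_{\cE}$ is a homomorphism, the Donaldson class of $(-1,-\frac{1}{2}K_X+sH+D,m)$ is then a scalar multiple of the nef class $D_{\sigma}$, and it remains only to see that this scalar is positive. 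This is a short sign computation: it comes down to comparing the defining relation for ${\bf w}_{Z}$, namely $\Im\big(-Z(\cdot)/Z({\bf v})\big)=({\bf w}_Z,\cdot)$, with the conventions $r=\Im Z$, $d=-\Re Z$ of Definition~\ref{def-Bridgeland}, for instance by pairing ${\bf w}_Z$ against a fixed class of positive rank. Granting it, the class in the statement is nef.

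The step that calls for genuine care is this last sign check, together with the small amount of bookkeeping that the parameter $s$ appearing in the formula for ${\bf w}_{Z}$ is literally the center of the numerical wall I selected; everything else is a formal assembly of the positivity lemma, the determinantal description of $D_{\sigma,\cE}$, and the elementary wall geometry of \S\ref{ssec-walls}. If one simply reads Proposition~3.8 of \cite{Boetal} together with its implicit sign normalization, even this check is subsumed and the corollary is essentially immediate.
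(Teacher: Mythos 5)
Your proposal is correct and follows exactly the route the paper intends: choose a stability condition on the numerical wall with center $(s,0)$, use the nesting of walls to place it on or above the Gieseker wall so the universal family is $\sigma$-semistable and the positivity lemma applies, then identify $D_{\sigma,\cE}=\lambda_{\cE}({\bf w}_Z)$ with the stated class via the preceding lemma. The sign normalization you flag is glossed over in the paper as well (the lemma only asserts ``a multiple of''), so your treatment is, if anything, slightly more careful than the source.
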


Now let $\sigma$ be a stability condition on the Gieseker wall.  It is natural to wonder whether the ``final'' nef divisor $D_\sigma$ produced by this method is a boundary nef divisor.  This may or may not be the case.  By Theorem \ref{thm-positivity}, the divisor $D_\sigma$ is on the boundary of $\Nef(M_{H,D}({\bf v}))$ if and only if there is a curve in $M_{H,D}({\bf v})$ parameterizing sheaves which are generically $S$-equivalent with respect to the stability condition $\sigma$.  This happens if there is some sheaf $E\in M_{H,D}({\bf v})$ destabilized along $W$ by a sequence $$0\to F\to E\to G\to 0$$ where it is possible to vary the extension class in $\Ext^1(G,F)$ to obtain non-isomorphic objects $E'$.  This can be subtle, and typically requires further analysis.

\subsection{Nef cones of Hilbert schemes of points on surfaces}  In this section we survey the recent results of \cite{Boetal} computing nef divisors on the Hilbert scheme $X^{[n]}$ of points on a smooth surface of irregularity $q(X) = 0$.  Let ${\bf v} = \ch I_Z$, where $Z\in X^{[n]}$.  For each pair of divisors $(H,D)$ on $X$, we can interpret $X^{[n]}$ as the moduli space $M_{H,D}({\bf v})$.  A stability condition $\sigma$ in the $(H,D)$-slice on a wall $W$ with center $(s_W,0)$ induces a divisor $D_\sigma$ on $X^{[n]}$ with class a multiple of $$\frac{1}{2}K_X^{[n]} - s_WH^{[n]}-D^{[n]}-\frac{1}{2}B.$$  The ray spanned by this class tends to the ray spanned by $H^{[n]}$ as $s_W\to -\infty$. As $s_W$ varies in the above expression we obtain a two-dimensional cone of divisors in $N^1(X^{[n]})$ containing the ray spanned by the nef divisor $H^{[n]}$.  The positivity lemma allows us to study the nefness of divisors in this cone by studying the Gieseker wall for ${\bf v}$ in the $(H,D)$-slice.  Changing the twisting divisor $D$ changes which two-dimensional cone we look at, and the entire nef cone of $X^{[n]}$ can be studied by the systematic variation of the twisting divisor. 

The main result we discuss in this section addresses the computation of the Gieseker wall in an $(H,D)$-slice, at least assuming the number $n$ of points is sufficiently large.  We find that the Gieseker wall, or more precisely the subobject computing it, ``stabilizes'' once $n$ is sufficiently large.
 
\begin{theorem}[{\cite[various results from \S 3]{Boetal}}]\label{thm-hilbAsymptotic}
There is a curve $C\subset X$ (depending on $H,D$) such that if $n\gg 0$ then the Gieseker wall for ${\bf v}$ in the $(H,D)$-slice is computed by the rank 1 subobject $\OO_X(-C)$.  The intersection number $C.H$ is minimal among all effective curves $C$ on $X$.  The divisor $D_\sigma$ corresponding to a stability condition $\sigma$ on the Gieseker wall is an extremal nef divisor.  Orthogonal curves to $D_{\sigma}$ can be obtained by letting $n$ points move in a $g_n^1$ on $C$.
\end{theorem}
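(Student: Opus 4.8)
The plan is to first determine the Gieseker wall for $\bf v$ in the $(H,D)$-slice and show that, for $n\gg 0$, it is a rank one wall of a very specific shape, and then to feed the resulting boundary stability condition into the positivity lemma (Theorem \ref{thm-positivity}).

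First I would classify the possible destabilizing subobjects. As in the proof of Theorem \ref{thm-largeVol}, the long exact sequence of cohomology sheaves forces any subobject $F\subset I_Z$ in a category $\cA_\beta$ destabilizing $I_Z$ along a wall left of the vertical wall to be a \emph{sheaf}. If $\rk F\geq 2$, then the generalization of Lemma \ref{lem-higherRank} to arbitrary surfaces (see \cite{Boetal}) bounds the radius of the corresponding wall by $\rho_W^2\leq c_1 n + c_2$ with $c_1,c_2$ depending only on $X,H,D$. If $\rk F=1$, then since $F$ is torsion-free of rank one mapping to $I_Z$ with torsion cokernel, $F\subset I_Z\subset\OO_X$, so $F=I_Y(-C)$ for an effective (or zero) curve class $C$ and a zero-dimensional scheme $Y$ of length $\ell\geq 0$; only $C\neq 0$ gives a semicircular wall. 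Substituting $\overline\mu_{H,D}(I_Y(-C))=\overline\mu_{H,D}({\bf v})-\tfrac{C.H}{H^2}$, $\overline\Delta_{H,D}({\bf v})=\tfrac{n}{H^2}+O(1)$ and $\overline\Delta_{H,D}(I_Y(-C))=\tfrac{\ell}{H^2}+O(1)$ into the wall-center formula of \S\ref{ssec-walls} gives
$$s_W=\overline\mu_{H,D}({\bf v})-\frac{1}{2}\frac{C.H}{H^2}-\frac{n-\ell}{C.H}+O(1),\qquad \rho_W^2=\frac{(n-\ell)^2}{(C.H)^2}+O(n).$$
Since the largest wall is the one of smallest center, and since for $n\gg 0$ the term $-\tfrac{n}{C.H}$ dominates, the largest rank one wall is obtained with $\ell=0$ and $C.H$ \emph{minimal} among all effective curves on $X$ (minimality forces $C$ to be reduced and irreducible; when several classes attain the minimum, the lower-order terms, which involve $D$, select the relevant one). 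Its radius then grows like $\tfrac{n^2}{(C.H)^2}$, which for $n\gg 0$ exceeds the radius of any higher-rank wall. So I would conclude that the Gieseker wall, provided it is a wall at all, is $W:=W(\ch \OO_X(-C),{\bf v})$. To check it really is a wall I would take an irreducible $C'$ in class $C$ and a length $n$ subscheme $Z\subset C'$, and verify that the structure sequence $0\to\OO_X(-C)\to I_Z\to I_{Z/C'}\to 0$ is, all along the semicircle $W$, an exact sequence in $\cA_\beta$ of $\sigma$-semistable objects of the same slope (line bundles and rank one torsion sheaves on curves being semistable), so that $I_Z$ is destabilized precisely along $W$.

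Next I would invoke the positivity lemma. For $\sigma$ on $W$, every $(H,D)$-semistable sheaf of character $\bf v$, in particular every $I_Z$ with $Z\in X^{[n]}$, is $\sigma$-semistable, so the universal ideal sheaf $\cE$ on $X^{[n]}=M_{H,D}({\bf v})$ is a flat family of $\sigma$-semistable objects; Theorem \ref{thm-positivity} then produces a nef class $D_\sigma=D_{\sigma,\cE}$, a positive multiple of $\tfrac{1}{2}K_X^{[n]}-s_WH^{[n]}-D^{[n]}-\tfrac{1}{2}B$ by the divisor computation in \S\ref{sec-positivity}. Taking $n\gg 0$ so that $C$ carries a base-point-free $g_n^1$, I would fix such a pencil $\{Z_t\}_{t\in\P^1}$ of length $n$ divisors on $C'$; since the $Z_t$ are linearly equivalent on $C'$, the ideal sheaves $I_{Z_t}$ are all extensions of the one line bundle $\OO_{C'}(-Z_0)$ by $\OO_X(-C)$ and therefore share the same Jordan--H\"older factors with respect to $\sigma$, hence are pairwise $S$-equivalent. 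By Theorem \ref{thm-positivity}, the associated curve class $C_{[n]}\subset X^{[n]}$ satisfies $D_\sigma.C_{[n]}=0$, so $D_\sigma$ is nef and non-ample, on the boundary of $\Nef(X^{[n]})$. To upgrade this to extremality I would combine it with the observation that the $(H,D)$-slice sweeps out the two-dimensional cone spanned by $H^{[n]}$ and $D_\sigma$ inside $\Nef(X^{[n]})$, together with the fact that $C_{[n]}$ spans an extremal ray of the cone of curves.

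The hard part will be the uniform control of all destabilizing subobjects in the first step: one has to rule out, with estimates uniform in $n$ (and over the infinitely many effective classes $C$), both a higher-rank subobject, whose wall must be kept of radius $O(\sqrt n)$ while the candidate wall has radius of order $n$, and a rank one subobject twisted by a non-minimal effective class, which is beaten only by a margin of order $n$ so that the $O(1)$ error terms and the genus and singularities of $C$ must never reverse the comparison; and one must also verify that the candidate sequence stays an exact, semistable-to-semistable sequence in $\cA_\beta$ all along the semicircle $W$. A secondary difficulty is passing from ``$D_\sigma$ lies on the boundary of $\Nef(X^{[n]})$'' to ``$D_\sigma$ is extremal'' when $X$ has Picard rank greater than one, which requires either exhibiting a full complement of $S$-equivalence curves orthogonal to $D_\sigma$ or a direct analysis of the face it lies on.
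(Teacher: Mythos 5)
Your proposal is correct and follows essentially the same route as the paper's sketch: bound higher-rank walls by $\rho_W^2=O(n)$, show rank-one walls given by $\OO_X(-C)$ have radius growing linearly in $n$ and are maximized by taking $C.H$ minimal (with the $D$-dependent lower-order term breaking ties), verify the wall is actual via the sequence $0\to\OO_X(-C)\to I_Z\to I_{Z\subset C}\to 0$ for $Z\subset C$, and then apply the positivity lemma with orthogonal curves coming from a $g_n^1$ on $C$. The points you flag as delicate (uniformity over effective classes, exactness of the sequence along the whole wall) are exactly the ones the paper defers to the detailed treatment in \cite{Boetal}.
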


Note that everything here has already been verified for $X=\P^2$, and in fact $n\geq 2$ is sufficient in this case.  The destabilizing subobject is always $\OO_{\P^2}(-1)$.

\begin{proof}[Sketch proof]
Consider the character ${\bf v}$ as varying with $n$.  Then $\overline\mu_{H,D}({\bf v})$ is constant, and $\overline \Delta_{H,D}({\bf v})$ is of the form $n + {\mathrm {const}}$.  Consider the wall $W'$ given by a rank 1 object $I_Y(-C)$ with $C$ an effective curve, and put ${\bf w} = \ch I_Y(-C)$.  The wall $W'$ has center at $(s_{W'},0)$ with $$s_{W'} = \frac{\overline \mu_{H,D}({\bf v}) + \overline \mu_{H,D}({\bf w})}{2} - \frac{\overline \Delta_{H,D}({\bf v}) - \overline \Delta_{H,D}({\bf w})}{\overline \mu_{H,D}({\bf v}) - \overline \mu_{H,D}({\bf w})}.$$ As a function of $n$, this looks like \begin{equation}\label{centerW'} s_{W'} = -\frac{n}{\mu_H({\bf v})-\mu_H({\bf w})}+\mathrm{const} = \frac{n}{ \mu_H({\bf w})} + \mathrm{const} = -\frac{n}{C.H}+\mathrm{const},\end{equation} where the constant depends on ${\bf w}$.  Correspondingly, the radius $\rho_{W'}$ grows approximately linearly in $n$.

Note that the numerical wall given by $\OO_X(-C)$ is always at least as large as the numerical wall given by $I_Y(-C)$, by a discriminant calculation.  Furthermore, if $I_Y(-C)$ gives an actual wall, i.e. if there is some $I_Z\in X^{[n]}$ fitting in a sequence $$0\to I_Y(-C) \to I_Z \to T\to 0,$$ then $\OO_X(-C)$ also gives an actual wall.  Thus, if the Gieseker wall is computed by a rank $1$ sheaf then it is computed by a line bundle $\OO_X(-C)$.

In fact, for $n\gg 0$ the Gieseker wall is computed by a line bundle $\OO_X(-C)$ and not by some higher rank subobject.  This is because an analog of Lemma \ref{lem-higherRank} for arbitrary surfaces shows that any higher rank wall for ${\bf v}$ in the $(H,D)$-slice has radius \emph{squared} bounded by $n$ times a constant depending on $H,D$.  On the other hand, as soon as we know there is \emph{some} wall given by a rank $1$ subobject it follows that there are walls with radius which is linear in $n$, implying that the Gieseker wall is not a higher rank wall.

To see that there is some rank $1$ wall if $n \gg 0$, let $C$ be any effective curve.  For some $Z\in X^{[n]}$, there is an exact sequence of sheaves $$0\to \OO_X(-C) \to I_Z\to I_{Z\subset C}\to 0.$$ We know the numerical wall $W'$ corresponding to the subobject $\OO_X(-C)$ has radius which grows linearly with $n$.  In particular, for $n \gg 0$ the wall is nonempty.  Furthermore, since $\overline\mu_{H,D}(\OO_X(-C))$ and $\overline\mu_{H,D}(I_Z)$ are constant with $n$ but $\overline\Delta_{H,D}(I_Z)$ is unbounded with $n$, the sheaf $\OO_X(-C)$ is eventually in some of the categories along the wall $W'$.  Thus the above exact sequence of sheaves is an exact sequence along the wall, and this wall is larger than any higher rank wall.  We conclude that $I_Z$ is either destabilized along $W'$ or destabilized along some possibly larger rank $1$ wall.  Either way, there is a rank $1$ wall, and the Gieseker wall is a rank $1$ wall, computed by some line bundle $\OO_X(-C)$ with $C$ effective.

More precisely, the curve $C$ such that the subsheaf $\OO_X(-C)$ computes the Gieseker wall for $n\gg 0$ is the effective curve which gives the largest numerical (and hence actual) wall.  Considering the Formula (\ref{centerW'}) for the center of the wall determined by $\OO_X(-C)$, we find that $C$ must be an effective curve of minimal $H$-degree.  Furthermore, $C$ must be chosen to minimize the constant which appears in that formula (this depends additionally on $D$).  Any such curve $C$ which asymptotically minimizes Formula (\ref{centerW'}) in this way computes the Gieseker wall for $n\gg 0$.  Curves orthogonal to the divisor $D_\sigma$ given by a stability condition on the Gieseker wall can now be obtained by varying the extension class in the sequence $$0\to \OO_X(-C) \to I_Z\to I_{Z\subset C}\to 0;$$ this corresponds to letting $Z$ move in a pencil on $C$, which can certainly be done for $n\gg 0$.
\end{proof}

More care is taken in \cite{Boetal} to determine the precise bounds on $n$ which are necessary for the various steps of the proof.  The general method is applied to compute nef cones of Hilbert schemes of sufficiently many points on very general surfaces in $\P^3$, very general double covers of $\P^2$, and del Pezzo surfaces of degree $1$.  The last example provides an example of a surface of higher Picard rank, where the variation of the twisting divisor is exploited.  See \cite[\S 4-5]{Boetal} for details.  We highlight one of the first interesting cases where the answer appears to be unknown.

\begin{problem}
Let $X\subset \P^3$ be a very general quintic surface, so that the Picard rank is $1$ by the Noether-Lefschetz theorem.  Compute the nef cone of $X^{[2]}$ and $X^{[3]}$.
\end{problem}

Once $n\geq 4$ in the previous example, the nef cone is known by the general methods above.  See \cite[Proposition 4.5]{Boetal}.

\subsection{Nef cones of moduli spaces of sheaves on surfaces}
We close our discussion with a survey of the main result of \cite{CHNefGeneral} on the cone of nef divisors on a moduli space of sheaves with large discriminant on an arbitrary smooth surface.  In the case of $\P^2$, this result was first discovered in the papers \cite{CHAmple,LiZhao}.  The picture for an arbitrary surface is a modest simultaneous generalization of the $\P^2$ case as well as the Hilbert scheme case for an arbitrary surface (see \S 7.4 or \cite{Boetal}).  

 Again let $X$ be a smooth surface and let $H,D$ be divisors giving a slice of stability conditions.  Let ${\bf v}$ be the character of an $(H,D)$-semistable sheaf of positive rank.  We assume the discriminant $\overline\Delta_{H,D} ({\bf v})\gg 0$ is sufficiently large.  Suppose the moduli space $M_{H,D}({\bf v})$ carries a (quasi-)universal family.  The goal of \cite{CHNefGeneral} is to compute the Gieseker wall for ${\bf v}$ in the $(H,D)$-slice and to show that the divisor $D_\sigma$ corresponding to a stability condition $\sigma$ on the Gieseker wall is a boundary nef divisor.

The basic picture is similar to the case of a Hilbert scheme of points, and indeed Theorem \ref{thm-hilbAsymptotic} will follow as a special case of this more general result.  However, the asymptotics can easily be made much more explicit in the Hilbert scheme case.  The common thread between the two results is that as the discriminant $\overline\Delta_{H,D}({\bf v})$ is increased, the character ${\bf w}$ of a destabilizing subobject giving rise to the Gieseker wall stabilizes.  It is furthermore easy to give properties which almost uniquely define the character ${\bf w}$.

\begin{definition}
Fix an $(H,D)$-slice.  An \emph{extremal Chern character} ${\bf w}$ for ${\bf v}$ is any character satisfying the following defining properties.
\begin{enumerate}[label=(E\arabic*)]
\item \label{cond-rankBound}We have $0<r({\bf w})\leq r({\bf v})$, and if $r({\bf w})=r({\bf v})$, then $c_1({\bf v})-c_1({\bf w})$ is effective.
\item \label{cond-slopeClose} We have $\mu_H({\bf w}) < \mu_H({\bf v})$, and $\mu_H({\bf w})$ is as close to $\mu_H({\bf v})$ as possible subject to \ref{cond-rankBound}.
\item \label{cond-stable} The moduli space $M_{H,D}({\bf w})$ is nonempty.
\item \label{cond-discriminantMinimal} The discriminant $\overline\Delta_{H,D}({\bf w})$ is as small as possible, subject to \ref{cond-rankBound}-\ref{cond-stable}.
\item \label{cond-rankMaximal} The rank $r({\bf w})$ is as large as possible, subject to \ref{cond-rankBound}-\ref{cond-discriminantMinimal}.
\end{enumerate}
\end{definition}

Note that properties \ref{cond-rankBound}-\ref{cond-discriminantMinimal} uniquely determine the slope $\mu_H({\bf w})$ and discriminant $\overline\Delta_{H,D}({\bf w})$, although $c_1({\bf w})$ is not necessarily uniquely determined.  Condition \ref{cond-rankMaximal} uniquely specifies the rank of ${\bf w}$.  We then have the following theorem.  Furthermore, notice that the definition does not depend on the discriminant $\overline\Delta_{H,D}({\bf v})$, so that ${\bf w}$ can be held constant as $\overline \Delta_{H,D}({\bf v})$ varies.

\begin{theorem}[\cite{CHNefGeneral}]\label{thm-moduliAsymptotic}
Suppose $\overline\Delta_{H,D}({\bf v}) \gg 0$. Then the Gieseker wall for ${\bf v}$ in the $(H,D)$-slice is computed by a destabilizing subobject of character ${\bf w}$, where ${\bf w}$ is an extremal Chern character for ${\bf v}$.  Furthermore, the divisor $D_\sigma$ corresponding to a stability condition $\sigma$ on the Gieseker wall is a boundary nef divisor.
\end{theorem}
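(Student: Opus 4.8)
The plan is to reduce the computation of the Gieseker wall to the elementary geometry of numerical walls in the slice, exactly as in the proof of Theorem~\ref{thm-hilbAsymptotic}, and then to realize the optimal numerical wall by an explicit extension. For a destabilizing character ${\bf w}$ with $\overline\mu_{H,D}({\bf w})<\overline\mu_{H,D}({\bf v})$ the wall $W({\bf v},{\bf w})$ is the semicircle with center $(s_W,0)$, where
$$s_W=\frac{\overline\mu_{H,D}({\bf v})+\overline\mu_{H,D}({\bf w})}{2}-\frac{\overline\Delta_{H,D}({\bf v})-\overline\Delta_{H,D}({\bf w})}{\overline\mu_{H,D}({\bf v})-\overline\mu_{H,D}({\bf w})},$$
and among walls left of the vertical wall a wall is larger precisely when its center is more negative. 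Treating $\overline\Delta_{H,D}({\bf v})$ as the variable while ${\bf w}$ stays fixed, this reads
$$s_W=-\frac{\overline\Delta_{H,D}({\bf v})}{\overline\mu_{H,D}({\bf v})-\overline\mu_{H,D}({\bf w})}+\frac{\overline\mu_{H,D}({\bf v})+\overline\mu_{H,D}({\bf w})}{2}+\frac{\overline\Delta_{H,D}({\bf w})}{\overline\mu_{H,D}({\bf v})-\overline\mu_{H,D}({\bf w})},$$
so as $\overline\Delta_{H,D}({\bf v})\to\infty$ the wall grows, its center sliding to $-\infty$ at a rate controlled by $\bigl(\overline\mu_{H,D}({\bf v})-\overline\mu_{H,D}({\bf w})\bigr)^{-1}$ and, at the next order, by $\overline\Delta_{H,D}({\bf w})$. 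This is precisely the hierarchy encoded in the axioms for an extremal character: condition \ref{cond-slopeClose} extracts the leading term by making $\mu_H({\bf w})$ as close to $\mu_H({\bf v})$ from below as the discrete set of admissible slopes of bounded rank allows; conditions \ref{cond-stable} and \ref{cond-discriminantMinimal} extract the next term by making $\overline\Delta_{H,D}({\bf w})$ minimal subject to $M_{H,D}({\bf w})\neq\emptyset$; and \ref{cond-rankMaximal} merely resolves the remaining ambiguity among characters defining the very same numerical wall.

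The next step is to see that no destabilizing subobject violating \ref{cond-rankBound} — in particular none of rank exceeding $r({\bf v})$ — can produce a wall this large. As in the proof of Theorem~\ref{thm-largeVol}, a subobject $F$ of a sheaf $E$ in some $\cA_\beta$ is forced to be a genuine sheaf, and from the four-term sequence of cohomology sheaves $0\to K\to F\to E\to C\to 0$ with $K\in\cF_\beta$ and $C\in\cT_\beta$ along the wall one gets a slope pinch $2\rho_W\leq \overline\mu_{H,D}(F)-\overline\mu_{H,D}(K)$; combined with a Bogomolov-type inequality for $F$ and $K$ this is exactly the arbitrary-surface generalization of Lemma~\ref{lem-higherRank}, giving $\rho_W^{2}\leq c\cdot\overline\Delta_{H,D}({\bf v})$ with $c$ depending only on $H,D$. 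Since $\rho_W^{2}=(s_W-\overline\mu_{H,D}({\bf v}))^{2}-2\overline\Delta_{H,D}({\bf v})$, such walls have center bounded below by $-O\bigl(\sqrt{\overline\Delta_{H,D}({\bf v})}\,\bigr)$, whereas the extremal-character wall has center of size $-\Theta\bigl(\overline\Delta_{H,D}({\bf v})\bigr)$; hence for $\overline\Delta_{H,D}({\bf v})\gg 0$ the latter dominates, and among the rank-$\leq r({\bf v})$ destabilizers the center formula above directly crowns an extremal character.

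It remains to show the extremal numerical wall is actually attained. I would pick an $(H,D)$-stable sheaf $F$ of character ${\bf w}$ (available by \ref{cond-stable}) and a sheaf $Q$ of character ${\bf v}-{\bf w}$ with $\overline\Delta_{H,D}(Q)$ as large as needed — for instance by performing many elementary modifications, which is harmless since $\overline\Delta_{H,D}({\bf v})\gg 0$. A general extension $0\to F\to E\to Q\to 0$ is then $(H,D)$-semistable of character ${\bf v}$ by the argument of Example~\ref{ex-existence}, so $E\in M_{H,D}({\bf v})$. Because $\overline\mu_{H,D}(F)$ and $\overline\mu_{H,D}(Q)$ do not move with $\overline\Delta_{H,D}({\bf v})$ while the wall $W({\bf v},{\bf w})$ slides far to the left, for $\overline\Delta_{H,D}({\bf v})\gg 0$ the sheaves $F$ and $Q$ lie in $\cT_\beta$ and are $\sigma_{\beta,\alpha}$-semistable along the wall (their own Gieseker walls depend only on ${\bf w}$ and ${\bf v}-{\bf w}$, hence sit below $W({\bf v},{\bf w})$); by Exercise~\ref{ex-exact} the sheaf sequence is exact in $\cA_\beta$ along the wall, so $E$ is an extension of $\sigma$-semistable objects of equal $\sigma$-slope and is destabilized precisely along $W({\bf v},{\bf w})$. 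By the maximality established above, this is the Gieseker wall, which proves the first assertion.

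Finally, for the boundary statement I would invoke the positivity lemma, Theorem~\ref{thm-positivity}: $D_\sigma$ is nef for $\sigma$ on the Gieseker wall, and it lies on $\partial\Nef(M_{H,D}({\bf v}))$ as soon as there is a complete integral curve in $M_{H,D}({\bf v})$ whose two general members are $S$-equivalent with respect to $\sigma$. Such a curve comes from varying the extension class in $\PP\bigl(\Ext^1(Q,F)\bigr)$: an Euler-characteristic computation shows $\ext^1(Q,F)\to\infty$ as $\overline\Delta_{H,D}({\bf v})\to\infty$, since $-\chi(Q,F)$ grows with the discriminant while $\Hom(Q,F)$ and $\Ext^2(Q,F)\cong\Hom(F,Q\te K_X)^{*}$ remain of bounded dimension by stability; general extensions are non-isomorphic, and all of them have the same Jordan-H\"older factors with respect to $\sigma$ — those of $F$ together with those of $Q$ — so a general $\PP^1$ maps to a rational curve in $M_{H,D}({\bf v})$ contracted by $D_\sigma$. (For $X^{[n]}$ one has $F=\OO_X(-C)$ and $Q=I_{Z\subset C}$, and this curve is the $g^1_n$ of Theorem~\ref{thm-hilbAsymptotic}.) I expect the genuine obstacle — and the place where the real work of \cite{CHNefGeneral} lies — to be not the wall bookkeeping but making all of this effective: pinning down the threshold ``$\overline\Delta_{H,D}({\bf v})\gg 0$'', verifying $(H,D)$-semistability of the generic extension together with the $\sigma$-semistability of $F$ and $Q$ on the wall in that range, and ensuring $\ext^1(Q,F)\geq 2$ with non-isomorphic generic extensions so that the contracting curve genuinely exists.
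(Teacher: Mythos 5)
Your strategy coincides with the one the paper sketches for Theorem~\ref{thm-moduliAsymptotic}: expand the center $s_W$ of $W({\bf v},{\bf w})$ in $\overline\Delta_{H,D}({\bf v})$ to see that conditions \ref{cond-rankBound}--\ref{cond-rankMaximal} pick out the largest candidate numerical wall, exclude higher-rank destabilizers because their radius squared is only $O(\overline\Delta_{H,D}({\bf v}))$ while the extremal wall has radius $\Theta(\overline\Delta_{H,D}({\bf v}))$, realize the wall by an extension, and obtain extremality of $D_\sigma$ from Theorem~\ref{thm-positivity} by varying the extension class in $\Ext^1(Q,F)$. Those parts, and your honest flagging of where the effective work lies, match \cite{CHNefGeneral}.

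The genuine gap is in the step showing that the extremal numerical wall is an actual wall, specifically in your treatment of the quotient. You cannot choose ``a sheaf $Q$ of character ${\bf v}-{\bf w}$ with $\overline\Delta_{H,D}(Q)$ as large as needed'': the character ${\bf u}={\bf v}-{\bf w}$ is forced, and its discriminant is whatever it is --- in fact it tends to infinity with $\overline\Delta_{H,D}({\bf v})$. For the same reason your parenthetical justification that the Gieseker walls of $F$ and $Q$ ``depend only on ${\bf w}$ and ${\bf v}-{\bf w}$, hence sit below $W({\bf v},{\bf w})$'' fails for $Q$: the Gieseker wall of ${\bf u}$ is not a fixed semicircle but also slides to the left at a rate linear in $\overline\Delta_{H,D}({\bf u})\sim\overline\Delta_{H,D}({\bf v})$, so whether it stays nested inside $W({\bf v},{\bf w})$ is a genuine quantitative comparison, not a soft limiting statement. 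This is precisely where the paper inducts on the rank: since $r({\bf u})<r({\bf v})$, the Gieseker wall of ${\bf u}$ is itself computed by an extremal character for ${\bf u}$, and one then verifies that both this wall and the (fixed) Gieseker wall of ${\bf w}$ are nested inside $W({\bf v},{\bf w})$ once $\overline\Delta_{H,D}({\bf v})\gg 0$. Without that step you have no guarantee that $Q$, hence a general extension $E$, is $\sigma$-semistable along $W({\bf v},{\bf w})$, and the wall may fail to be realized. Relatedly, invoking Example~\ref{ex-existence} for the $(H,D)$-semistability of a general extension is not an argument --- that example concerns cokernels of maps of bundles and Shatz strata --- and the paper treats the $(H,D)$-stability of the general extension as a separate technical point rather than a formality.
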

 
The argument is largely similar to the proof of Theorem \ref{thm-hilbAsymptotic}.  First one shows that the destabilizing subobject along the Gieseker wall must actually be a subsheaf, and not some higher rank object.  This justifies restriction \ref{cond-rankBound} in the definition of ${\bf w}$ (note that if $r({\bf w}) = r({\bf v})$ then the only way there can be an injection of sheaves $F\to E$ with $\ch F = {\bf w}$ and $\ch E = {\bf v}$ is if the induced map $\det F\to \det E$ is injective, forcing $c_1({\bf v}) - c_1({\bf w})$ to be effective.

Next, one shows that the subsheaf defining the Gieseker wall must actually be an $(H,D)$-semistable sheaf.  Recalling the formula $$s_W = \frac{\overline\mu_{H,D}({\bf v})+\overline\mu_{H,D}({\bf w})}{2} - \frac{\overline\Delta_{H,D}({\bf v})-\overline\Delta_{H,D}({\bf w})}{\overline \mu_{H,D}({\bf v}) - \overline\mu_{H,D}({\bf w})}$$ for the center of a wall, conditions \ref{cond-slopeClose}-\ref{cond-discriminantMinimal} then ensure that the numerical wall defined by ${\bf w}$ is as large as possible when $\OV \Delta_{H,D}({\bf v})\gg 0$.  Therefore, the Gieseker wall for ${\bf v}$ is no larger than the wall defined by the extremal character ${\bf w}$.
  
\begin{remark}
Actually computing the extremal character ${\bf w}$ can be extremely challenging.  Minimizing the discriminant of ${\bf w}$ subject to the condition that the moduli space $M_{H,D}({\bf w})$ is nonempty essentially requires knowing the sharpest possible Bogomolov inequalities for semistable sheaves on $X$.  Conversely, if the nef cones of moduli spaces of sheaves on $X$ are known, strong Bogomolov-type inequalities can be deduced.  On surfaces such as $\P^2$ and $K3$ surfaces, the extremal character can be computed mechanically using the classification of semistable sheaves; recall for example \S\ref{sssec-existP2} and \S\ref{sssec-K3exist}.  
\end{remark}  

The proof of Theorem \ref{thm-moduliAsymptotic} diverges from the Hilbert scheme case when we need to show that the numerical wall for ${\bf v}$ defined by an extremal character ${\bf w}$ is an actual wall.  In the Hilbert scheme case, it is trivial to produce ideal sheaves $I_Z$ which are destabilized by a rank $1$ object $\OO_X(-C)$: we simply put $Z$ on $C$, and get an exact sequence $$0\to \OO_X(-C)\to I_Z\to I_{Z\subset C}\to 0$$ which is an exact sequence in the categories along the wall if the number of points is sufficiently large.  

To prove Theorem \ref{thm-moduliAsymptotic}, we instead need to produce $(H,D)$-semistable sheaves $E$ of character ${\bf v}$ fitting in sequences of the form $$0\to F \to E \to G\to 0$$ where $F$ is $(H,D)$-semistable of character ${\bf w}$.  This is somewhat technical.  Let ${\bf u} = \ch G$; then ${\bf u}$ has $r({\bf u})<r({\bf v})$, and $\overline \Delta_{H,D}({\bf u}) \gg 0$.  Therefore, by induction on the rank, we may assume the Gieseker wall of ${\bf u}$ has been computed.  We then show that the Gieseker walls for ${\bf w}$ and ${\bf u}$ are nested inside $W:=W({\bf v},{\bf w})$ if $\overline \Delta_{H,D}({\bf v})\gg 0$.  Therefore, any sheaves $F\in M_{H,D}({\bf w})$ and $G\in M_{H,D}({\bf u})$ are actually $\sigma$-semistable for any stability condition $\sigma$ on $W$.  Then any extension $E$ of $G$ by $F$ is $\sigma$-semistable, and it can further be shown that a general such extension is actually $(H,D)$-stable.  By varying the extension class, we can produce curves in $M_{H,D}({\bf v})$ parameterizing non-isomorphic $(H,D)$-stable sheaves; these curves are orthogonal to the nef divisor given by the Gieseker wall.  See \cite[\S 5-6]{CHNefGeneral} for details.

\begin{remark}
Several applications of Theorem \ref{thm-moduliAsymptotic} to simple surfaces are given in \cite[\S 7]{CHNefGeneral}.
\end{remark}

\bibliographystyle{plain}

\begin{thebibliography}{100000000}



\bibitem[AB13]{AB}
D. Arcara, A. Bertram. Bridgeland-stable moduli spaces for $K$-trivial surfaces, with an appendix by Max Lieblich, J. Eur. Math. Soc., {\bf 15} (2013), 1--38. 

\bibitem[ABCH13]{ABCH}
D. Arcara, A. Bertram, I. Coskun, and J. Huizenga.
\newblock The minimal model program for the Hilbert scheme of points on $\mathbb{P}^2$ and Bridgeland stability.  Adv. Math., {\bf 235} (2013), 580--626. 

\bibitem[AM16]{ArcaraMiles} D. Arcara, E. Miles, Projectivity of Bridgeland moduli spaces on del Pezzo surfaces of Picard rank 2, Int. Math. Res. Not., to appear. (2016)



\bibitem[BM14a]{BM2}
A.~Bayer and E.~Macr\`{i}. Projectivity and birational geometry of Bridgeland
moduli spaces, J. Amer. Math. Soc., {\bf 27} (2014),  707--752.

\bibitem[BM14b]{BM3}
A. Bayer\ and\ E. Macr\`\i, MMP for moduli of sheaves on K3s via wall-crossing: nef and movable cones, Lagrangian fibrations, Invent. Math. {\bf 198} (2014), no.~3, 505--590.

\bibitem[BC13]{BC}
A.~Bertram and I.~Coskun, The birational geometry of the Hilbert scheme of points on surfaces, in {\it Birational geometry, rational curves, and arithmetic},  Simons Symposia, Springer, New York, 2013, 15--55. 



\bibitem[Bol$^+$16]{Boetal} B. Bolognese, J. Huizenga, Y. Lin, E. Riedl, B. Schmidt, M. Woolf, and X. Zhao, Nef cones of Hilbert schemes of points on surfaces, Algebra Number Theory, to appear. (2016)

\bibitem[BDPP13]{BDPP}
S. Boucksom, J.-P. Demailly, M. P\u aun, T. Peternell, The pseudo-effective cone of a compact K\"ahler manifold and varieties of negative Kodaira dimension, J. Algebraic Geom. {\bf 22} (2013), no.~2, 201--248.


\bibitem[Bri07]{bridgeland:stable}
T. Bridgeland, Stability conditions on triangulated categories, Ann. of Math. (2) {\bf 166} (2007), no.~2, 317--345.

\bibitem[Bri08]{bridgelandK3}
T. Bridgeland, Stability conditions on $K3$ surfaces, Duke Math. J. {\bf 141} (2008), no.~2, 241--291.


\bibitem[CH14a]{CH}
I.~Coskun and J.~Huizenga, Interpolation, Bridgeland stability and monomial schemes in the plane, J. Math. Pures Appl., {\bf 102} (2014), 930-971.

\bibitem[CH14b]{CHGokova} I. Coskun\ and\ J. Huizenga, The birational geometry of the moduli spaces of sheaves on $\Bbb P\sp 2$, in {\it Proceedings of the G\"okova Geometry-Topology Conference 2014}, 114--155, G\"okova Geometry/Topology Conference (GGT), G\"okova.

\bibitem[CH16a]{CHAmple}
I. Coskun\ and\ J. Huizenga, The ample cone of moduli spaces of sheaves on the plane, Algebr. Geom. {\bf 3} (2016), no.~1, 106--136.




\bibitem[CH16b]{CHNefGeneral}
I. Coskun and J. Huizenga, The nef cone of the moduli space of sheaves and strong Bogomolov inequalities, preprint. (2016)

\bibitem[CHW16]{CHW}
I. Coskun, J. Huizenga and M. Woolf, The effective cone of the moduli space of sheaves on the plane,  J. Eur. Math. Soc., to appear. (2016)

\bibitem[Deb01]{Debarre} O. Debarre, {\it Higher-dimensional algebraic geometry}, Universitext, Springer, New York, 2001.


\bibitem[Dre88]{DrezetPicard} 
J.-M. Drezet, Groupe de Picard des vari\'et\'es de modules de faisceaux semi-stables sur ${\bf P}\sb 2({\bf C})$, Ann. Inst. Fourier (Grenoble) {\bf 38} (1988), no.~3, 105--168.

\bibitem[DLP85]{DLP}
J.-M. Drezet\ and\ J. Le Potier, Fibr\'es stables et fibr\'es exceptionnels sur ${\bf P}\sb 2$, Ann. Sci. \'Ecole Norm. Sup. (4) {\bf 18} (1985), no.~2, 193--243.
 
\bibitem[Fog68]{Fogarty1}
J. Fogarty, Algebraic families on an algebraic surface, Amer. J. Math {\bf 90} (1968), 511--521.

\bibitem[Fog73]{Fogarty2}
J. Fogarty, Algebraic families on an algebraic surface. II. The Picard scheme of the punctual Hilbert scheme, Amer. J. Math. {\bf 95} (1973), 660--687. 

\bibitem[GM03]{GelfandManin} S. I. Gelfand\ and\ Y. I. Manin, {\it Methods of homological algebra}, second edition, Springer Monographs in Mathematics, Springer, Berlin, 2003. 

\bibitem[Har74]{Hartshorne}R. Hartshorne, Varieties of small codimension in projective space, Bull. Amer. Math. Soc. {\bf 80} (1974), 1017--1032.

\bibitem[HH13]{HassettHyeon}
B. Hassett\ and\ D. Hyeon, Log minimal model program for the moduli space of stable curves: the first flip, Ann. of Math. (2) {\bf 177} (2013), no.~3, 911--968.



\bibitem[HiL93]{HirschowitzLaszlo}
A. Hirschowitz\ and\ Y. Laszlo, Fibr\'es g\'en\'eriques sur le plan projectif, Math. Ann. {\bf 297} (1993), no.~1, 85--102.


\bibitem[Hui12]{HuizengaThesis}
J.~Huizenga,
\newblock Restrictions of Steiner bundles and divisors on the Hilbert scheme of points in the plane,
\newblock Harvard University PhD thesis, 2012.

\bibitem[Hui16]{HuizengaJAG}
J. Huizenga, Effective divisors on the Hilbert scheme of points in the plane and interpolation for stable bundles, J. Algebraic Geom. {\bf 25} (2016), no.~1, 19--75.


\bibitem[Huy06]{Huybrechts} D. Huybrechts, {\it Fourier-Mukai transforms in algebraic geometry}, Oxford Mathematical Monographs, Oxford Univ. Press, Oxford, 2006.

\bibitem[HL10]{HuybrechtsLehn} D. Huybrechts\ and\ M. Lehn, {\it The geometry of moduli spaces of sheaves}, second edition, Cambridge Mathematical Library, Cambridge Univ. Press, Cambridge, 2010.


\bibitem[Laz04]{Lazarsfeld}
R. Lazarsfeld, {\it Positivity in algebraic geometry. I}, Ergebnisse der Mathematik und ihrer Grenzgebiete. 3. Folge. A Series of Modern Surveys in Mathematics, 48, Springer, Berlin, 2004.

\bibitem[LeP97]{LePotierLectures}
J. Le Potier, {\it Lectures on vector bundles}, translated by A. Maciocia, Cambridge Studies in Advanced Mathematics, 54, Cambridge Univ. Press, Cambridge, 1997.

\bibitem[Li93]{JunLi}
J. Li, Algebraic geometric interpretation of Donaldson's polynomial invariants, J. Differential Geom. {\bf 37} (1993), no.~2, 417--466.

\bibitem[Li94]{JunLiPicard} J. Li, Picard groups of the moduli spaces of vector bundles over algebraic surfaces, in {\it Moduli of vector bundles (Sanda, 1994; Kyoto, 1994)}, 129--146, Lecture Notes in Pure and Appl. Math., 179, Dekker, New York. 


\bibitem[LZ16]{LiZhao}
C. Li and X. Zhao, Birational models of moduli spaces of coherent sheaves on the projective plane, preprint. (2016)

\bibitem[Mac14]{Maciocia}
A. Maciocia, Computing the walls associated to Bridgeland stability conditions on projective surfaces, Asian J. Math., {\bf 18} no. 2 (2014), 263--279.


\bibitem[MW97]{MatsukiWentworth}
Matsuki, K.; Wentworth, R.: Mumford-Thaddeus principle on the moduli space of vector bundles on an algebraic surface. Internat. J. Math. 8 (1997), no. 1, 97--148.

\bibitem[MYY14]{MYY}
H. Minamide, S. Yanagida\ and\ K. Yoshioka, Some moduli spaces of Bridgeland's stability conditions, Int. Math. Res. Not. IMRN {\bf 2014}, no.~19, 5264--5327.

\bibitem[Nue14]{Nuer}
H. Nuer, Projectivity and birational geometry of moduli spaces of Bridgeland stable objects on an Enriques surface, preprint.

\bibitem[O'G96]{OGrady} K. G. O'Grady, Moduli of vector bundles on projective surfaces: some basic results, Invent. Math. {\bf 123} (1996), no.~1, 141--207.


\bibitem[Rud89]{Rudakov1} A. N. Rudakov, Izv. Akad. Nauk SSSR Ser. Mat. {\bf 52} (1988), no. 4, 788--812, 896; translation in Math. USSR-Izv. {\bf 33} (1989), no.~1, 115--138.

\bibitem[Rud94]{Rudakov2} A. N. Rudakov, A description of Chern classes of semistable sheaves on a quadric surface, J. Reine Angew. Math. {\bf 453} (1994), 113--135. 

\bibitem[Rya16]{Ryan}
T.~Ryan,
\newblock The effective cone of moduli spaces of sheaves on $\P^1\times \P^1$,
\newblock University of Illinois-Chicago PhD thesis, 2016.

\bibitem[Wal93]{Walter} C. Walter, Irreducibility of moduli spaces of vector bundles on birationally ruled surfaces, in {\it Algebraic geometry (Catania, 1993/Barcelona, 1994)}, 201--211, Lecture Notes in Pure and Appl. Math., 200, Dekker, New York. 

\bibitem[Yos96]{YoshiokaRuled} K. Yoshioka, The Picard group of the moduli space of stable sheaves on a ruled surface, J. Math. Kyoto Univ. {\bf 36} (1996), no.~2, 279--309. 

\bibitem[Yos01]{Yoshioka3}
K. Yoshioka, Moduli spaces of stable sheaves on abelian surfaces, Math. Ann. {\bf 321} (2001), no.~4, 817--884.


\end{thebibliography}

\end{document}